\documentclass[12pt,reqno]{amsart}
\usepackage{amsthm, amscd, amsfonts, amssymb, graphicx, color}
\usepackage[bookmarksnumbered, colorlinks, plainpages,linkcolor=green,anchorcolor=blue,citecolor=blue,urlcolor=blue]{hyperref}
\usepackage{enumerate}
\usepackage{exscale}
\usepackage[all]{xy}
\usepackage{relsize}
\usepackage{pdfsync}
\numberwithin{equation}{section}

\makeatletter
\@namedef{subjclassname@2020}{%
  \textup{2020} Mathematics Subject Classification}
\makeatother

\usepackage[includemp,body={398pt,550pt},footskip=30pt,%
            marginparwidth=60pt,marginparsep=10pt]{geometry}

\newtheorem{theorem}{Theorem}[section]
\newtheorem{lemma}[theorem]{Lemma}
\newtheorem{corollary}[theorem]{Corollary}

\newcommand{\esssup}{\operatorname{ess\,sup}}

\newcommand{\D}{\mathbb{D}}
\newcommand{\DD}{\widehat{\mathcal{D}}}

\newcommand{\DDD}{\mathcal{D}}
\newcommand{\N}{\mathbb{N}}

\newcommand{\C}{\mathbb{C}}

\renewcommand{\phi}{\varphi}

\newcommand{\T}{\mathbb{T}}

\begin{document}
\title[Embedding theorems and integration operators]{Embedding theorems and integration operators on Hardy--Carleson type tent spaces induced by doubling weights}
\date{}

\author[J. Chen]{Jiale Chen}
\thanks{The first author is supported by National Natural Science Foundation of China (No. 12501170). The second author is supported by National Natural Science Foundation of China (No. 12401091).}
\address{Jiale Chen, School of Mathematics and Statistics, Shaanxi Normal University, Xi'an 710119, China.}
\email{jialechen@snnu.edu.cn}

\author[B. Liu]{Bin Liu\textsuperscript{*}}
\thanks{\textsuperscript{*}Corresponding author.}
\address{Bin Liu, School of Mathematical Sciences, Chengdu University of Technology, Sichuan, 610059, China.}
\email{liubin\_taixi@163.com}

\subjclass[2020]{30H99, 42B35, 47G10}
\keywords{Analytic tent space, radial weight, Carleson measure, embedding theorem, Littlewood--Paley formula}

\begin{abstract}
  \noindent This paper develops the function and operator theory of Hardy--Carleson--type analytic tent spaces $AT_q^\infty(\omega)$ induced by radial weights $\omega$ satisfying a two-sided doubling condition. We first characterize the positive Borel measures $\mu$ for which the embedding from $AT_p^\infty(\omega)$ into the tent space $T_q^\infty(\mu)$ is bounded for all $0 < p, q < \infty$. A Littlewood--Paley formula for $AT_q^\infty(\omega)$ is then established. Using these results, we give a complete characterization of the boundedness (compactness) of Volterra-type integration operators between $AT_p^\infty(\omega)$ and $AT_q^\infty(\omega)$.
\end{abstract}
\maketitle

\section{Introduction}
\allowdisplaybreaks[4]

Let $\D$ be the open unit disk of the complex plane $\C$ and $\T:=\partial\D$ be the unit circle. Given $0<p,q<\infty$ and a positive Borel measure $\mu$ on $\D$, the tent space $T_q^p(\mu)$ consists of $\mu$-measurable functions $f$ on $\D$ with
$$\|f\|_{T^p_q(\mu)}:=\left(\int_{\T}\left(\int_{\Gamma(\xi)}|f(z)|^q\frac{d\mu(z)}{1-|z|}\right)^{\frac{p}{q}}|d\xi|\right)^{\frac{1}{p}}<\infty,$$
where $\Gamma(\xi)$ is the Kor\'anyi (non-tangential approach) region with vertex at $\xi\in\T$ defined by
$$\Gamma(\xi): = \{ z \in \D : |1 - \bar{\xi}z| < 1 - |z|^2 \}.$$
In the case $p=\infty$, the tent space $T^{\infty}_q(\mu)$ consists of $\mu$-measurable functions $f$ satisfying
$$\|f\|_{T^{\infty}_q(\mu)}:=\mathrm{ess}\sup_{\xi\in\T}\left(\sup_{a\in\Gamma(\xi)}\frac{1}{1-|a|}\int_{S(a)}|f(z)|^qd\mu(z)\right)^{\frac{1}{q}}<\infty,$$
where $S(a)$ is the Carleson square generated by $a$, defined by $S(0):=\D$ and
$$S(re^{i\theta}):=\left\{\rho e^{it}:r\leq \rho<1, |t-\theta|\leq\frac{1-r}{2}\right\}$$
for $a=re^{i\theta}\neq0$. We also use $T^0_q(\mu)$ to denote the subspace of $T^{\infty}_q(\mu)$ consisting of $f$ with
$$\lim_{|a|\to1^-}\frac{1}{1-|a|}\int_{S(a)}|f(z)|^qd\mu(z)=0.$$

A function $\omega : \mathbb{D} \to [0, \infty)$ that is integrable over $\mathbb{D}$ is called a weight. It is said to be radial if $\omega(z) = \omega(|z|)$ for all $z \in \mathbb{D}$. When a measure $\mu$ is induced by a radial weight $\omega$ via  
\begin{equation*} 
d\mu = \omega dA,
\end{equation*}  
we write $T_q^p(\mu) := T_q^p(\omega),\; T_q^\infty(\mu) := T_q^\infty(\omega)$, and $T_q^0(\mu) := T_q^0(\omega)$. Here and throughout, $dA$ denotes the Lebesgue area measure on $\mathbb{D}$. In particular, for the standard weight $\omega(z) = (1 - |z|)^\alpha$ with some $\alpha \in \mathbb{R}$, the corresponding tent spaces are denoted by $T_{q,\alpha}^p,\; T_{q,\alpha}^\infty$, and $T_{q,\alpha}^0$. The main focus of this paper is the function and operator theory on analytic tent spaces. Let $\mathcal{H}(\mathbb{D})$ denote the space of analytic functions on $\mathbb{D}$. The Hardy-type analytic tent space $AT_q^p(\omega)$ is defined as the intersection of $T_q^p(\omega)$ with $\mathcal{H}(\mathbb{D})$, endowed with the inherited (quasi)-norm. Similarly, the Hardy--Carleson-type analytic tent spaces $AT_q^\infty(\omega)$ and $AT_q^0(\omega)$ are defined as the intersections of $T_q^\infty(\omega)$ and $T_q^0(\omega)$, respectively, with $\mathcal{H}(\mathbb{D})$, again equipped with the inherited (quasi)-norm.

For a radial weight $\omega $, write $\widehat{\omega} (z)=\int_{\left | z \right | }^{1} \omega (s)ds$ for all $z \in \mathbb{D} $. A radial weight $\omega $ belongs to the class $\widehat{\mathcal{D} } $ if there exists a constant $C=C(\omega)\ge 1$ such that
$$\widehat{\omega} (r)\le C\widehat{\omega} \left(\frac{1+r}{2}\right)$$
for all $0\le r< 1  $. Moreover, a radial weight $\omega $ belongs to the class $\check{\mathcal{D} } $ if there exist  $K=K(\omega)> 1  $ and $C=C(\omega)> 1$ such that 
\begin{equation}\label{check}
\widehat{\omega} (r) \geq C\widehat{\omega} \left(1-\frac{1-r}{K} \right)
\end{equation}
for all $0\le r< 1  $. The intersection $\check{\mathcal{D} } \cap \widehat{\mathcal{D} } $ is denoted by $\mathcal{D}  $, and this is the class of weights that we mainly work with. For basic properties of weights from the above classes, see \cite{Pe16,PR14,PR21} and the references therein.

The theory of tent spaces, introduced by Coifman, Meyer and Stein in their seminal work \cite{CMS85}, has become a fundamental tool in harmonic analysis, complex analysis and operator theory. These spaces naturally extend several classical function spaces. For instance, with appropriate choices of $\omega$ and $p,q$, one recovers Hardy spaces, Bergman spaces and the space of bounded mean oscillation. Moreover, the spaces $T^{\infty}_q(\mu)$ naturally relate to the classical Carleson measures. The study of such spaces has attracted considerable attention in recent years; see, for example, \cite{CN,CPW,LP24-1,LPW24,Mu20,PP20} for applications of tent space theory in operator theory on Hardy and Bergman spaces, \cite{LP24,PR15,PRS15} for the systematic treatment of weighted Bergman spaces via tent space methods, and \cite{AMPR,Ch23,CW22,Per22,WZ21,WZ22} for the development of function and operator theory on analytic tent spaces. In particular, Aguilar-Hernández, Mas, Pel\'aez and R\"atty\"a \cite{AMPR} recently explored some properties of the analytic tent spaces $T_q^p(\omega)$, including the Littlewood--Paley formulas and boundedness of integration operators on these spaces.

In this paper, we are going to investigate the fundamental function and operator theory on analytic Hardy–Carleson--type tent spaces $AT_p^\infty(\omega)$, where the weight $\omega$ belongs to the class $\mathcal{D}$. We first establish sharp embedding theorems for $AT_p^\infty(\omega)$—including characterizations of bounded embeddings into weighted tent spaces $T_q^\infty(\mu)$—in terms of Carleson-type conditions on the measure $\mu$ relative to $\omega$. Next, we obtain a derivative characterization of $AT_q^\infty(\omega)$, which yields a complete description of the boundedness and compactness of Volterra-type integration operators acting between such spaces.

To state our results, recall that the pseudo-hyperbolic metric $d(\cdot,\cdot)$ on $\D$ is defined by
$$d(z,u):=\left|\frac{z-u}{1-z\overline{u}}\right|,\quad z,u\in\D.$$
For $z\in\D$ and $r\in(0,1)$, we use $\Delta(z,r):=\{u\in\D:d(z,u)<r\}$ to denote the pseudo-hyperbolic metric disk centered at $z$ with radius $r$. We are now ready to state our first result, which completely characterizes the boundedness of the embeddings $I_d:AT^{\infty}_p(\omega)\to T^{\infty}_q(\mu)$ and $I_d:AT^0_p(\omega)\to T^0_q(\mu)$.

\begin{theorem}\label{bounded-embedding}
Let $0<p, q<\infty$, $\omega\in\DDD$, and let $\mu$ be a positive Borel measure on $\D$. For $r\in(0,1)$, write
$$G_{\mu,r}(z):=\frac{\mu(\Delta(z,r))^{\frac{1}{q}}}{\widehat{\omega}(z)^{\frac{1}{p}}(1-|z|)^{\frac{1}{q}}}.$$
\begin{enumerate}
  \item [(1)] If $p\leq q$, then the following conditions are equivalent:
      \begin{enumerate}
          \item [(a)] $I_d:AT^{\infty}_p(\omega)\to T^{\infty}_q(\mu)$ is bounded;
          \item [(b)] $I_d:AT^0_p(\omega)\to T^0_q(\mu)$ is bounded;
          \item [(c)] for some (or any) $r\in(0,1/4)$, $G_{\mu,r}\in L^{\infty}$.
      \end{enumerate}
      Moreover, in these cases,
      $$\|I_d\|\asymp\left\|G_{\mu,r}\right\|_{L^{\infty}}.$$
  \item [(2)] If $p>q$, then the following conditions are equivalent:
      \begin{enumerate}
          \item [(a)] $I_d:AT^{\infty}_p(\omega)\to T^{\infty}_q(\mu)$ is bounded;
          \item [(b)] $I_d:AT^0_p(\omega)\to T^0_q(\mu)$ is bounded;
          \item [(c)] for some (or any) $r\in(0,1/4)$, $G_{\mu,r}\in T^{\infty}_{\frac{pq}{p-q},-1}$.
      \end{enumerate}
      Moreover, in these cases,
      $$\|I_d\|\asymp\left\|G_{\mu,r}\right\|_{T^{\infty}_{\frac{pq}{p-q},-1}}.$$
\end{enumerate}
\end{theorem}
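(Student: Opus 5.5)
Our plan is to discretize $\mu$ against a lattice and reduce both parts to pointwise information on $AT^\infty_p(\omega)$, plus a duality or factorization argument for $p>q$. Fix $r\in(0,1/4)$. Take an $r$-lattice $\{a_k\}$ and the associated covering by disks $\Delta(a_k,r)$, or by dyadic ``top halves'' $R_k$ of Carleson boxes. Since $\omega\in\DDD\subset\DD$, the quantities $\widehat\omega(z)$ and $1-|z|$ are essentially constant on $\Delta(z,r)$. It follows that $G_{\mu,r}$ for different $r$ are comparable, up to passing to coarser lattices in the usual way, and $G_{\mu,r}$ is comparable to its value at the lattice point.

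The basic pointwise estimate we use for $f\in AT^\infty_p(\omega)$ is the bound on the mean
$$M_p(f;z):=\left(\frac{1}{(1-|z|)^2}\int_{\Delta(z,2r)}|f|^p\,dA\right)^{1/p}.$$
Subharmonicity gives $|f(u)|^p\lesssim \frac{1}{(1-|z|)^2}\int_{\Delta(z,2r)}|f|^p\,dA$ for $u\in\Delta(z,r)$. Now $\Delta(z,2r)\subset S(b)$ with $1-|b|\asymp1-|z|$, and the $\DDD$-condition (via $\check{\mathcal D}$) gives $\omega(\Delta(z,2r))\gtrsim\widehat\omega(z)(1-|z|)$ on average. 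We expect to need $\check{\mathcal D}$ here, since $\omega$ could vanish on annuli. To handle this we shall average over slightly larger radial ranges, using subharmonicity on a region where $\omega$ carries mass comparable to $\widehat\omega(z)(1-|z|)$. This yields
$$\sup_{u\in\Delta(z,r)}|f(u)|^p\lesssim \frac{\|f\|^p_{AT^\infty_p(\omega)}}{\widehat\omega(z)}.$$
This step, namely handling a possibly degenerate $\omega$ in the pointwise estimate, is where we use the precise structure of $\DDD$. It is a main obstacle.

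For (1) with $p\le q$, the direction (c)$\Rightarrow$(a) goes as follows. For a box $S(a)$ we bound
$$\int_{S(a)}|f|^q\,d\mu\le\sum_{a_k\in \widetilde S(a)}\mu(\Delta(a_k,r))\sup_{\Delta(a_k,r)}|f|^q\lesssim\|G\|_\infty^q\sum_k\widehat\omega(a_k)^{q/p}(1-|a_k|)\sup_{\Delta(a_k,r)}|f|^q .$$
We then split $|f|^q=|f|^p|f|^{q-p}$. The factor $|f|^{q-p}$ is bounded by $\|f\|^{q-p}\widehat\omega^{-(q-p)/p}$ using the pointwise estimate. What remains is $\sum_k\widehat\omega(a_k)(1-|a_k|)\sup_{\Delta(a_k,r)}|f|^p$. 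By subharmonicity this is $\lesssim\int_{S(a')}|f|^p\omega\,dA$, again using $\check{\mathcal D}$ to replace $\widehat\omega(a_k)(1-|a_k|)$ by $\omega$-mass of nearby regions, and hence it is $\lesssim(1-|a|)\|f\|^p$. The little-oh version (c)$\Rightarrow$(b) is identical with the tails localized. For (a) or (b)$\Rightarrow$(c) we test on
$$f_a(z)=\widehat\omega(a)^{-1/p}\left(\frac{1-|a|}{1-\bar az}\right)^{\gamma}$$
with $\gamma$ large. Because $\omega\in\DD$, we have $\widehat\omega(z)\gtrsim\widehat\omega(a)((1-|z|)/(1-|a|))^{\beta}$, so $f_a\in AT^0_p(\omega)$ with norm $\lesssim1$ once $\gamma>\beta/p$; this uses the standard integral $\int_{S(b)}|1-\bar az|^{-\gamma p}\omega\,dA$ estimate for $\DD$. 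Also $|f_a|\asymp\widehat\omega(a)^{-1/p}$ on $\Delta(a,r)\subset S(b)$ with $1-|b|\asymp 1-|a|$, which gives $\mu(\Delta(a,r))\lesssim\|I_d\|^q\widehat\omega(a)^{q/p}(1-|a|)$.

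For (2) with $p>q$, the direction (c)$\Rightarrow$(a) uses, on a box $S(a)$, the discretization above together with Hölder with exponents $p/q$ and $p/(p-q)$ against the measure $\nu=\sum_k(1-|a_k|)\delta_{a_k}$:
$$\sum_k (1-|a_k|)G(a_k)^q\,\widehat\omega(a_k)^{q/p}\sup|f|^q\le\Big(\sum_k(1-|a_k|)G^{\frac{pq}{p-q}}\Big)^{\frac{p-q}{p}}\Big(\sum_k(1-|a_k|)\widehat\omega(a_k)\sup|f|^p\Big)^{\frac qp}.$$
The first factor is $\lesssim((1-|a|)\|G\|^{pq/(p-q)}_{T^\infty_{pq/(p-q),-1}})^{(p-q)/p}$, since the discrete sum is comparable to $\int_{S(a')}G^{s}\frac{dA}{1-|z|}$. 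The second factor is bounded as in part (1). Multiplying, the powers of $1-|a|$ add to $1$. The converse is the harder direction, and we expect it to be the main obstacle. We plan a Luecking-type argument. By a duality for discrete Carleson-type sequence spaces, it suffices to show that $\sum_k(1-|a_k|)G(a_k)^q\widehat\omega(a_k)^{q/p}\lambda_k$ is controlled over boxes for sequences $\lambda$ with $\sup_a\frac1{1-|a|}\sum_{a_k\in S(a)}(1-|a_k|)\lambda_k^{p/q}\le1$. To get this we build test functions $F=\sum_k c_k\widehat\omega(a_k)^{-1/p}\left(\frac{1-|a_k|}{1-\bar a_kz}\right)^\gamma$ with $c_k=\lambda_k^{1/q}$. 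We show $\|F\|_{AT^\infty_p(\omega)}\lesssim\sup$ of the discrete Carleson norm of $|c_k|^p$, by splitting into $a_k$ inside and outside an enlarged box and using kernel decay. We then apply Khinchine's inequality with Rademacher signs to extract $\sum_k|c_k|^q\mu(\Delta(a_k,r))\widehat\omega(a_k)^{-q/p}$ on each box. Finally, the discrete $T^\infty$-duality between $\ell^{p/q}$- and $\ell^{p/(p-q)}$-type Carleson sequence spaces, which is analogous to the Coifman–Meyer–Stein factorization $T^\infty_{s}\cdot T^\infty_{t}$, turns this into $G\in T^\infty_{pq/(p-q),-1}$. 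The case (b)$\Rightarrow$(c) follows similarly, restricting to finitely supported $\lambda$ and applying the closed graph argument so that the norm is bounded.
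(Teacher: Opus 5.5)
Your architecture matches the paper's. For $p\le q$ you use the growth estimate, the splitting $|f|^q=|f|^{q-p}|f|^p$, and the kernels $f_a$. For $p>q$ you use H\"older with exponents $\frac{p}{p-q},\frac pq$ for sufficiency, and atoms with Rademacher signs, Khinchine's inequality, bounded overlap and the discrete multiplier lemma (Lemma 2.2 of [CW22] in the paper) for necessity. There is, however, one step that fails as written: the local comparison
\[
\sum_{a_k\in\widetilde S(a)}\widehat\omega(a_k)(1-|a_k|)\sup_{\Delta(a_k,r)}|f|^p\lesssim\int_{S(a')}|f|^p\omega\,dA,
\]
which you attribute to subharmonicity plus $\check{\mathcal D}$. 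No box-to-box inequality of this kind holds on $\DDD$. Here is a counterexample.
\begin{itemize}
\item Take $\omega=\epsilon^{-1}\sum_n\chi_{A_n}$ with $A_n=\{1-2^{-n}\le|z|\le 1-2^{-n}+\epsilon2^{-n}\}$. Then $\widehat\omega(r)\asymp1-r$, so $\omega\in\DDD$, yet $\omega$ vanishes on the gaps between the annuli.
\item Put a small closed disc $D\subset S(a)$ inside one gap.
\item The complement of $\overline{S(a')\cap\mathrm{supp}\,\omega}\cup D$ is connected, so Runge's theorem gives polynomials $P$ with $|P|<\delta$ on $S(a')\cap\mathrm{supp}\,\omega$ and $|P-1|<\delta$ on $D$.
\end{itemize}
For these $P$ your left side stays bounded below, while the right side is $O(\delta^p)$. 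The same example rules out any local version of your pointwise bound. Subharmonicity cannot transport information from $\Delta(a_k,r)$ to the annuli inside a fixed box; the control has to come from the global Carleson norm.

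The missing ingredient is the global, kernel form of the comparison, valid uniformly in $a$:
\[
\int_\D\frac{(1-|a|)^s}{|1-\overline a u|^{s+1}}|f(u)|^p\frac{\widehat\omega(u)}{1-|u|}\,dA(u)\asymp\int_\D\frac{(1-|a|)^s}{|1-\overline a u|^{s+1}}|f(u)|^p\omega(u)\,dA(u).
\]
It follows from Lemma \ref{hat-equivalent} applied to the analytic function $u\mapsto f(u)(1-\overline a u)^{-(s+1)/p}$, and this is how the paper finishes \eqref{suff-embedding} and \eqref{suff-embedding2}. Alternatively, your radial-averaging idea works if you average $|f|^p$ over whole circles $|u|=\rho$ through the Poisson kernel, with $\rho$ in the range where $\check{\mathcal D}$ puts mass. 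You then absorb the Poisson tails into $\frac{(1-|a|)^s}{|1-\overline a u|^{s+1}}$, which works for $0<s\le1$. With this in place, your bounds $\lesssim(1-|a|)\|f\|^p$ and the little-oh versions go through.

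You have also placed the obstacle in the wrong step. The pointwise bound needs only $\DD$: Poisson averages over $|u|=\rho\ge\frac{1+|z|}{2}$ give
\[
|f(z)|^p\,\widehat\omega\big(\tfrac{1+|z|}{2}\big)\lesssim\int_\D\frac{1-|z|}{|1-\overline z u|^2}|f(u)|^p\omega(u)\,dA(u).
\]
$\check{\mathcal D}$ is needed precisely to pass from $\widehat\omega(u)/(1-|u|)$ back to $\omega$.

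Two smaller points. First, in your duality step the quantity to control is $\sum_k(1-|a_k|)G(a_k)^q\lambda_k$, without the factor $\widehat\omega(a_k)^{q/p}$. That is what Khinchine actually produces, since $\mu(\Delta(a_k,2r))\widehat\omega(a_k)^{-q/p}=G_{\mu,2r}(a_k)^q(1-|a_k|)$. Second, your treatment of (b)$\Rightarrow$(c) for $p>q$ via finitely supported sequences is a valid simplification of the paper's route, which uses Theorem \ref{atom2} and a little-oh multiplier lemma. Finite atomic sums lie in $H^\infty\subset AT^0_p(\omega)$, and a truncation proof of the discrete duality uses only finitely supported test sequences. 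So you get the norm bound directly, with no closed-graph argument.
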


The above theorem constitutes a Carleson-type embedding theorem for the analytic tent spaces $AT^{\infty}_p(\omega)$. Its proof relies on fundamental structural properties of these spaces—including growth estimates—as well as the Carleson measure characterization of the (non-analytic) tent spaces $T^{\infty}_q(\mu)$. Furthermore, we establish a “little-oh” counterpart to Theorem \ref{bounded-embedding}: specifically, we provide a complete characterization of the compactness of the identity operator $I_d$ acting from $AT^{\infty}_p(\omega)$ to $T^{\infty}_q(\mu)$ and from $AT^0_p(\omega)$ to $T^0_q(\mu)$. This characterization is based on the atomic decompositions of $AT^{\infty}_p(\omega)$ and $AT^0_p(\omega)$.

Our next result is the following Littlewood--Paley formula, which provides equivalent (quasi-)norms for the spaces $AT^{\infty}_q(\omega)$ in terms of the iterated derivatives.

\begin{theorem}\label{Littlewood--Paley}
Let $0<q<\infty$, $m\in\mathbb{N}$, and let $\omega$ be a radial weight. Then the following conditions are equivalent:
\begin{enumerate}
    \item [(a)] for any analytic function $f$ on $\D$, $f\in AT^{\infty}_q(\omega)$ if and only if the function $f^{(m)}(\cdot)(1-|\cdot|)^m$ belongs to $T^{\infty}_q(\omega)$, and
    \begin{equation}\label{LP}
    \|f\|_{AT^{\infty}_q(\omega)}\asymp\sum_{j=0}^{m-1}\left|f^{(j)}(0)\right|+\left\|f^{(m)}(\cdot)(1-|\cdot|)^m\right\|_{T^{\infty}_q(\omega)};
    \end{equation}
   \item [(b)] $\omega\in\DDD$.
\end{enumerate}
\end{theorem}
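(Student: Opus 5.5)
The implication (b)$\Rightarrow$(a) will be proved by a Carleson-box localization, reducing to Littlewood--Paley type estimates on Bergman-type spaces induced by doubling weights. The converse (a)$\Rightarrow$(b) will be proved by testing \eqref{LP} on monomials.

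\emph{(b)$\Rightarrow$(a), upper bound.} Fix $a\in\D$. The quantity $\frac{1}{1-|a|}\int_{S(a)}|f^{(m)}(z)|^q(1-|z|)^{mq}\omega(z)\,dA(z)$ must be controlled by $\|f\|^q_{AT^\infty_q(\omega)}$. By subharmonicity and Cauchy estimates on pseudo-hyperbolic disks,
$$|f^{(m)}(z)|^q(1-|z|)^{mq}\lesssim \frac{1}{(1-|z|)^2}\int_{\Delta(z,r)}|f|^q\,dA .$$
Since $\omega\in\DD$ gives $\omega$-averages comparable on such disks (after replacing $\omega$ by $\widehat\omega/(1-|\cdot|)$, which is legitimate in integrated form for $\omega\in\DDD$), Fubini yields
$$\int_{S(a)}|f^{(m)}|^q(1-|z|)^{mq}\omega\,dA\lesssim\int_{S(a')}|f|^q\,\omega\,dA,$$
where $S(a')$ is a slightly enlarged box with $1-|a'|\asymp 1-|a|$. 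Taking the supremum gives the bound. The $\check{\mathcal D}$ part is needed because $\omega$ itself need not be pointwise comparable on hyperbolic disks. I would handle this by working with the weight $\widetilde\omega=\widehat\omega/(1-|z|)$ throughout: for $\omega\in\DDD$ one has $\int_{S(a)}g\,\omega\,dA\asymp\int_{S(a)}g\,\widetilde\omega\,dA$ for subharmonic-type $g$ (radial integration by parts, using that $M_q(r,g)$ is increasing). The same trick is used below.

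\emph{(b)$\Rightarrow$(a), lower bound.} It suffices to treat $m=1$ and iterate, since $f^{(m-1)}(1-|z|)^{m-1}$ is controlled in the same way. For the box $S(a)$, decompose $\int_{S(a)}|f|^q\omega\,dA$ into two pieces. The first is the integral of $|f-f(a^*)|^q$ over $S(a)$, with $a^*$ a point slightly above $a$. The second is $|f(a^*)|^q\,\omega(S(a))$, where $\omega(S(a))\asymp(1-|a|)\widehat\omega(a)$. For the second piece, $|f(a^*)|$ is estimated by integrating $f'$ along the radius from $0$ to $a^*$ over dyadic annuli. Each annulus $\{1-2^{-k}\le|z|<1-2^{-k-1}\}$ above $a$ contributes a term bounded by the derivative norm times $\widehat\omega(1-2^{-k})^{-1/q}$. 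Summing requires the geometric decay $\widehat\omega(1-2^{-k})/\widehat\omega(a)\lesssim 2^{-\beta(k_a-k)}$, which is exactly $\check{\mathcal D}$ and makes the series comparable to $\widehat\omega(a)^{-1/q}$; the $|f(0)|$ term appears here. For the first piece, I would use the Hardy/Bergman Littlewood--Paley inequality localized to the tent over $S(a)$, namely the known formula for $A^q_\omega$ with $\omega\in\DDD$ applied to a localized version, as in \cite{PR21}. I expect this localization, carried out with boxes instead of the whole disk, to be the main obstacle. My first attempt would be the pointwise estimate
$$|f(z)-f(a^*)|\le\int_{[a^*,z]}|f'|\,|d\zeta|$$
along the path radially up to height $|a^*|$ and then across, followed by Hardy's inequality with weight $\widehat\omega$, which is available because $\widehat\omega$ behaves like a power for $\omega\in\DDD$.

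\emph{(a)$\Rightarrow$(b).} Take $f(z)=z^n$ and compute both sides of \eqref{LP}. For the $T^\infty_q(\omega)$ norm of $z^n$, the supremum over boxes is dominated by $a=0$, giving $\|z^n\|^q\asymp\int_0^1 s^{nq}\omega(s)\,ds=:\omega_{nq}$. The right-hand side gives $n^{mq}\int_0^1 s^{(n-m)q}(1-s)^{mq}\omega(s)\,ds$. The $\lesssim$ direction of \eqref{LP} yields $\omega_{nq}\lesssim n^{mq}\int s^{nq}(1-s)^{mq}\omega$. By the known characterization of $\check{\mathcal D}$ via moments (\cite{PR21}: $\omega\in\check{\mathcal D}$ iff $\omega_x\lesssim x^{\beta}(\omega_{[\beta]})_x$-type conditions), this gives $\omega\in\check{\mathcal D}$. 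The $\gtrsim$ direction gives a condition equivalent to $\omega_{x}\lesssim\omega_{2x}$, which is $\DD$. In both cases I would test the boxes near $0$ and use $1-|a|\asymp1$ so that both directions reduce to moment inequalities.
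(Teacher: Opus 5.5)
The gap is in the lower estimate of (b)$\Rightarrow$(a), specifically the first piece
\[
\frac{1}{1-|a|}\int_{S(a)}|f(z)-f(a^*)|^q\,\omega(z)\,dA(z)\lesssim\left\|f'(\cdot)(1-|\cdot|)\right\|^q_{T^{\infty}_q(\omega)}.
\]
You correctly single this out as the main obstacle, but you do not prove it, and neither tool you point to works as stated.

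First, \cite{PR21} contains no localized Littlewood--Paley formula to quote. The $A^q_\omega$ formula is a whole-disk statement resting on the monotonicity of integral means over full circles, and integrals over the arcs of a Carleson box are not monotone in the radius. For the same reason, your claim that $\int_{S(a)}g\,\omega\,dA\asymp\int_{S(a)}g\,\widehat{\omega}(z)(1-|z|)^{-1}dA(z)$ for subharmonic $g$ fails on individual boxes. Take $\omega=1-\chi_{[|a|,(1+|a|)/2]}\in\DDD$, $\xi_a=a/|a|$, and $g(z)=|z-\xi_a|^{Nq}$ with $N\to\infty$. Then $g$ is largest near the top of $S(a)$, exactly where $\omega$ vanishes. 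What is true is that the two Carleson norms are comparable. To see this, replace the box by the kernel $(1-|a|)^s|1-\overline{a}u|^{-s-1}$ on all of $\D$ and apply Lemma \ref{hat-equivalent} to the analytic function $f(u)(1-\overline{a}u)^{-(s+1)/q}$. This is how the paper proceeds, and it repairs your upper bound.

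Second, your radial-path-plus-Hardy argument can be made to work for $q\geq1$. On each ray, $t\mapsto\int_{|a^*|}^{t}|f'(\rho e^{i\theta})|\,d\rho$ is increasing, so $\omega$ can be traded for $\widehat{\omega}(t)/(1-t)$ ray by ray using $\DD$. The Muckenhoupt condition for the pair $\widehat{\omega}(t)/(1-t)$, $(1-\rho)^{q-1}\widehat{\omega}(\rho)$ then follows from Lemma \ref{elementary-Dcheck}. For $0<q<1$, however, Hardy's inequality fails for general nonnegative functions: a bump of unit mass and width $\epsilon$ keeps the left side fixed while the right side is $O(\epsilon^{1-q})$. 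So analyticity must enter, e.g.\ by splitting the radial integral dyadically, using $(\sum_j x_j)^q\leq\sum_j x_j^q$, and applying the sub-mean-value property of $|f'|^q$ on pseudo-hyperbolic disks. As written, the substantive half of (b)$\Rightarrow$(a) is not established, and the route you name breaks down for $q<1$.

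For comparison, the paper never localizes to boxes. It subtracts the Taylor polynomial of degree $2m$ and represents $g$ through $g^{(m)}$ via \cite[Proposition 4.27]{Zh07}, obtaining $|g(z)|\lesssim\int_{\D}(1-|u|)^{m+\gamma}|1-\overline{u}z|^{-2-\gamma}|g^{(m)}(u)|\,dA(u)$. It raises this to the power $q$ by H\"older's inequality when $q>1$, or by \cite[Proposition 4.17]{Zh07} when $q\leq1$; the latter is where analyticity handles small $q$. It concludes with Fubini's theorem, Lemma \ref{hat-equivalent}, and the two-kernel estimate Lemma \ref{gFR}(2), which is where $\check{\mathcal{D}}$ enters by absorbing the factor $(1-|z|)^{-\epsilon q}$. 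The upper bound is the same computation run from the reproducing formula.

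Two smaller corrections:
\begin{enumerate}
\item In your bound for $|f(a^*)|$, the ratio is inverted: the decay you need is $\widehat{\omega}(a)/\widehat{\omega}(1-2^{-k})\lesssim2^{-\beta(k_a-k)}$. You also need the pointwise estimate $|f'(z)|(1-|z|)\lesssim\widehat{\omega}(z)^{-1/q}\|f'(\cdot)(1-|\cdot|)\|_{T^\infty_q(\omega)}$, proved as in Lemma \ref{thm:growth-estimate-unit-disk}.
\item In (a)$\Rightarrow$(b), the monomials only give moment inequalities at shifted discrete exponents ($s^{(n-m)q}$, not $s^{nq}$). So, as the paper does, first extract $\DD$ from the upper inequality together with $\omega_{nq+1}\leq\omega_{(n-1)q+1}$, and only then use $\DD$ to deduce $\check{\mathcal{D}}$ from the lower one.
\end{enumerate}
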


The implication (a)$\Rightarrow$(b) of the above theorem follows by testing \eqref{LP} with monomials. When establishing the implication (b)$\Rightarrow$(a), a generalized Forelli--Rudin-type estimate involving weights from $\DDD$ will play an essential role.

With Theorems \ref{bounded-embedding} and \ref{Littlewood--Paley} in hand, we can determine the bounded Volterra--type operators acting between $AT^{\infty}_p(\omega)$ and $AT^{\infty}_q(\omega)$. Given an analytic symbol $g$ on $\D$, the Volterra--type integration operator $J_g$ is defined by
$$
J_g f(z) = \int_0^z f(\zeta) g'(\zeta) d\zeta, \quad z \in \mathbb{D},\quad f\in\mathcal{H}(\D).
$$
The boundedness, compactness and spectral properties of $J_g$ on various spaces---such as Hardy spaces, Bergman spaces, and $BMOA$---have been extensively studied; we refer to \cite{ACS01,AS97,Mu20,Pa16} and the references therein. In the setting of tent spaces, Wang and Zhou \cite{WZ22} characterized the boundedness and compactness of $J_g$ acting between analytic Hardy type tent spaces induced by standard weights. The first author and Wang \cite{CW22} investigated the boundedness, compactness and strict singularity of $J_g$ acting on analytic Hardy--Carleson--type tent spaces induced by standard weights. Very recently, Aguilar-Hern\'andez, Mas, Pel\'{a}ez and R\"atty\"a \cite{AMPR} decribed the boundedness of $J_g$ acting on the analytic tent spaces $AT^p_q(\omega)$ with $\omega\in\DDD$. We here characterize the boundedness of $J_g:AT^{\infty}_p(\omega)\to AT^{\infty}_q(\omega)$ for all $0<p,q<\infty$ and $\omega\in\DDD$.

\begin{theorem}\label{bounded-Volterra}
Let $0<p,q<\infty$, $\omega\in\DDD$, and let $g$ be an analytic function on $\D$.
\begin{enumerate}
    \item [(1)] If $p\leq q$, then the following conditions are equivalent:
      \begin{enumerate}
        \item [(a)] $J_g:AT^{\infty}_p(\omega)\to AT^{\infty}_q(\omega)$ is bounded;
        \item [(b)] $J_g:AT^0_p(\omega)\to AT^0_q(\omega)$ is bounded;
        \item [(c)] $\sup_{z\in\D}\left|g'(z)\right|(1-|z|)\widehat{\omega}(z)^{\frac{1}{q}-\frac{1}{p}}<\infty$.
      \end{enumerate}
      Moreover,
      $$\|J_g\|\asymp
      \sup_{z\in\D}\left|g'(z)\right|(1-|z|)\widehat{\omega}(z)^{\frac{1}{q}-\frac{1}{p}}.$$
    \item [(2)] If $p>q$, then the following conditions are equivalent:
      \begin{enumerate}
          \item [(a)] $J_g:AT^{\infty}_p(\omega)\to AT^{\infty}_q(\omega)$ is bounded;
          \item [(b)] $J_g:AT^0_p(\omega)\to AT^0_q(\omega)$ is bounded;
          \item [(c)] $g\in AT^{\infty}_{\frac{pq}{p-q}}(\omega)$.
      \end{enumerate}
      Moreover,
      $$\|J_g\|\asymp\|g-g(0)\|_{AT^{\infty}_{\frac{pq}{p-q}}(\omega)}.$$
\end{enumerate}
\end{theorem}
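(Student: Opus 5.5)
The plan is to reduce everything to Theorems \ref{bounded-embedding} and \ref{Littlewood--Paley}. By Theorem \ref{Littlewood--Paley} with $m=1$, and since $J_gf(0)=0$ and $(J_gf)'=fg'$, we have
$$\|J_gf\|_{AT^\infty_q(\omega)}\asymp\left\|f(\cdot)g'(\cdot)(1-|\cdot|)\right\|_{T^\infty_q(\omega)}=\|f\|_{T^\infty_q(\mu_g)},\qquad d\mu_g(z):=|g'(z)|^q(1-|z|)^q\omega(z)\,dA(z).$$
Thus $J_g:AT^\infty_p(\omega)\to AT^\infty_q(\omega)$ is bounded if and only if $I_d:AT^\infty_p(\omega)\to T^\infty_q(\mu_g)$ is bounded, with comparable norms. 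The same holds for the little-oh spaces, provided we check that the Littlewood--Paley equivalence also identifies $AT^0_q(\omega)$ with the analytic $f$ for which $f'(1-|\cdot|)\in T^0_q(\omega)$. I expect this to follow from the same proof localized to Carleson boxes, or by approximating with dilations $f_r$. Granting it, (a)$\Leftrightarrow$(b) in both parts comes directly from Theorem \ref{bounded-embedding}, and it remains to translate condition (c) there.

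For $p\le q$ we must show $\|G_{\mu_g,r}\|_{L^\infty}\asymp\sup_z|g'(z)|(1-|z|)\widehat\omega(z)^{1/q-1/p}$. On $\Delta(z,r)$ the quantities $1-|u|$, $\widehat\omega(u)$ and $\omega$-averages are comparable; $\widehat\omega$ is comparable by $\omega\in\DD$. Since $\omega$ is radial, $\omega(\Delta(z,r))\asymp\widehat\omega(z)(1-|z|)$ for $\omega\in\DD$, which is a standard fact. Subharmonicity of $|g'|^q$ over a slightly larger disk gives
$$\mu_g(\Delta(z,r))\lesssim(1-|z|)^q\,\omega(\Delta(z,r))\sup_{\Delta(z,r)}|g'|^q .$$
Hence $G_{\mu_g,r}(z)\lesssim\sup_{u\in\Delta(z,r)}|g'(u)|(1-|u|)\widehat\omega(u)^{1/q-1/p}$. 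For the reverse inequality, one needs $|g'(z)|^q\lesssim\omega(\Delta(z,r))^{-1}\int_{\Delta(z,r)}|g'|^q\omega\,dA$. This sub-mean-value property with respect to $\omega\,dA$ is the delicate point, because $\omega$ itself need not be doubling pointwise. I would handle it by integrating the circle-mean subharmonicity of $|g'|^q$ radially against $\omega$ over an annular sector. Alternatively, one can pass to the equivalent "any $r$" version and use lower estimates via radial symmetry, since $\omega$ is radial and only radial integrals appear. The statement that the condition holds for some (or any) $r$ in Theorem \ref{bounded-embedding} lets us enlarge the disks as needed.

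For $p>q$, set $s=\frac{pq}{p-q}$. We must show $\|G_{\mu_g,r}\|_{T^\infty_{s,-1}}\asymp\|g'(\cdot)(1-|\cdot|)\|_{T^\infty_s(\omega)}$; by Theorem \ref{Littlewood--Paley} the right side is comparable to $\|g-g(0)\|_{AT^\infty_s(\omega)}$. Here $G^s_{\mu_g,r}(z)\asymp \bigl(\text{local }L^q(\omega)\text{-average of }|g'|(1-|\cdot|)\bigr)^s\,\widehat\omega(z)^{s/q-s/p}(1-|z|)^{s/q}$, and $s/q-s/p=1$. Together with $\omega(\Delta)\asymp\widehat\omega(1-|z|)$, this gives $G^s_{\mu_g,r}(z)(1-|z|)^{-1}\asymp(\text{avg})^{s}\,\widehat\omega(z)/(1-|z|)$. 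Integrating over $S(a)$ against $dA$ and swapping with the local averages (Fubini over $\Delta(z,r)$, whose area is comparable to $(1-|z|)^2$) converts this to $\int_{S(a')}|g'|^s(1-|u|)^s\omega\,dA$ up to constants. For the upper bound, local $L^q$ averages are dominated by local $L^s$ averages via Hölder, since $s>q$. For the lower bound, the local $L^s$ average of an analytic function is controlled by the local $L^q$ average on a larger disk, by subharmonicity. Carleson boxes get slightly enlarged, which is harmless in the $T^\infty$ norm. I expect the main obstacle to be the weighted sub-mean-value estimates over pseudo-hyperbolic disks for a general $\omega\in\DDD$, together with the little-oh version of the Littlewood--Paley formula.
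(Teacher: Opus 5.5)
\textbf{There is a genuine gap in the necessity directions.} Your reduction of $J_g$ to the embedding $I_d:AT^\infty_p(\omega)\to T^\infty_q(\mu_g)$ via Theorem \ref{Littlewood--Paley} is correct. For the little-oh spaces you can simply cite Corollary \ref{LP0}. The sufficiency directions (c)$\Rightarrow$(a) also work with your $d\mu_g=|g'|^q(1-|\cdot|)^q\omega\,dA$, because they only need the upper bound $\omega(\Delta(z,r))\lesssim\widehat\omega(z)(1-|z|)$, which does hold for $\omega\in\DD$.

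The necessity directions (a)$\Rightarrow$(c) in both parts are where it breaks. They rest on your ``standard fact'' $\omega(\Delta(z,r))\asymp\widehat\omega(z)(1-|z|)$ and on a sub-mean-value inequality with respect to $\omega\,dA$. Both are false for general $\omega\in\DDD$: the two-sided estimate holds for Carleson boxes $S(a)$, not for pseudo-hyperbolic disks. For a counterexample, let $\omega=1$ where $1-|u|\in(M^{-n}/2,M^{-n}]$ for some $n\geq0$, and $\omega=0$ elsewhere, with $M$ large.
\begin{itemize}
\item Then $\widehat\omega(t)\asymp 1-t$, so $\omega\in\DDD$.
\item But $\omega$ vanishes on the annuli $\{M^{-n-1}<1-|u|\le M^{-n}/2\}$, which contain disks $\Delta(z,1/4)$ once $M$ is large.
\item For $g(z)=z$, your $G_{\mu_g,r}(z)$ is therefore $0$ at such $z$ for every $r\in(0,1/4)$, while $|g'(z)|(1-|z|)\widehat\omega(z)^{1/q-1/p}\neq0$.
\end{itemize}
Enlarging $r$ within $(0,1/4)$ cannot help, since $M$ is arbitrary. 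Nor can any argument that only looks at $g'$ on a fixed hyperbolic neighbourhood of $z$. For such $\omega$, an analytic function can have modulus $1$ at $z$ and be uniformly tiny on the part of a Carleson box around $z$ where $\omega\neq0$. The maximum principle does not prevent this, because the $\omega$-free annulus through $z$ exits the box, and Runge's theorem supplies such functions. The same objection undoes your lower bound in part (2), where a local $L^s(\omega)$ average is supposed to be controlled by a local $L^q(\omega)$ average.

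\textbf{The missing idea is to remove $\omega$ globally, before localizing, using Lemma \ref{hat-equivalent}.} For each $a\in\D$, apply the lemma to the analytic function $fg'(1-\overline{a}z)^{-(s+1)/q}$, which absorbs the kernel $\frac{(1-|a|)^s}{|1-\overline{a}z|^{s+1}}$. This gives $\|J_gf\|_{AT^\infty_q(\omega)}\asymp\|f\|_{T^\infty_q(\mu_g)}$ with $d\mu_g=|g'|^q(1-|z|)^{q-1}\widehat\omega(z)\,dA(z)$. By \eqref{D-constant}, this density is essentially constant on $\Delta(z,r)$. Ordinary unweighted subharmonicity of $|g'|^q$ then gives, pointwise, $G_{\mu_g,r}(z)\gtrsim|g'(z)|(1-|z|)\widehat\omega(z)^{1/q-1/p}$, which settles (a)$\Rightarrow$(c) in part (1).

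In part (2), note that $\frac{pq}{p-q}\big(\frac1q-\frac1p\big)=1$. The same pointwise bound shows that $|g'|(1-|\cdot|)\widehat\omega^{1/q-1/p}$ lies in $T^\infty_{\frac{pq}{p-q},-1}$. Lemma \ref{hat-equivalent}, again with the kernel absorbed into the analytic function, together with Theorem \ref{Littlewood--Paley}, identifies this condition with $g\in AT^\infty_{\frac{pq}{p-q}}(\omega)$.
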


This paper is organized as follows. In Section \ref{preliminary}, we give some preliminary results involving Carleson measures and radial weights. In particular, we establish some generalized Forelli--Rudin-type estimates for the radial weights from $\DD$ and $\DDD$ that play an important role in this paper. In Section \ref{atomic-decomposition}, we establish the atomic decomposition for the analytic tent spaces $AT^{\infty}_q(\omega)$ and $AT^0_q(\omega)$. Section \ref{embedding-theorems} is devoted to the proof of Theorem \ref{bounded-embedding}. Furthermore, the compactness of the embeddings $I_d:AT^{\infty}_p(\omega)\to T^{\infty}_q(\mu)$ and $I_d:AT^0_p(\omega)\to T^0_q(\mu)$ is also characterized. Finally, we prove Theorems \ref{Littlewood--Paley} and \ref{bounded-Volterra} in Section \ref{LP-Volterra}.

Throughout the paper, we write $A\lesssim B$ (or $B\gtrsim A$) to denote that there exists a nonessential constant $C>0$ such that $A\leq CB$. If $A\lesssim B\lesssim A$, then we write $A\asymp B$. For a subset $E\subset \D$, $\chi_E$ denotes its characteristic function.

\section{Preliminaries}\label{preliminary}

In this section, we introduce some preliminary results that will be used throughout the paper.

\subsection{Carleson measures}

The concept of Carleson measures was introduced by Carleson \cite{Ca58,Ca62} when studying interpolating sequences for bounded analytic functions on the unit disk, which had become an important tool in the area of operator theory on analytic function spaces. A positive Borel measure $\nu$ on $\D$ is said to be a Carleson measure if
$$\|\nu\|_{CM}:=\sup_{a\in\D}\frac{\nu(S(a))}{1-|a|}<\infty.$$
It is well-known (see for instance \cite[Theorem 45]{ZZ08}) that $\nu$ is a Carleson measure if and only if for some (or any) $s>0$ one has
$$\sup_{a\in\D}\int_{\D}\frac{(1-|a|)^s}{|1-\overline{a}z|^{s+1}}d\nu(z)<\infty.$$
Moreover, with constant depending on $s$, the supremum of the above integral is comparable to $\|\nu\|_{CM}$.

A positive Borel measure $\nu$ on $\D$ is said to be a vanishing Carleson measure if
$$\lim_{|a|\to1^-}\frac{\nu(S(a))}{1-|a|}=0.$$
Similarly as before, $\nu$ is a vanishing Carleson measure if and only if for some (or any) $s>0$,
$$\lim_{|a|\to1^-}\int_{\D}\frac{(1-|a|)^s}{|1-\overline{a}z|^{s+1}}d\nu(z)=0.$$

Let $\mu$ be a positive Borel measure on $\D$ and $0<q<\infty$. By the definition of the tent spaces $T^{\infty}_q(\mu)$ and $T^0_q(\mu)$, we see that a measurable function $f$ belongs to $T^{\infty}_q(\mu)$ if and only if the measure $d\nu_{\mu,f}:=|f|^qd\mu$ is a Carleson measure, and it belongs to $T^0_q(\mu)$ if and only if $\nu_{\mu,f}$ is a vanishing Carleson measure. Moreover,
$$\|f\|_{T^{\infty}_q(\mu)}=\left\|\nu_{\mu,f}\right\|_{CM}^{1/q}.$$
Throughout the paper, we will use these characterizations repeatedly.

\subsection{Estimates on weights}

In this subsection, we collect some integral estimates involving doubling weights. We begin with the following classical Forelli--Rudin estimate, which can be found in \cite[Lemma 3.10]{Zh07}.

\begin{lemma}[\cite{Zh07}]\label{FR}
Let $\alpha>-1$ and $s>0$. Then for $z\in\D$,
$$\int_{\D}\frac{(1-|u|)^{\alpha}}{|1-\overline{u}z|^{2+\alpha+s}}dA(u)\asymp(1-|z|)^{-s}.$$
\end{lemma}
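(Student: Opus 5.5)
The statement in question is the classical Forelli--Rudin estimate, Lemma \ref{FR}. The plan is to reduce it to the one-variable behaviour of the Taylor coefficients of $(1-\overline{u}z)^{-\lambda}$ and then integrate in polar coordinates. Set $\lambda=(2+\alpha+s)/2$ and write
$$\frac{1}{(1-\overline{u}z)^{\lambda}}=\sum_{n\ge0}\frac{\Gamma(n+\lambda)}{n!\,\Gamma(\lambda)}(\overline{u}z)^n .$$
Then $|1-\overline{u}z|^{-(2+\alpha+s)}=|(1-\overline{u}z)^{-\lambda}|^2$. Integrating over $u=\rho e^{i\theta}$ and using orthogonality of $e^{in\theta}$ on $\T$ gives
$$\int_{\D}\frac{(1-|u|)^{\alpha}}{|1-\overline{u}z|^{2+\alpha+s}}dA(u)=2\pi\sum_{n\ge0}\left(\frac{\Gamma(n+\lambda)}{n!\,\Gamma(\lambda)}\right)^2|z|^{2n}\int_0^1\rho^{2n+1}(1-\rho)^{\alpha}d\rho .$$
All terms are nonnegative, so the interchange of sum and integral is justified by monotone convergence.

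Next we estimate the coefficients. Since $\alpha>-1$, the radial integral is a Beta function, $B(2n+2,\alpha+1)\asymp (n+1)^{-1-\alpha}$. Stirling's formula gives $\Gamma(n+\lambda)/n!\asymp (n+1)^{\lambda-1}$. Hence the $n$-th coefficient is comparable to $(n+1)^{2\lambda-2-1-\alpha}=(n+1)^{s-1}$, with constants depending only on $\alpha$ and $s$. The integral is therefore comparable to $\sum_{n\ge0}(n+1)^{s-1}|z|^{2n}$.

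Finally, since $s>0$, the standard comparison $\sum_n (n+1)^{s-1}x^n\asymp (1-x)^{-s}$ holds for $x\in[0,1)$. One way to see it is to compare with the coefficients $\Gamma(n+s)/(n!\Gamma(s))\asymp(n+1)^{s-1}$ of $(1-x)^{-s}$. Taking $x=|z|^2$ and using $1-|z|^2\asymp 1-|z|$ gives the claim.

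The only delicate point is that the asymptotic comparisons must be uniform in $n$, including small $n$. This is handled by writing every comparison with the factor $(n+1)$ rather than $n$. The hypotheses $\alpha>-1$ (so the Beta integral converges) and $s>0$ (so the series blows up like $(1-x)^{-s}$ rather than staying bounded or growing logarithmically) are exactly what the argument uses.
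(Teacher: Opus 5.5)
Your proof is correct and is essentially the standard argument for this Forelli--Rudin estimate in Zhu's book, which the paper cites without giving a proof: expand $(1-\overline{u}z)^{-\lambda}$ with $2\lambda=2+\alpha+s$, integrate in polar coordinates using orthogonality, and compare the resulting coefficients, which are $\asymp(n+1)^{s-1}$ by Stirling's formula and the Beta integral, with the Taylor coefficients of $(1-|z|^2)^{-s}$. You also handle correctly the uniformity in $n$ (by working with $n+1$) and the roles of $\alpha>-1$ and $s>0$.
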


The following lemma gives some characterizations of weights in $\DD$, which was proved in \cite[Lemma 2.1]{Pe16}.

\begin{lemma}[\cite{Pe16}]\label{elementary}
Let $\omega$ be a radial weight. Then the following conditions are equivalent:
\begin{enumerate}
    \item [(a)] $\omega\in \DD$;
    \item [(b)] there exist $C=C(\omega)>0$ and $\beta_0=\beta_0(\omega)>0$ such that for all $\beta\geq\beta_0$,
    $$\widehat{\omega}(r)\leq C\left(\frac{1-r}{1-t}\right)^{\beta}\widehat{\omega}(t),\quad 0\leq r\leq t<1;$$
    \item [(c)] there exist $C=C(\omega)>0$ and $\gamma_0=\gamma_0(\omega)>0$ such that for all $\gamma\geq\gamma_0$,
    $$\int_0^t\left(\frac{1-t}{1-r}\right)^{\gamma}\omega(r)dr\leq C\widehat{\omega}(t),\quad 0\leq t<1.$$
\end{enumerate}
\end{lemma}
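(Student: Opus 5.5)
The plan is to prove the cycle (a)$\Rightarrow$(b)$\Rightarrow$(c)$\Rightarrow$(a), writing $\widehat{\omega}(r)=\int_r^1\omega(s)\,ds$ as in the paper. We may assume $\int_0^1\omega>0$, since otherwise $\omega=0$ a.e. and the claim is trivial.

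\textbf{(a)$\Rightarrow$(b).} Fix $0\le r<1$ and $t\in[r,1)$. By Lemma~\ref{elementary}(b), taking $\beta=\beta_0$,
$$\widehat{\omega}(r)\le C\left(\frac{1-r}{1-t}\right)^{\beta_0}\widehat{\omega}(t).$$
Now choose $t=\frac{1+r}{2}$, so that $\frac{1-r}{1-t}=2$. This gives
$$\widehat{\omega}(r)\le C2^{\beta_0}\,\widehat{\omega}\left(\frac{1+r}{2}\right),$$
which is exactly the defining condition of $\DD$.

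\textbf{(b)$\Rightarrow$(c).} We iterate the doubling inequality. Set $r_0=r$ and $r_{n+1}=\frac{1+r_n}{2}$, so that $1-r_n=2^{-n}(1-r)$. Given $t\in[r,1)$, choose $n\ge0$ with $r_n\le t<r_{n+1}$. Since $\widehat{\omega}$ is nonincreasing,
$$\widehat{\omega}(r)\le C^{n+1}\widehat{\omega}(r_{n+1})\le C^{n+1}\widehat{\omega}(t).$$
From $t<r_{n+1}$ we get $1-t>2^{-n-1}(1-r)$, that is, $2^{n}<\frac{1-r}{1-t}$. Hence, with $\beta_0=\log_2 C$,
$$\widehat{\omega}(r)\le C\cdot 2^{n\log_2 C}\,\widehat{\omega}(t)\le C\left(\frac{1-r}{1-t}\right)^{\beta_0}\widehat{\omega}(t).$$
For any $\beta\ge\beta_0$ the same bound holds with $\beta$ in place of $\beta_0$, because $\frac{1-r}{1-t}\ge1$. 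If $C\le 1$, monotonicity already gives $\widehat{\omega}(r)=\widehat{\omega}(t)$, so we may take $C>1$.

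\textbf{(c)$\Rightarrow$(a) (the main step).} For $t\ge 1/2$, split $[0,t]$ into the dyadic pieces $I_k=[1-2^{k+1}(1-t),\,1-2^k(1-t)]$ for $0\le k\le N$, where $2^{N}(1-t)\asymp 1$. The last piece is truncated at $0$. On $I_k$ the factor $\left(\frac{1-t}{1-r}\right)^{\gamma}$ is at most $2^{-k\gamma}$. Also $\int_{I_k}\omega\le\widehat{\omega}(1-2^{k+1}(1-t))$, and (b), proved above, gives
$$\widehat{\omega}(1-2^{k+1}(1-t))\le C2^{(k+1)\beta_0}\,\widehat{\omega}(t).$$
Summing over $k$ yields
$$\int_0^t\left(\frac{1-t}{1-r}\right)^{\gamma}\omega(r)\,dr\lesssim\widehat{\omega}(t)\sum_{k}2^{k(\beta_0-\gamma)},$$
which is bounded provided $\gamma>\beta_0$. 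So we take $\gamma_0=\beta_0+1$.

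For $t<1/2$ we argue directly:
$$\int_0^t\left(\frac{1-t}{1-r}\right)^{\gamma}\omega(r)\,dr\le\int_0^1\omega=\widehat{\omega}(0)\le C\,\widehat{\omega}(1/2)\le C\,\widehat{\omega}(t),$$
using the doubling condition and the monotonicity of $\widehat{\omega}$.

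This only shows that (a) implies (c). To close the cycle we also need (c)$\Rightarrow$(a), and this is the hard part. I would first try the following. Integrate by parts, or use $\omega\,dr=-d\widehat{\omega}$, to get a lower bound for the left side of (c), for instance over $r\in[0,x]$ with $x=1-2(1-t)$. There $\frac{1-t}{1-r}\ge\frac{1-t}{1-0}$ is too weak, so instead take $r\in[1-K(1-t),\,1-2(1-t)]$, where the factor is at least $K^{-\gamma}$. Combined with (c) this gives
$$K^{-\gamma}\Big(\widehat{\omega}(1-K(1-t))-\widehat{\omega}(1-2(1-t))\Big)\le C\,\widehat{\omega}(t).$$
Hence $\widehat{\omega}(1-K(1-t))\le (CK^{\gamma}+1)\,\widehat{\omega}(1-2(1-t))\cdots$. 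This does not immediately give doubling, since the terms telescope in the wrong direction.

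A better route is to use the whole range $[0,t]$ with the weight $\left(\frac{1-t}{1-r}\right)^{\gamma}$. Define
$$\Phi(t)=\int_0^t\frac{\omega(r)}{(1-r)^{\gamma}}\,dr,$$
so that (c) reads $(1-t)^{\gamma}\Phi(t)\le C\,\widehat{\omega}(t)$. Comparing $\Phi$ at $t$ and at $\frac{1+t}{2}$, and using $\int_t^{(1+t)/2}\omega\ge\big(\frac{1-t}{2}\big)^{\gamma}\big(\Phi(\tfrac{1+t}{2})-\Phi(t)\big)$, should produce
$$\widehat{\omega}(t)-\widehat{\omega}\left(\tfrac{1+t}{2}\right)\ge c\,\widehat{\omega}\left(\tfrac{1+t}{2}\right)-\cdots.$$
I expect the right way to conclude is to derive a differential inequality of the form $\widehat{\omega}\gtrsim(1-t)^{\gamma}\Phi$ together with $\Phi'=\omega/(1-t)^{\gamma}$, and then extract doubling from it. I am not certain of the details of this step.
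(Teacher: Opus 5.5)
The paper does not prove this lemma; it quotes it from Pel\'aez. So your proposal has to stand on its own, and it does not: the implication (c)$\Rightarrow$(a) is never established, as you acknowledge at the end.

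Your three blocks prove something different from their headings:
\begin{itemize}
\item The block headed (a)$\Rightarrow$(b) applies hypothesis (b) with $t=\frac{1+r}{2}$, so it proves (b)$\Rightarrow$(a). Read as labelled, it would be circular, since it invokes item (b) of the lemma itself.
\item The block headed (b)$\Rightarrow$(c) iterates the doubling inequality, so it proves (a)$\Rightarrow$(b).
\item The block headed (c)$\Rightarrow$(a) sums over dyadic pieces using (b), so it proves (b)$\Rightarrow$(c).
\end{itemize}
These arguments are correct, with one slip. The bound $2^n\le\frac{1-r}{1-t}$ comes from $r_n\le t$, not from $t<r_{n+1}$; the latter gives the opposite bound $\frac{1-r}{1-t}<2^{n+1}$. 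So you have (a)$\Leftrightarrow$(b)$\Rightarrow$(c), but nothing leads back from (c).

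The missing step is elementary and needs no auxiliary function $\Phi$ or differential inequality. In your attempt you restricted the integral in (c) to $[1-K(1-t),\,1-2(1-t)]$. That discards exactly the part of $[0,t]$ next to $t$, where the kernel $\left(\frac{1-t}{1-s}\right)^{\gamma}$ is closest to $1$, and it is why an uncontrolled term $\widehat{\omega}(1-2(1-t))$ remained on the right.

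Instead, given $r\in[0,1)$, set $t=\frac{1+r}{2}$ and keep the interval $[r,t]$. There $1-s\le 1-r=2(1-t)$, so the kernel is at least $2^{-\gamma}$. Applying (c) with $\gamma=\gamma_0$ gives
$$2^{-\gamma_0}\big(\widehat{\omega}(r)-\widehat{\omega}(t)\big)\le\int_r^t\Big(\frac{1-t}{1-s}\Big)^{\gamma_0}\omega(s)\,ds\le C\,\widehat{\omega}(t),$$
that is, $\widehat{\omega}(r)\le\big(1+C2^{\gamma_0}\big)\,\widehat{\omega}\big(\frac{1+r}{2}\big)$. This is the doubling condition. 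With this step added and the headings corrected, the cycle of implications closes.
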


The following description of weights from $\check{\mathcal{D}}$ was established in the proof of \cite[Theorem 15]{PR21}.

\begin{lemma}[\cite{PR21}]\label{elementary-Dcheck}
Let $\omega$ be a radial weight. Then $\omega\in\check{\mathcal{D}}$ if and only if there exist $C=C(\omega)>0$ and $\alpha_0=\alpha_0(\omega)>0$ such that
$$\widehat{\omega}(t)\leq C\left(\frac{1-t}{1-r}\right)^{\alpha_0}\widehat{\omega}(r),\quad 0\leq r\leq t<1.$$
\end{lemma}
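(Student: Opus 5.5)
The plan is to prove both directions directly from the definition \eqref{check}. The forward direction iterates the one-step inequality along a geometric sequence of radii approaching $1$. The converse picks $K$ large enough that the power-type decay beats the constant.

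\emph{($\Rightarrow$)} Assume $\omega\in\check{\mathcal{D}}$ with constants $K>1$ and $C>1$ as in \eqref{check}. Fix $0\le r\le t<1$ and define radii $r_n:=1-\frac{1-r}{K^n}$ for $n\ge 0$, so that $r_0=r$ and $1-r_{n+1}=(1-r_n)/K$. Applying \eqref{check} at $r_n$ gives $\widehat{\omega}(r_{n+1})\le C^{-1}\widehat{\omega}(r_n)$. By induction, $\widehat{\omega}(r_n)\le C^{-n}\widehat{\omega}(r)$ for every $n$. Choose the integer $n\ge0$ with $r_n\le t<r_{n+1}$, which is equivalent to
$$K^n\le \frac{1-r}{1-t}<K^{n+1}.$$
Since $\widehat{\omega}$ is nonincreasing, $\widehat{\omega}(t)\le\widehat{\omega}(r_n)\le C^{-n}\widehat{\omega}(r)$. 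Set $\alpha_0:=\log_K C>0$, so that $C^{-n}=(K^{-n})^{\alpha_0}$. From $K^{-n}<K\,\frac{1-t}{1-r}$ we get
$$C^{-n}\le K^{\alpha_0}\left(\frac{1-t}{1-r}\right)^{\alpha_0}=C\left(\frac{1-t}{1-r}\right)^{\alpha_0}.$$
This yields the desired inequality with constant $C$ and exponent $\alpha_0$.

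\emph{($\Leftarrow$)} Assume $\widehat{\omega}(t)\le C\left(\frac{1-t}{1-r}\right)^{\alpha_0}\widehat{\omega}(r)$ for all $0\le r\le t<1$. Choose $K>1$ so large that $CK^{-\alpha_0}<1$, and take $t=1-\frac{1-r}{K}\ge r$. Then $\widehat{\omega}(1-\frac{1-r}{K})\le CK^{-\alpha_0}\widehat{\omega}(r)$, that is,
$$\widehat{\omega}(r)\ge \frac{K^{\alpha_0}}{C}\,\widehat{\omega}\left(1-\frac{1-r}{K}\right).$$
The constant $K^{\alpha_0}/C$ exceeds $1$, so \eqref{check} holds and $\omega\in\check{\mathcal{D}}$.

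There is no real obstacle here. The only point needing care is the bookkeeping in the forward direction: choosing $n$ so that $t$ lies between consecutive radii $r_n$ and $r_{n+1}$. This converts the discrete decay $C^{-n}$ into the continuous power $((1-t)/(1-r))^{\alpha_0}$ at the cost of the harmless factor $K^{\alpha_0}=C$.
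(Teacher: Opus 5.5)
Your proof is correct in both directions. The paper gives no proof of its own here and simply cites \cite[Theorem 15]{PR21}; your argument is the standard one behind that result: iterate \eqref{check} along the radii $1-(1-r)K^{-n}$ and use that $\widehat{\omega}$ is nonincreasing for the forward direction, and take $t=1-\frac{1-r}{K}$ with $K$ large for the converse. Your exponent $\alpha_0=\log C/\log K$ is the same quantity the paper uses as $\tau_0(\omega)$ in the proof of Lemma \ref{gFR}.
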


Based on Lemmas \ref{elementary}(b) and \ref{elementary-Dcheck}, we obtain that if $\omega\in\DDD$, then for any $r\in(0,1)$, there exists $C=C(\omega,r)\geq1$ such that
\begin{equation}\label{D-constant}
C^{-1}\widehat{\omega}(u)\leq \widehat{\omega}(z)\leq C\widehat{\omega}(u)\quad \mathrm{whenever} \quad d(z,u)<r.
\end{equation}

The following integral estimate can be found in \cite[Lemma 5]{PPR20}.

\begin{lemma}[\cite{PPR20}]\label{hat-equivalent}
Let $\alpha\geq0$, $0<p<\infty$ and $\omega\in\DDD$. Then for any analytic function $f$ on $\D$,
$$\int_{\D}|f(z)|^p(1-|z|)^{\alpha}\omega(z)dA(z)
\asymp\int_{\D}|f(z)|^p(1-|z|)^{\alpha-1}\widehat{\omega}(z)dA(z).$$
\end{lemma}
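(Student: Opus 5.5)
The plan is to reduce both sides to one-dimensional integrals of integral means and then compare them dyadically. Writing $M_p^p(r,f):=\frac{1}{2\pi}\int_0^{2\pi}|f(re^{it})|^p\,dt$, polar coordinates give
$$\int_{\D}|f|^p(1-|z|)^{\alpha}\omega\,dA=2\pi\int_0^1 M_p^p(r,f)(1-r)^{\alpha}\omega(r)\,r\,dr,$$
and similarly for the right-hand side with $(1-r)^{\alpha-1}\widehat{\omega}(r)$ in place of $(1-r)^\alpha\omega(r)$. The only property of $f$ to be used is that $r\mapsto M_p(r,f)$ is nondecreasing, which holds because $|f|^p$ is subharmonic for every $0<p<\infty$. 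The factor $r$ only matters near $0$. There I would use monotonicity to bound the part over $[0,1/2]$ by a multiple of the part over $[1/2,3/4]$, on both sides. So I may restrict all integrals to $[1/2,1)$ and ignore the factor $r$.

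\emph{The bound ``$\lesssim$'', using only $\omega\in\DD$.} Let $r_n=1-2^{-n}$. On $[r_n,r_{n+1}]$ I use $M_p^p(r,f)\le M_p^p(r_{n+1},f)$, $(1-r)^\alpha\le 2^{-n\alpha}$ (here $\alpha\ge0$ is used) and $\int_{r_n}^{r_{n+1}}\omega\le\widehat{\omega}(r_n)$. Together these bound the $n$-th piece of the left side by $M_p^p(r_{n+1},f)\,2^{-n\alpha}\,\widehat{\omega}(r_n)$. On $[r_{n+1},r_{n+2}]$ the right-hand integrand satisfies three bounds:
\begin{itemize}
\item $M_p^p(s,f)\ge M_p^p(r_{n+1},f)$;
\item $(1-s)^{\alpha-1}\asymp 2^{-n(\alpha-1)}$;
\item $\widehat{\omega}(s)\ge\widehat{\omega}(r_{n+2})\gtrsim\widehat{\omega}(r_n)$, by applying the defining doubling inequality of $\DD$ twice.
\end{itemize}
The interval $[r_{n+1},r_{n+2}]$ has length $\asymp 2^{-n}$. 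Hence the $n$-th left piece is bounded by a constant times $\int_{r_{n+1}}^{r_{n+2}}M_p^p(s,f)(1-s)^{\alpha-1}\widehat{\omega}(s)\,ds$, and summing over $n$ gives the claim.

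\emph{The bound ``$\gtrsim$'', which is where $\check{\mathcal D}$ enters and is the main step.} Set $\rho(r)=1-\frac{1-r}{K}$ with $K,C$ from \eqref{check}. Then \eqref{check} gives
$$\widehat{\omega}(r)-\widehat{\omega}(\rho(r))\ge(1-C^{-1})\widehat{\omega}(r),$$
that is, $\widehat{\omega}(r)\lesssim\int_r^{\rho(r)}\omega(s)\,ds$. Inserting this into the right-hand side and applying Fubini yields
$$\int_{1/2}^1 M_p^p(r,f)(1-r)^{\alpha-1}\widehat{\omega}(r)\,dr\lesssim\int_{1/2}^1\omega(s)\int_{\max(1/2,\,1-K(1-s))}^{s}M_p^p(r,f)(1-r)^{\alpha-1}\,dr\,ds.$$
In the inner integral $M_p^p(r,f)\le M_p^p(s,f)$ and $1-s\le1-r\le K(1-s)$, so $\int_{1-K(1-s)}^s(1-r)^{\alpha-1}dr\asymp_{K,\alpha}(1-s)^\alpha$; for $\alpha=0$ this integral equals $\log K$. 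This bounds the right side by a constant times the left side. The delicate point is exactly this use of \eqref{check} to localize $\widehat{\omega}(r)$ to mass of $\omega$ in a window comparable to $1-r$. Without it, $\widehat{\omega}$ could be dominated by mass of $\omega$ concentrated very near $1$, where $(1-s)^\alpha$ is much smaller.
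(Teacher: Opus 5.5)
The paper gives no proof of this lemma; it imports it from \cite[Lemma 5]{PPR20}. Your argument is a correct, self-contained replacement. After passing to the nondecreasing integral means $M_p^p(r,f)$, the inequality ``$\lesssim$'' follows from the dyadic shift $[r_n,r_{n+1}]\to[r_{n+1},r_{n+2}]$ together with $\widehat{\omega}(r_n)\le C^2\widehat{\omega}(r_{n+2})$. The inequality ``$\gtrsim$'' follows from $\widehat{\omega}(r)\le\frac{C}{C-1}\int_r^{\rho(r)}\omega(s)\,ds$ and Fubini. Both the Fubini limits $\max\{1/2,1-K(1-s)\}\le r\le s$ and the evaluation $\int_{1-K(1-s)}^{s}(1-r)^{\alpha-1}dr=(1-s)^{\alpha}\int_1^K u^{\alpha-1}du$ are correct. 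Compared with the bare citation, your route makes two things visible. First, the only property of $f$ used is the monotonicity of $M_p(r,f)$, so the estimate holds for any $f$ with $|f|^p$ subharmonic. Second, ``$\lesssim$'' needs only $\omega\in\DD$, while $\check{\mathcal{D}}$ enters only in ``$\gtrsim$''.

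One step is misstated, though it is easily repaired. On the left-hand side, the part over $[0,1/2]$ need not be dominated by the part over $[1/2,3/4]$. The weight $\omega=\chi_{[0,1/2]}+\chi_{[3/4,1)}$ lies in $\DDD$ (for instance with $K=8$, $C=2$ in \eqref{check}). Yet for $f\equiv1$ the left-hand integral over $[1/2,3/4]$ is zero, while the integral over $[0,1/2]$ is not. The fix is to bound the left-hand part over $[0,1/2]$ by $M_p^p(1/2,f)\,\widehat{\omega}(0)$ and compare it with the \emph{right-hand} part over $[1/2,3/4]$. That part is $\gtrsim M_p^p(1/2,f)\,\widehat{\omega}(3/4)\gtrsim M_p^p(1/2,f)\,\widehat{\omega}(0)$ by the doubling property. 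Your reduction on the right-hand side is fine as written, since $\widehat{\omega}$ is positive and nonincreasing.
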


The following lemma establishes some generalized Forelli--Rudin-type estimates involving doubling weights, which will play an important role in the sequel.

\begin{lemma}\label{gFR}
Let $s,\gamma>1$, $\delta\in\mathbb{R}$, and let $\omega$ is a radial weight. For $a,z\in\D$, consider the integral
$$I(a,z):=\int_{\D}\frac{(1-|u|)^{-\delta}\omega(u)dA(u)}{|1-\overline{a}u|^s|1-\overline{z}u|^{\gamma}}.$$
\begin{enumerate}
    \item [(1)] If $\omega\in\DD$, $\gamma>\max\{\gamma_0(\omega),1\}$ and $\delta=0$, then
    $$I(a,z)\lesssim\frac{\widehat{\omega}(z)}{(1-|a|)^{s-1}(1-|z|)^{\gamma}}.$$
    \item [(2)] If $\omega\in\DDD$ and $\gamma\geq m+1+\gamma_0(\omega)$ for some nonnegative integer $m$, then there exists $\delta_0=\delta_0(\omega)>0$ such that for $s\in(1,1+\delta_0)$ and $\delta\in[-m,\delta_0)$,
    $$I(a,z)\lesssim\frac{\widehat{\omega}(z)}{|1-\overline{a}z|^s(1-|z|)^{\gamma+\delta-1}}.$$
\end{enumerate}
\end{lemma}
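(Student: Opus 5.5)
Both parts rest on a single manoeuvre: split $\D$ according to whether $|u|$ lies below or above $\max\{|a|,|z|\}$ (or a comparable radius), bound the two kernels separately in the resulting regions, and integrate in polar coordinates. Since $\omega$ is radial, the angular integrals are controlled by the classical estimate
$$\int_0^{2\pi}\frac{d\theta}{|1-\rho e^{i\theta}|^{t}}\asymp(1-\rho)^{1-t},\qquad t>1.$$
The remaining radial integrals are then handled with Lemma \ref{elementary} (for $\DD$) and Lemma \ref{elementary-Dcheck} (for $\check{\mathcal D}$).

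\emph{Part (1).} We use the trivial bound $|1-\overline{a}u|\geq 1-|a|$ only where it is sharp. On the region $|u|\leq|a|$ we also have $|1-\overline{a}u|\geq 1-|u|\geq 1-|a|$. The cleaner route, however, is Hölder-free: by symmetry we bound $|1-\overline{a}u|^{-s}$ by $(1-|a|)^{-(s-1)}|1-\overline{a}u|^{-1}$. This is too weak for the angular integral, so instead we proceed as follows. Write $u=\rho e^{i\theta}$. Split the angular integral over the arcs where $|1-\overline{a}u|\ge|1-\overline{z}u|$ and where the reverse holds.
\begin{itemize}
\item On the first arc, the integrand is at most $|1-\overline{a}u|^{-s}|1-\overline{z}u|^{-\gamma}\leq (1-|a|)^{-(s-1)}\,|1-\overline{z}u|^{-(\gamma+1)}\cdot\frac{|1-\overline z u|}{|1-\overline a u|}$, which is bounded by $(1-|a|)^{1-s}|1-\overline{z}u|^{-\gamma-1}$, using $|1-\overline a u|\ge 1-|a|$.
\item On the second arc, the same argument gives $(1-|a|)^{1-s}|1-\overline{a}u|^{-\gamma-1}$, whose angular integral is $\lesssim(1-|a|)^{1-s}(1-\rho|a|)^{-\gamma}$.
\end{itemize}
For the first arc, the angular integral of $|1-\overline z u|^{-\gamma-1}$ gives $(1-\rho|z|)^{-\gamma}$, and then
$$\int_0^1\frac{\omega(\rho)\,d\rho}{(1-\rho|z|)^{\gamma}}\lesssim\frac{\widehat\omega(z)}{(1-|z|)^\gamma}.$$
This holds by splitting at $\rho=|z|$: Lemma \ref{elementary}(c) handles $\rho<|z|$ since $\gamma>\gamma_0$, and the range $\rho\ge|z|$ contributes $\widehat\omega(z)(1-|z|)^{-\gamma}$ directly. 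The second arc is the delicate one, since it produces $(1-|a|)$ rather than $(1-|z|)$. To avoid it, we observe that on this arc we have $|1-\overline z u|\ge|1-\overline a u|$, so it suffices to keep the $z$-kernel, bounding $|1-\overline a u|^{-s}\le(1-|a|)^{-s+1}|1-\overline a u|^{-1}$ and $|1-\overline a u|^{-1}\cdot|1-\overline z u|^{-\gamma}\le|1-\overline z u|^{-\gamma}\cdot|1-\overline z u|^{-1}\cdot\frac{|1-\overline z u|}{|1-\overline a u|}$. This is fiddly; a simpler uniform choice is to use the inequality $\frac{1}{xy^\gamma}\le\frac1{x^{\gamma+1}}+\frac1{y^{\gamma+1}}$ for the pair of exponents $(1,\gamma)$ only after extracting $(1-|a|)^{1-s}$. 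The $a$-term $\int\omega(\rho)(1-\rho|a|)^{-\gamma}d\rho\lesssim\widehat\omega(a)(1-|a|)^{-\gamma}$ must then be compared with the target. I expect the cleanest fix to be to extract $(1-|a|)^{1-s}$ and then apply the standard estimate
$$\int_{\T}\frac{|d\xi|}{|1-\overline a\rho\xi|\,|1-\overline z\rho\xi|^{\gamma}}\lesssim\frac{\log\frac{e}{1-\rho}}{(1-\rho|z|)^{\gamma}},$$
with the logarithm absorbed by reducing $\gamma$ slightly above $\gamma_0$. This is permissible since we may assume $\gamma$ is strictly larger than $\max\{\gamma_0,1\}$.

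\emph{Part (2).} Here we need the full kernel $|1-\overline a z|^{-s}$, so the split is geometric. Let $E_1=\{u:|1-\overline a u|\ge\frac12|1-\overline a z|\}$ and $E_2=\{u:|1-\overline z u|\gtrsim|1-\overline a z|\}$; since $|1-\overline a z|\lesssim|1-\overline a u|+|1-\overline z u|$, these cover $\D$.
\begin{itemize}
\item On $E_1$, we pull out $|1-\overline a z|^{-s}$. What remains is $\int(1-|u|)^{-\delta}\omega(u)|1-\overline z u|^{-\gamma}dA(u)$, which by the polar argument is $\lesssim\int_0^1(1-\rho)^{-\delta}\omega(\rho)(1-\rho|z|)^{1-\gamma}d\rho$. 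For $\rho<|z|$ we use Lemma \ref{elementary}(c), which is valid because $\gamma-1+\delta\ge\gamma-1-m\ge\gamma_0$ after writing $(1-\rho)^{-\delta}(1-\rho|z|)^{1-\gamma}\le(1-|z|)^{-\delta}(\ldots)$ when $\delta\le0$ (and directly otherwise). For $\rho\ge|z|$ we need $\int_{|z|}^1(1-\rho)^{-\delta}\omega(\rho)d\rho\lesssim(1-|z|)^{-\delta}\widehat\omega(z)$. When $\delta\ge 0$ with $\delta<\delta_0$, this requires $\check{\mathcal D}$: integrate by parts and use Lemma \ref{elementary-Dcheck}, which gives $\widehat\omega(t)\lesssim((1-t)/(1-|z|))^{\alpha_0}\widehat\omega(z)$, so $\delta_0\le\alpha_0$ suffices.
\item On $E_2$, we pull out $|1-\overline a z|^{-\gamma}\lesssim|1-\overline a z|^{-s}(1-|z|)^{s-\gamma}$ (using $|1-\overline a z|\gtrsim 1-|z|$ and $\gamma>s$). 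What remains is $\int(1-|u|)^{-\delta}\omega(u)|1-\overline a u|^{-s}dA(u)\lesssim\int_0^1(1-\rho)^{-\delta}\omega(\rho)(1-\rho|a|)^{1-s}d\rho$. Since $s-1<\delta_0$ is small, this is bounded by $\int_0^1(1-\rho)^{1-s-\delta}\omega(\rho)\,d\rho$, which is finite and $\lesssim$ a constant; this too uses $\check{\mathcal D}$, provided $\delta+s-1<\alpha_0$.
\end{itemize}
The resulting bound is $|1-\overline az|^{-s}(1-|z|)^{s-\gamma}$. Comparing with the target $\widehat\omega(z)(1-|z|)^{1-\gamma-\delta}$ requires $(1-|z|)^{s-1+\delta}\lesssim\widehat\omega(z)$, which is true by Lemma \ref{elementary}(b) with $r=0$ only if $s-1+\delta\ge\beta_0$ — this fails. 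So $E_2$ must be refined by further splitting into $|u|\le|z|$ and $|u|>|z|$, using on $|u|>|z|$ the bound $\omega$-mass $\approx\widehat\omega(z)$.

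I expect this refinement of the $E_2$ part, and the exact choice of $\delta_0$ (roughly $\min\{\alpha_0,\cdot\}/2$), to be the main obstacle.
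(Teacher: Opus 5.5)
Both halves of your plan break down at the step that actually carries the estimate.

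\emph{Part (1).} Your final route is to extract $(1-|a|)^{1-s}$ and accept
$\int_{\T}|1-\overline{a}\rho\xi|^{-1}|1-\overline{z}\rho\xi|^{-\gamma}|d\xi|\lesssim(1-\rho|z|)^{-\gamma}\log\frac{e}{1-\rho}$. This cannot give the lemma. It leaves exponent exactly $1$ on the $a$-kernel, which is the critical case, and no choice of $\gamma$ absorbs the resulting logarithm.

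At $z=0$ your bound reads $(1-|a|)^{1-s}\int_0^1\omega(\rho)\log\frac{e}{1-\rho|a|}\,d\rho$. Take the weight with $\widehat{\omega}(r)=1/\log\frac{e}{1-r}$, which belongs to $\DD$. Then $\int_0^1\omega(\rho)\log\frac{e}{1-\rho}\,d\rho=\int_0^1\frac{d\rho}{(1-\rho)\log\frac{e}{1-\rho}}=\infty$. So the bound is not $O\big((1-|a|)^{1-s}\big)$ as $|a|\to1$, and $\gamma$ plays no role at $z=0$. Even for $\omega\equiv1$, the range $2|z|-1\le\rho\le|z|$ alone makes your radial integral exceed $\widehat{\omega}(z)(1-|z|)^{-\gamma}$ by a factor $\gtrsim\log\frac{e}{1-|z|}$.

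The repair is easy, and your first bullet already contains half of it. On the arc where $|1-\overline{a}u|<|1-\overline{z}u|$, do not move powers onto the $a$-kernel. Instead bound $|1-\overline{z}u|^{-\gamma}\le(1-\rho|z|)^{-\gamma}$ and integrate $|1-\overline{a}u|^{-s}$ in $\theta$; since $s>1$ this gives $(1-\rho|a|)^{1-s}\le(1-|a|)^{1-s}$. Both arcs then contribute $\lesssim(1-|a|)^{1-s}(1-\rho|z|)^{-\gamma}$, and your radial argument finishes. This is what the paper extracts from the two-kernel estimate \cite[Proposition 3.2]{ZGCT},
$$\int_{\T}\frac{|d\xi|}{|1-r\overline{a}\xi|^s|1-r\overline{z}\xi|^{\gamma}}\asymp\frac{1}{(1-r^2|a|^2)^{s-1}|1-r^2\overline{a}z|^{\gamma}}+\frac{1}{(1-r^2|z|^2)^{\gamma-1}|1-r^2\overline{a}z|^{s}}.$$

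\emph{Part (2).} Your $E_1$ estimate is fine. The aside that $(1-\rho)^{-\delta}\le(1-|z|)^{-\delta}$ for $\delta\le0$ is backwards when $\rho<|z|$, but you do not need it: combine exponents into $(1-\rho)^{1-\gamma-\delta}$ and apply Lemma~\ref{elementary}(c), using $\gamma+\delta-1\ge\gamma_0$.

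The $E_2$ estimate, however, is left open, and the refinement you sketch misplaces the loss. The loss occurs on $|u|\le|z|$. It comes from replacing all of $|1-\overline{z}u|^{-\gamma}$ by $|1-\overline{a}z|^{-\gamma}$, which discards the decay in $|u|$. The fix on $E_2$ has three parts:
\begin{itemize}
\item extract only $|1-\overline{a}z|^{-s}$;
\item keep $|1-\overline{z}u|^{s-\gamma}\le(1-\rho|z|)^{s-\gamma}$ (as you noted, $\gamma>s$);
\item use $\int_0^{2\pi}|1-\overline{a}\rho e^{i\theta}|^{-s}\,d\theta\asymp(1-\rho|a|)^{1-s}\le(1-\rho)^{1-s}$.
\end{itemize}
What remains is
$$\int_0^1(1-\rho)^{1-s-\delta}(1-\rho|z|)^{s-\gamma}\omega(\rho)\,d\rho.$$
\begin{itemize}
\item Over $[0,|z|)$ this is at most $\int_0^{|z|}(1-\rho)^{1-\gamma-\delta}\omega(\rho)\,d\rho\lesssim\widehat{\omega}(z)(1-|z|)^{1-\gamma-\delta}$, by Lemma~\ref{elementary}(c).
\item Over $[|z|,1)$ it is at most $(1-|z|)^{s-\gamma}\int_{|z|}^1(1-\rho)^{1-s-\delta}\omega(\rho)\,d\rho$. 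Integration by parts together with Lemma~\ref{elementary-Dcheck} gives $\int_{|z|}^1(1-\rho)^{1-s-\delta}\omega(\rho)\,d\rho\lesssim\widehat{\omega}(z)(1-|z|)^{1-s-\delta}$, provided $s-1+\delta<\alpha_0$.
\end{itemize}

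So on $|u|>|z|$ the key is not that the $\omega$-mass there is about $\widehat{\omega}(z)$. It is the $\check{\mathcal{D}}$-integrability of the singular factor $(1-\rho)^{1-s-\delta}$ produced by the $a$-kernel. This is exactly why $s-1$ and $\delta$ are both kept below $\delta_0\le\alpha_0/2$.

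With this step supplied (and the Part (1) repair), your $E_1/E_2$ split is a valid hands-on substitute for the paper's argument. The paper's treatment of $I_1$ is the same computation: pull out $|1-\overline{a}z|^{-s}$ via $|1-\overline{a}z|\le2|1-r^2\overline{a}z|$, bound $(1-r^2|a|^2)^{1-s}\le(1-r)^{1-s}$, and split at $r=|z|$.
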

\begin{proof}
Since $s,\gamma>1$, we can use \cite[Proposition 3.2]{ZGCT} to obtain that
\begin{align*}
I(a,z)&=\int_{\D}\frac{(1-|u|)^{-\delta}\omega(u)dA(u)}{|1-\overline{a}u|^s|1-\overline{z}u|^{\gamma}}\\
&\leq\int_0^1\int_{\T}\frac{|d\xi|}{|1-r\overline{a}\xi|^s|1-r\overline{z}\xi|^{\gamma}}(1-r)^{-\delta}\omega(r)dr\\
&\asymp\int_0^1\frac{(1-r)^{-\delta}\omega(r)dr}{(1-r^2|a|^2)^{s-1}|1-r^2\overline{a}z|^{\gamma}}+\int_0^1\frac{(1-r)^{-\delta}\omega(r)dr}{(1-r^2|z|^2)^{\gamma-1}|1-r^2\overline{a}z|^s}\\
&=:I_1(a,z)+I_2(a,z).
\end{align*}

(1) If $\omega\in\DD$, $\gamma>\max\{\gamma_0(\omega),1\}$ and $\delta=0$, then by Lemma \ref{elementary}(c),
\begin{align*}
I_1(a,z)&\leq\frac{1}{(1-|a|)^{s-1}}\left(\int_0^{|z|}+\int_{|z|}^1\right)\frac{\omega(r)dr}{(1-r|z|)^{\gamma}}\\
&\leq\frac{1}{(1-|a|)^{s-1}}\left(\int_0^{|z|}\frac{\omega(r)dr}{(1-r)^{\gamma}}+\frac{\widehat{\omega}(z)}{(1-|z|)^{\gamma}}\right)\\
&\lesssim\frac{\widehat{\omega}(z)}{(1-|a|)^{s-1}(1-|z|)^{\gamma}}.
\end{align*}
Similarly,
\begin{align*}
I_2(a,z)&\leq\frac{1}{(1-|a|)^{s-1}}\left(\int_0^{|z|}+\int_{|z|}^1\right)\frac{\omega(r)dr}{(1-r|z|)^{\gamma}}\\
&\lesssim\frac{\widehat{\omega}(z)}{(1-|a|)^{s-1}(1-|z|)^{\gamma}}.
\end{align*}

(2) Let $\omega\in\DDD$ and $\gamma\geq m+1+\gamma_0(\omega)$ for some nonnegative integer $m$. We first claim that there exists $\tau_0=\tau_0(\omega)>0$ such that for any $\tau\in(0,\tau_0)$,
\begin{equation}\label{hat-integral}
\int_{|z|}^1\frac{\widehat{\omega}(r)dr}{(1-r)^{1+\tau}}\lesssim\frac{\widehat{\omega}(z)}{(1-|z|)^{\tau}}.
\end{equation}
In fact, since $\omega\in\DDD\subset\check{\mathcal{D}}$, there exist $C=C(\omega)>1$ and $K=K(\omega)>1$ such that \eqref{check} holds for any $0\leq r<1$.
Choose $\tau_0(\omega)=\frac{\log C(\omega)}{\log K(\omega)}$. Then for any $\tau\in(0,\tau_0)$, $K(\omega)^{\tau}<C(\omega)$, and we have
$$\int_{|z|}^1\frac{\widehat{\omega}(r)dr}{(1-r)^{1+\tau}}\geq C\int_{|z|}^1\frac{\widehat{\omega}\left(1-\frac{1-r}{K}\right)}{(1-r)^{1+\tau}}dr
=\frac{C}{K^{\tau}}\int_{1-\frac{1-|z|}{K}}^1\frac{\widehat{\omega}(r)dr}{(1-r)^{1+\tau}},$$
which implies that
\begin{align*}
\int_{|z|}^1\frac{\widehat{\omega}(r)dr}{(1-r)^{1+\tau}}
&=\left(\int_{|z|}^{1-\frac{1-|z|}{K}}+\int_{1-\frac{1-|z|}{K}}^1\right)\frac{\widehat{\omega}(r)dr}{(1-r)^{1+\tau}}\\
&\leq\int_{|z|}^{1-\frac{1-|z|}{K}}\frac{\widehat{\omega}(r)dr}{(1-r)^{1+\tau}}+\frac{K^{\tau}}{C}\int_{|z|}^1\frac{\widehat{\omega}(r)dr}{(1-r)^{1+\tau}}\\
&\leq\frac{K^{\tau}-1}{\tau}\frac{\widehat{\omega}(z)}{(1-|z|)^{\tau}}+\frac{K^{\tau}}{C}\int_{|z|}^1\frac{\widehat{\omega}(r)dr}{(1-r)^{1+\tau}}.
\end{align*}
That is,
$$\int_{|z|}^1\frac{\widehat{\omega}(r)dr}{(1-r)^{1+\tau}}\leq\frac{C(K^{\tau}-1)}{\tau(C-K^{\tau})}\frac{\widehat{\omega}(z)}{(1-|z|)^{\tau}}$$
and \eqref{hat-integral} is established.

We now denote $\delta_0=\delta_0(\omega)=\frac{1}{2}\min\{\alpha_0(\omega),\gamma_0(\omega),\tau_0(\omega)\}$ and show the desired estimate holds for $s\in(1,1+\delta_0)$ and $\delta\in[-m,\delta_0)$. To this end, note that for any $r\in(0,1)$,
\begin{align}\label{rrr}
       |1-\overline{a}z| &\leq |1-r^2\overline{a}z| + |r^2\overline{a}z - \overline{a}z| \leq |1-r^2\overline{a}z| + (1-r^2)\nonumber \\
        &\leq |1-r^2\overline{a}z| + (1-r^2|a||z|) \leq 2|1-r^2\overline{a}z|.
\end{align}
Since $\gamma\geq m+1+\gamma_0(\omega)$ and $\delta\in[-m,\delta_0)$, we can apply \eqref{rrr} and Lemma \ref{elementary}(c) to obtain that
\begin{align*}
I_2(a,z)&=\int_0^1\frac{(1-r)^{-\delta}\omega(r)dr}{(1-r^2|z|^2)^{\gamma-1}|1-r^2\overline{a}z|^s}\\
&\lesssim\frac{1}{|1-\overline{a}z|^s}\left(\int_0^{|z|}\frac{\omega(r)dr}{(1-r)^{\gamma+\delta-1}}
    +\frac{1}{(1-|z|)^{\gamma-1}}\int_{|z|}^1\frac{\omega(r)dr}{(1-r)^{\delta}}\right)\\
&\lesssim\frac{1}{|1-\overline{a}z|^s}\left(\frac{\widehat{\omega}(z)}{(1-|z|)^{\gamma+\delta-1}}
    +\frac{1}{(1-|z|)^{\gamma-1+\delta-\delta_0}}\int_{|z|}^1\frac{\omega(r)dr}{(1-r)^{\delta_0}}\right).
\end{align*}
Integrating by parts and applying Lemma \ref{elementary-Dcheck} and \eqref{hat-integral}, we obtain that
$$\int_{|z|}^1\frac{\omega(r)dr}{(1-r)^{\delta_0}}=\frac{\widehat{\omega}(z)}{(1-|z|)^{\delta_0}}+\delta_0\int_{|z|}^1\frac{\widehat{\omega}(r)}{(1-|z|)^{1+\delta_0}}dr\lesssim\frac{\widehat{\omega}(z)}{(1-|z|)^{\delta_0}},$$
which gives that
$$I_2(a,z)\lesssim\frac{\widehat{\omega}(z)}{|1-\overline{a}z|^s(1-|z|)^{\gamma+\delta-1}}.$$
It remains to estimate $I_1(a,z)$. Noting that $s+\delta_0-1<\min\{\alpha_0(\omega),\tau_0(\omega)\}$ and $s<\gamma$, integrating by parts and using Lemma \ref{elementary-Dcheck} and \eqref{hat-integral}, we obtain that
\begin{align*}
&\int_{|z|}^1\frac{(1-r)^{-\delta}\omega(r)dr}{(1-r|a|)^{s-1}(1-r|a||z|)^{\gamma-s}}\\
&\leq\frac{1}{(1-|z|)^{\gamma-s+\delta-\delta_0}}\int_{|z|}^1\frac{\omega(r)dr}{(1-r)^{s+\delta_0-1}}\\
&=\frac{1}{(1-|z|)^{\gamma-s+\delta-\delta_0}}\left(\frac{\widehat{\omega}(z)}{(1-|z|)^{s+\delta_0-1}}+(s+\delta_0-1)\int_{|z|}^1\frac{\widehat{\omega}(r)dr}{(1-r)^{s+\delta_0}}\right)\\
&\lesssim\frac{\widehat{\omega}(z)}{(1-|z|)^{\gamma+\delta-1}}.
\end{align*}
This, together with \eqref{rrr} and Lemma \ref{elementary}(c), gives that
\begin{align*}
I_1(a,z)&=\int_0^1\frac{(1-r)^{-\delta}\omega(r)dr}{(1-r^2|a|^2)^{s-1}|1-r^2\overline{a}z|^{\gamma}}\\
&\lesssim\frac{1}{|1-\overline{a}z|^s}\left(\int_0^{|z|}+\int_{|z|}^1\right)\frac{(1-r)^{-\delta}\omega(r)dr}{(1-r|a|)^{s-1}(1-r|a||z|)^{\gamma-s}}\\
&\lesssim\frac{1}{|1-\overline{a}z|^s}\left(\int_0^{|z|}\frac{\omega(r)dr}{(1-r)^{\gamma+\delta-1}}+\frac{\widehat{\omega}(z)}{(1-|z|)^{\gamma+\delta-1}}\right)\\
&\lesssim\frac{\widehat{\omega}(z)}{|1-\overline{a}z|^s(1-|z|)^{\gamma+\delta-1}},
\end{align*}
which finishes the proof.
\end{proof}

\section{Atomic decomposition}\label{atomic-decomposition}

In this section, we are going to establish the atomic decomposition for the analytic tent spaces $AT^{\infty}_q(\omega)$ and $AT^0_q(\omega)$ induced by $\omega\in\DDD$, which will be useful when we deal with the embedding theorems.

Before proceeding, recall that for $r\in(0,1)$, a sequence $Z=\{a_k\}\subset\D$ is said to be an $r$-lattice (in the pseudo-hyperbolic metric) if $\D=\bigcup_{k\geq1}\Delta(a_k,r)$ and $d(a_k,a_j)\geq r/2$ for $k\neq j$. By \cite[Lemma 4.8]{Zh07}, $r$-lattice exists for any $r\in(0,1)$. Moreover, there exists a positive integer $N$ such that for any $r$-lattice $Z=\{a_k\}$ with $r\in(0,1/4)$, each point $z\in\D$ belongs to at most $N$ of the sets $\Delta(a_k,2r)$.

Given an $r$-lattice $Z=\{a_k\}$ and $0<q<\infty$, the sequence tent space $T^{\infty}_q(Z)$ consists of sequences $c=\{c_k\}$ such that
$$\|c\|_{T^{\infty}_q(Z)}^q:
=\esssup_{\xi\in\T}\left(\sup_{a\in\Gamma(\xi)}\frac{1}{1-|a|}\sum_{a_k\in S(a)}|c_k|^q(1-|a_k|)\right)<\infty.$$
Analogously, $T^0_q(Z)$ is the subspace of $T^{\infty}_q(Z)$ consisting of sequences $c=\{c_k\}$ with
$$\lim_{|a|\to1^-}\frac{1}{1-|a|}\sum_{a_k\in S(a)}|c_k|^q(1-|a_k|)=0.$$
As before, we have that a sequence $c=\{c_k\}$ belongs to $T^{\infty}_q(Z)$ (resp. $T^0_q(Z)$) if and only if the measure $d\nu_c:=\sum_{k}|c_k|^q(1-|a_k|)\delta_{a_k}$ is a (resp. vanishing) Carleson measure, where $\delta_{a_k}$ is the Dirac point mass at $a_k$. Moreover, $\|c\|_{T^{\infty}_q(Z)}=\|\nu_c\|_{CM}^{1/q}$.

The main result of this section is as follows, which establishes the atomic decomposition for the analytic tent spaces $AT^{\infty}_q(\omega)$.

\begin{theorem}\label{test-function2}
Let $\omega\in\DDD$, $0<q<\infty$, $\gamma>\big(1+\gamma_0(\omega)\big)\max\left\{1,1/q\right\}$, and let $Z=\{a_k\}$ be an $r$-lattice for some $r\in(0,1/4)$. Then
\begin{enumerate}
    \item [(1)] for any $c=\{c_k\}\in T^{\infty}_q(Z)$, the function
    $$f(z)=\sum_{k\geq1}c_k\frac{(1-|a_k|)^{\gamma}}{\widehat{\omega}(a_k)^{\frac{1}{q}}(1-\overline{a_k}z)^{\gamma}},\quad z\in\D$$
    belongs to $AT^{\infty}_q(\omega)$, and $\|f\|_{AT^{\infty}_q(\omega)}\lesssim\|c\|_{T^{\infty}_q(Z)}$;
    \item [(2)] there exists an $r'$-lattice $Z'=\{z_k\}$ for some sufficiently small $r'\in(0,1/4)$ such that, any $f\in AT^{\infty}_q(\omega)$ has the form
    $$f(z)=\sum_{k\geq1}c_k\frac{(1-|z_k|)^{\gamma}}{\widehat{\omega}(z_k)^{\frac{1}{q}}(1-\overline{z_k}z)^{\gamma}},$$
    where $c=\{c_k\}\in T^{\infty}_q(Z')$ with $\|c\|_{T^{\infty}_q(Z')}\lesssim\|f\|_{AT^{\infty}_q(\omega)}$.
\end{enumerate}
\end{theorem}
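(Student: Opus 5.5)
The plan for both parts is to reduce every $T^{\infty}_q$ estimate to the kernel characterization of Carleson measures from Section \ref{preliminary}. That is, fix a small $s>0$ and bound
$$\int_{\D}\frac{(1-|a|)^{s}}{|1-\overline{a}z|^{s+1}}|f(z)|^q\omega(z)\,dA(z)$$
uniformly in $a\in\D$. The integral will be controlled by the generalized Forelli--Rudin estimate Lemma \ref{gFR}(2), always used with $m=0$ and with the exponent $s+1$ in place of $s$. Choosing $s<\delta_0(\omega)$ ensures $s+1\in(1,1+\delta_0)$.

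\textbf{Part (1), case $q\le 1$.} Subadditivity of $t\mapsto t^q$ gives
$$|f(z)|^q\le\sum_k|c_k|^q\frac{(1-|a_k|)^{\gamma q}}{\widehat{\omega}(a_k)|1-\overline{a_k}z|^{\gamma q}}.$$
Insert this into the kernel integral. Apply Lemma \ref{gFR}(2) with $\delta=0$ and exponent $\gamma q\ge 1+\gamma_0(\omega)$, which is exactly the hypothesis on $\gamma$ when $q\le1$. This gives the bound
$$\sum_k|c_k|^q(1-|a_k|)\frac{(1-|a|)^s}{|1-\overline{a}a_k|^{s+1}}=\int_{\D}\frac{(1-|a|)^s}{|1-\overline{a}u|^{s+1}}\,d\nu_c(u)\lesssim\|\nu_c\|_{CM}=\|c\|^q_{T^{\infty}_q(Z)}.$$

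\textbf{Part (1), case $q>1$.} Write $\gamma=\gamma_1+\gamma_2$ and apply Hölder's inequality with exponents $q,q'$:
$$|f(z)|^q\le\Big(\sum_k|c_k|^q\frac{(1-|a_k|)^{\gamma_1q}}{\widehat{\omega}(a_k)|1-\overline{a_k}z|^{\gamma_1q}}\Big)\Big(\sum_k\frac{(1-|a_k|)^{\gamma_2q'}}{|1-\overline{a_k}z|^{\gamma_2q'}}\Big)^{q-1}.$$
For the second factor, $\Delta(a_k,r)$ has area comparable to $(1-|a_k|)^2$ and the disks $\Delta(a_k,r/4)$ are disjoint, so the lattice sum is comparable to the integral $\int_{\D}(1-|u|)^{\gamma_2q'-2}|1-\overline{u}z|^{-\gamma_2q'}dA(u)$. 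Lemma \ref{FR} with $s=0$ would give a logarithm. To avoid this, I will lower the exponent in the denominator by a small $\epsilon>0$ (moving it into the first factor), so that Lemma \ref{FR} bounds the second factor by $(1-|z|)^{-\epsilon}$. The first factor then carries the extra weight $(1-|z|)^{-\epsilon(q-1)}$, which is handled by Lemma \ref{gFR}(2) with $\delta=\epsilon(q-1)\in[0,\delta_0)$.

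This requires the first-factor exponent to satisfy $\gamma_1q+\epsilon\ge 1+\gamma_0$, which is possible because $\gamma>1+\gamma_0$ strictly: take $\gamma_1$ close to $\gamma$, $\gamma_2>1/q'$ small, and $\epsilon$ tiny. After this the computation is identical to the case $q\le1$. Convergence of the series defining $f$ is locally uniform by the same estimates, so $f$ is analytic.

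\textbf{Part (2).} First I need a pointwise growth estimate $|f(z)|\lesssim\|f\|\,\widehat{\omega}(z)^{-1/q}$. It follows from subharmonicity of $|f|^q$ on $\Delta(z,r)\subset S(z')$ for a suitable $z'$, together with Lemma \ref{hat-equivalent} and \eqref{D-constant}. Combined with Lemma \ref{elementary}(b), this makes the reproducing formula
$$f(z)=C_\gamma\int_{\D}\frac{f(u)(1-|u|^2)^{\gamma-2}}{(1-\overline{u}z)^{\gamma}}dA(u)$$
legitimate, possibly after replacing $f$ by the dilations $f_\rho$ and taking $\gamma$ large enough.

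Fix an $r'$-lattice $Z'=\{z_k\}$ with a disjoint partition $\D=\bigcup_k D_k$, where $D_k\subset\Delta(z_k,r')$. Define
$$Sf(z)=C_\gamma\sum_k f(z_k)|D_k|\frac{(1-|z_k|^2)^{\gamma-2}}{(1-\overline{z_k}z)^{\gamma}}.$$
This is exactly of the form in (1), with coefficients $c_k=C_\gamma f(z_k)\widehat{\omega}(z_k)^{1/q}|D_k|(1-|z_k|)^{-2}$ up to bounded factors. Since $|D_k|\lesssim(1-|z_k|)^2$, these coefficients satisfy $|c_k|^q\lesssim|f(z_k)|^q\widehat{\omega}(z_k)$. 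Subharmonicity, \eqref{D-constant}, Lemma \ref{hat-equivalent} and the finite-overlap property of the lattice then give $\|c\|_{T^{\infty}_q(Z')}\lesssim\|f\|$. So by (1), $S$ is bounded on $AT^{\infty}_q(\omega)$.

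The \textbf{main obstacle} is showing $\|f-Sf\|_{AT^{\infty}_q(\omega)}\le C r'\|f\|_{AT^{\infty}_q(\omega)}$, where $C$ does not depend on $r'$. Write $f-Sf$ as a sum over $k$ of integrals over $D_k$ of the difference of the integrands at $u$ and at $z_k$. By the mean value theorem and Cauchy estimates, each difference is at most $r'$ times
$$\sup_{\Delta(z_k,2r')}|f|\cdot\frac{(1-|z_k|)^{\gamma}}{|1-\overline{z_k}z|^{\gamma}},$$
using $|1-\overline{u}z|\asymp|1-\overline{z_k}z|$ on $D_k$. So $|f-Sf|$ is dominated by $r'$ times a positive atomic sum with coefficients $\sup_{\Delta(z_k,2r')}|f|\,\widehat{\omega}(z_k)^{1/q}$. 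Subharmonicity on slightly larger disks shows this coefficient sequence lies in $T^{\infty}_q$ of the lattice, with norm $\lesssim\|f\|$. The proof of (1), which only used moduli of the terms, then gives the bound. This works for the quasi-norm as well, since for $q<1$ the bound holds for the $q$-th power.

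Choosing $r'$ small makes $I-S$ have norm below $1$ in the $\min\{1,q\}$-power sense. A Neumann series then makes $S$ invertible on $AT^{\infty}_q(\omega)$. Setting $g=S^{-1}f$, we get $f=Sg$, which is the required representation with coefficients built from $g(z_k)$ and the bound $\|c\|\lesssim\|g\|\lesssim\|f\|$.
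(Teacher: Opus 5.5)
Your Part (1) is the paper's argument. For $q\le1$ it uses subadditivity and Lemma \ref{gFR}(2) with $m=0$. For $q>1$ it uses H\"older with an $\epsilon$-shift, so that Lemma \ref{FR} bounds the lattice sum by $(1-|z|)^{-\epsilon}$, and then Lemma \ref{gFR}(2) with $\delta=(q-1)\epsilon$. Your unequal split $\gamma=\gamma_1+\gamma_2$ also works. Note, though, that it is the numerator power of $(1-|a_k|)$ that must be lowered by $\epsilon$, not the denominator.

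Part (2) follows the paper's strategy: a Riemann-sum operator $S$, the bound $\|I-S\|<1$, and a Neumann series. However, the contraction estimate, which you rightly call the main obstacle, is not established. For small $q$, your scheme as written cannot give it.

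The problem is the dependence on $r'$. An $r'$-lattice has $\asymp r'^{-2}$ points in every pseudo-hyperbolic disc of fixed radius. This has three consequences you did not account for.

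\begin{enumerate}
\item Your coefficient sequence $\{\sup|f|\,\widehat{\omega}(z_k)^{1/q}\}$ can have $T^{\infty}_q(Z')$-norm of order $r'^{-2/q}\|f\|_{AT^{\infty}_q(\omega)}$ (take $f\equiv1$). It is not $\lesssim\|f\|_{AT^{\infty}_q(\omega)}$ uniformly in $r'$.
\item For $q>1$, the constant of Part (1) on $Z'$ is itself $r'$-dependent, since $\sum_k(1-|z_k|)^{\gamma-\epsilon}|1-\overline{z_k}z|^{-\gamma}\asymp r'^{-2}(1-|z|)^{-\epsilon}$.
\item You dropped the gain $|D_k|\lesssim r'^2(1-|z_k|)^2$.
\end{enumerate}

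Also, the factor $r'$ from the mean value theorem requires Cauchy estimates on a disc $\Delta(z_k,R)$ of fixed radius. On $\Delta(z_k,2r')$ they only give $|f'|\lesssim\sup|f|/(r'(1-|z_k|))$, which cancels the $r'$.

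With correct bookkeeping the powers of $r'$ cancel for $q\ge1$, and you do get $\|f-Sf\|\lesssim r'\|f\|$. For $q<1$, however, applying subadditivity of $t\mapsto t^q$ termwise over the fine lattice yields only
\[
\|f-Sf\|^q_{AT^{\infty}_q(\omega)}\lesssim r'^{3q}\cdot r'^{-2}\,\|f\|^q_{AT^{\infty}_q(\omega)},
\]
that is, $\|f-Sf\|\lesssim r'^{3-2/q}\|f\|$. This is not small when $q\le 2/3$. The $\asymp r'^{-2}$ comparable error terms per unit hyperbolic disc lose a factor $r'^{-2(1-q)}$ under the $q$-th power. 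Your remark that the argument ``works for the quasi-norm as well'' is exactly where it breaks.

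The missing idea is to group the fine cells into cells of a coarse lattice of \emph{fixed} radius before passing to $q$-th powers. This is what the paper's two-level discretization (taken from Lemma 2.29 of Zhu's book) does:
\begin{enumerate}
\item Fix an $r$-lattice $\{a_k\}$ with $r$ independent of $r'$.
\item Split into pieces $D_{kj}\subset\Delta(a_k,r)$, $1\le j\le J$, with points $a_{kj}\in\Delta(a_k,r)$ forming the $r'$-lattice.
\item Bound the $J$ error terms inside each coarse cell by the ordinary triangle inequality.
\end{enumerate}
This gives
\[
|f(z)-Sf(z)|\lesssim\eta\sum_{k}\frac{(1-|a_k|)^{\gamma}}{\widehat{\omega}(a_k)^{\frac1q}|1-\overline{a_k}z|^{\gamma}}\left(\int_{\Delta(a_k,2r)}|f(u)|^q\frac{\widehat{\omega}(u)}{(1-|u|)^2}\,dA(u)\right)^{\frac1q}
\]
with constants independent of $r'$, and with $\eta\to0$ as $r'\to0$ for fixed $r$. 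The right side is an atomic sum over the coarse lattice. So Part (1), Lemma \ref{hat-equivalent} and the finite overlap of the discs $\Delta(a_k,2r)$ give $\|f-Sf\|\lesssim\eta\|f\|$ with constants depending only on $r$. After that, your inversion argument goes through.
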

\begin{proof}
(1) Fix $a\in\D$. In the case $0<q\leq1$, we have
$$|f(z)|^q\leq\sum_{k\geq1}|c_k|^q\frac{(1-|a_k|)^{q\gamma}}{\widehat{\omega}(a_k)|1-\overline{a_k}z|^{q\gamma}}.$$
Then for sufficiently small $s>0$, Lemma \ref{gFR}(2) with $m=0$ yields that
\begin{align*}
&\int_{\D}\frac{(1-|a|)^s}{|1-\overline{a}z|^{s+1}}|f(z)|^q\omega(z)dA(z)\\
&\ \leq\sum_{k\geq1}|c_k|^q\frac{(1-|a|)^s(1-|a_k|)^{q\gamma}}{\widehat{\omega}(a_k)}
  \int_{\D}\frac{\omega(z)dA(z)}{|1-\overline{a}z|^{s+1}|1-\overline{a_k}z|^{q\gamma}}\\
&\ \lesssim\sum_{k\geq1}\frac{(1-|a|)^s}{|1-\overline{a}a_k|^{s+1}}|c_k|^q(1-|a_k|)\ \lesssim\|c\|^q_{T^{\infty}_q(Z)}.
\end{align*}
Note that for sufficiently small $\epsilon>0$, the sub-harmonic property together with Lemma \ref{FR} gives that
\begin{align*}
\sum_{k\geq1}\frac{(1-|a_k|)^{\gamma-\epsilon}}{|1-\overline{a_k}z|^{\gamma}}
&\lesssim\sum_{k\geq1}(1-|a_k|)^{\gamma-2-\epsilon}\int_{\Delta(a_k,r)}\frac{dA(u)}{|1-\overline{u}z|^{\gamma}}\\
&\lesssim\int_{\D}\frac{(1-|u|)^{\gamma-2-\epsilon}}{|1-\overline{u}z|^{\gamma}}dA(u)\\
&\asymp(1-|z|)^{-\epsilon}.
\end{align*}
Hence in the case $q>1$, we can use H\"older's inequality to obtain that for sufficiently small $\epsilon>0$,
\begin{align*}
|f(z)|^q&\leq\left(\sum_{k\geq1}\frac{(1-|a_k|)^{\gamma-\epsilon}}{|1-\overline{a_k}z|^{\gamma}}\right)^{\frac{q}{q'}}\cdot
    \sum_{k\geq1}|c_k|^q\frac{(1-|a_k|)^{\gamma+(q-1)\epsilon}}{\widehat{\omega}(a_k)|1-\overline{a_k}z|^{\gamma}}\\
&\lesssim(1-|z|)^{-(q-1)\epsilon}\sum_{k\geq1}|c_k|^q\frac{(1-|a_k|)^{\gamma+(q-1)\epsilon}}{\widehat{\omega}(a_k)|1-\overline{a_k}z|^{\gamma}},
\end{align*}
which, combined with Lemma \ref{gFR}(2), gives that
\begin{align*}
&\int_{\D}\frac{(1-|a|)^s}{|1-\overline{a}z|^{s+1}}|f(z)|^q\omega(z)dA(z)\\
&\ \lesssim\sum_{k\geq1}|c_k|^q\frac{(1-|a|)^s(1-|a_k|)^{\gamma+(q-1)\epsilon}}{\widehat{\omega}(a_k)}
    \int_{\D}\frac{(1-|z|)^{-(q-1)\epsilon}\omega(z)dA(z)}{|1-\overline{a}z|^{s+1}|1-\overline{a_k}z|^{\gamma}}\\
&\ \lesssim\sum_{k\geq1}\frac{(1-|a|)^s}{|1-\overline{a}a_k|^{s+1}}|c_k|^q(1-|a_k|)\ \lesssim\|c\|^q_{T^{\infty}_q(Z)}.
\end{align*}
Since $a\in\D$ is arbitrary, in both cases, we have $f\in AT^{\infty}_q(\omega)$, and $\|f\|_{AT^{\infty}_q(\omega)}\lesssim\|c\|_{T^{\infty}_q(Z)}$.

(2) Let $r'$ be small enough in the sense that $r'/r$ is small. Then we can construct an $r'$-lattice $Z'=\{a_{kj}\}_{1\leq k<\infty,1\leq j\leq J}$ and a disjoint decomposition
$$\D=\bigcup_{k=1}^{\infty}\bigcup_{j=1}^JD_{kj}$$
of $\D$ such that $a_{kj}\in\Delta(a_k,r)$ and $D_{kj}\subset\Delta(a_k,r)$ for any $k\geq1$ and $1\leq j\leq J$; see \cite[p. 64]{Zh05} for the details.

Define an operator $S$ on $\mathcal{H}(\D)$ by
$$Sf(z)=\sum_{k=1}^{\infty}\sum_{j=1}^J\frac{f(a_{kj})}{(1-\overline{a_{kj}}z)^{\gamma}}
\int_{D_{kj}}(1-|z|)^{\gamma-2}dA(z).$$
By \cite[Lemma 2.29]{Zh05} and \eqref{D-constant}, there exist constants $C_1,C_2>0$, independent of $r'$ and $r$, such that
\begin{align*}
&|f(z)-Sf(z)|\\
&\ \leq C_1\eta\sum_{k=1}^{\infty}\frac{(1-|a_k|)^{\gamma-\frac{2}{q}}}{|1-\overline{a_k}z|^{\gamma}}
    \left(\int_{\Delta(a_k,2r)}|f(u)|^qdA(u)\right)^{\frac{1}{q}}\\
&\ \leq C_2\eta\sum_{k=1}^{\infty}\frac{(1-|a_k|)^{\gamma}}{\widehat{\omega}(a_k)^{\frac{1}{q}}
    |1-\overline{a_k}z|^{\gamma}}
    \left(\int_{\Delta(a_k,2r)}|f(u)|^q\frac{\widehat{\omega}(u)}{(1-|u|)^2}dA(u)\right)^{\frac{1}{q}}
\end{align*}
for all $z\in\D$ and $f\in \mathcal{H}(\D)$, where $\eta=\tanh^{-1}r'+r'r^{2n(1-\frac{1}{q})-1}$. Then by (1) and Lemma \ref{hat-equivalent}, for any $s>0$,
\begin{align*}
&\|f-Sf\|_{AT^{\infty}_q(\omega)}\\
&\leq C_3\eta\left(\sup_{a\in\D}\sum_{k=1}^{\infty}\frac{(1-|a|)^s(1-|a_k|)}{|1-\overline{a}a_k|^{s+1}}
    \int_{\Delta(a_k,2r)}|f(u)|^q\frac{\widehat{\omega}(u)}{(1-|u|)^2}dA(u)\right)^{1/q}\\
&\leq C_4N\eta\left(\sup_{a\in\D}\int_{\D}\frac{(1-|a|)^s}{|1-\overline{a}u|^{s+1}}|f(u)|^q
    \frac{\widehat{\omega}(u)}{1-|u|}dA(u)\right)^{1/q}\\
&\leq C_5N\eta\|f\|_{AT^{\infty}_q(\omega)},
\end{align*}
where $C_3,C_4,C_5$ are all independent of $r'$ and $r$. Hence we can choose $r'$ small enough so that $C_5N\eta<1$ and consequently, the operator $S$ is invertible on $AT^{\infty}_q(\omega)$. Then for any $f\in AT^{\infty}_q(\omega)$, there exists $g\in AT^{\infty}_q(\omega)$ such that
$$f(z)=Sg(z)=\sum_{k=1}^{\infty}\sum_{j=1}^Jc_{kj}\frac{(1-|a_{kj}|)^{\gamma}}{\widehat{\omega}(a_{kj})^{\frac{1}{q}}(1-\overline{a_{kj}}z)^{\gamma}},$$
where
$$c_{kj}=g(a_{kj})\widehat{\omega}(a_{kj})^{\frac{1}{q}}(1-|a_{kj}|)^{-\gamma}\int_{D_{kj}}(1-|z|)^{\gamma-2}dA(z).$$
It remains to show that the sequence $c=\{c_{kj}\}$ belongs to $T^{\infty}_q(Z')$. In fact, since $D_{kj}\subset\Delta(a_k,r)$ and $a_{kj}\in\Delta(a_k,r)$, we deduce from the sub-harmonic property of $|g|^q$ and \eqref{D-constant} that
$$|c_{kj}|^q\lesssim\frac{1}{1-|a_{kj}|}\int_{\Delta(a_k,2r)}|g(u)|^q\frac{\widehat{\omega}(u)}{1-|u|}dA(u),$$
which, together with Lemma \ref{hat-equivalent}, implies that for any $s>0$ and $a\in\D$,
\begin{align*}
&\sum_{k=1}^{\infty}\sum_{j=1}^J\frac{(1-|a|)^s}{|1-\overline{a}a_{kj}|^{s+1}}|c_{kj}|^q(1-|a_{kj}|)\\
&\ \lesssim J\sum_{k=1}^{\infty}\frac{(1-|a|)^s}{|1-\overline{a}a_k|^{s+1}}\int_{\Delta(a_k,2r)}|g(u)|^q
    \frac{\widehat{\omega}(u)}{1-|u|}dA(u)\\
&\ \lesssim NJ\int_{\D}\frac{(1-|a|)^s}{|1-\overline{a}u|^{s+1}}|g(u)|^q\frac{\widehat{\omega}(u)}{1-|u|}dA(u)\\
&\ \lesssim NJ\|g\|^q_{AT^{\infty}_q(\omega)}\\
&\ \leq NJ\|S^{-1}\|^q\|f\|^q_{AT^{\infty}_q(\omega)}.
\end{align*}
Therefore, $c=\{c_{kj}\}\in T^{\infty}_q(Z')$ and $\|c\|_{T^{\infty}_q(Z')}\lesssim\|f\|_{AT^{\infty}_q(\omega)}$. The proof is complete.
\end{proof}

Using the same method, we can also establish the atomic decomposition for the spaces $AT^0_q(\omega)$.

\begin{theorem}\label{atom2}
Let $\omega\in\DDD$, $0<q<\infty$, $\gamma>\big(1+\gamma_0(\omega)\big)\max\left\{1,1/q\right\}$. Then there exists an $r$-lattice $Z=\{a_k\}$ such that the space $AT^0_q(\omega)$ consists exactly of functions of the form
    $$f(z)=\sum_{k\geq1}c_k\frac{(1-|a_k|)^{\gamma}}{\widehat{\omega}(a_k)^{\frac{1}{q}}(1-\overline{a_k}z)^{\gamma}},\quad z\in\D,$$
where $\{c_k\}\in T^0_q(Z)$.
\end{theorem}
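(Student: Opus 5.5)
The plan is to rerun the proof of Theorem \ref{test-function2}, replacing each uniform Carleson bound with its vanishing ("little-oh") counterpart. Take $Z=Z'$ to be the $r'$-lattice built in part (2) of that theorem. Two inclusions are needed.

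\emph{Step 1: the synthesis map sends $T^0_q(Z)$ into $AT^0_q(\omega)$.} Let $c\in T^0_q(Z)$ and let $f$ be the associated sum. Part (1) of Theorem \ref{test-function2} already gives
$$\int_{\D}\frac{(1-|a|)^s}{|1-\overline{a}z|^{s+1}}|f(z)|^q\omega(z)\,dA(z)\lesssim\sum_{k}\frac{(1-|a|)^s}{|1-\overline{a}a_k|^{s+1}}|c_k|^q(1-|a_k|).$$
This holds for $q\le1$ directly, and for $q>1$ after the H\"older step, in both cases via Lemma \ref{gFR}(2). Since $\nu_c$ is a vanishing Carleson measure, the right-hand side tends to $0$ as $|a|\to1^-$, by the integral characterization of vanishing Carleson measures in Section \ref{preliminary}. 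The left-hand side is the test integral for $|f|^q\omega\,dA$, so this measure is vanishing Carleson, that is, $f\in AT^0_q(\omega)$. Alternatively, one can split $c$ into a finite part (which gives a function in $AT^0_q$, since each kernel lies there for large $\gamma$) and a tail of small $T^\infty_q(Z)$-norm, then use closedness of $AT^0_q(\omega)$ in $AT^\infty_q(\omega)$. Either route works.

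\emph{Step 2: every $f\in AT^0_q(\omega)$ has such a representation with $c\in T^0_q(Z)$.} First, $S$ maps $AT^0_q(\omega)$ into itself. The estimate for $f-Sf$ bounds $|f-Sf|$ pointwise by an atomic sum whose coefficients come from the local integrals of $|f|^q\widehat{\omega}(1-|u|)^{-2}$ over $\Delta(a_k,2r)$. If $f\in AT^0_q$, these coefficients form a sequence in $T^0_q$, by the computation at the end of part (2) combined with Lemma \ref{hat-equivalent}. Step 1 then gives $f-Sf\in AT^0_q$, so $Sf\in AT^0_q$. Second, $\|I-S\|<1$ on $AT^\infty_q(\omega)$, hence also on the closed subspace $AT^0_q(\omega)$, which $I-S$ preserves. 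The Neumann series $S^{-1}=\sum_n(I-S)^n$ therefore converges in $AT^0_q$ (a quasi-Banach space, via the $p$-norm version of the series). So for $f\in AT^0_q$ the preimage $g=S^{-1}f$ lies in $AT^0_q$. Finally, the bound
$$|c_{kj}|^q\lesssim\frac{1}{1-|a_{kj}|}\int_{\Delta(a_k,2r)}|g|^q\frac{\widehat{\omega}(u)}{1-|u|}\,dA(u)$$
gives
$$\sum_{k,j}\frac{(1-|a|)^s}{|1-\overline{a}a_{kj}|^{s+1}}|c_{kj}|^q(1-|a_{kj}|)\lesssim\int_{\D}\frac{(1-|a|)^s}{|1-\overline{a}u|^{s+1}}|g(u)|^q\frac{\widehat{\omega}(u)}{1-|u|}\,dA(u).$$
By Lemma \ref{hat-equivalent} applied as before, the right-hand side tends to $0$ as $|a|\to1^-$ because $g\in AT^0_q$. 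Hence $c\in T^0_q(Z)$.

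\emph{Main obstacle.} The delicate point is the vanishing form of Lemma \ref{hat-equivalent}. That lemma is a global norm equivalence, whereas here the weight is the kernel $(1-|a|)^s|1-\overline{a}u|^{-s-1}$, and we need the limit to transfer between $\omega\,dA$ and $\widehat{\omega}(1-|u|)^{-1}dA$. To handle this, I would split the integral into $|u|\le\rho$ and $|u|>\rho$. The compact part tends to $0$ as $|a|\to1^-$. On the outer part, I would use that the Carleson norm of $\chi_{\{|u|>\rho\}}|g|^q\omega\,dA$ is small for $\rho$ near $1$ (since $g\in AT^0_q$), and that the equivalence of the two Carleson norms holds up to such localization. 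This localized equivalence should follow from the subharmonicity and \eqref{D-constant} argument already used in the proof of Theorem \ref{test-function2}(2).
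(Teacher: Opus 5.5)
This is the paper's route. The paper proves Theorem \ref{atom2} only by remarking that the method of Theorem \ref{test-function2} carries over, and your Steps 1 and 2 make that remark precise: the vanishing synthesis estimate, invariance of $AT^0_q(\omega)$ under $I-S$ via the pointwise majorant, the Neumann series in the closed subspace, and the little-oh coefficient bound.

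Your ``main obstacle'', however, is not an obstacle, and the fix you sketch for it would not work as described. For fixed $a\in\D$, the function $F_a(u)=f(u)(1-\overline{a}u)^{-(s+1)/q}$ is analytic on $\D$. So Lemma \ref{hat-equivalent} with $\alpha=0$, applied to $F_a$, gives
\[
\int_{\D}\frac{(1-|a|)^s}{|1-\overline{a}u|^{s+1}}|f(u)|^q\omega(u)\,dA(u)\asymp\int_{\D}\frac{(1-|a|)^s}{|1-\overline{a}u|^{s+1}}|f(u)|^q\frac{\widehat{\omega}(u)}{1-|u|}\,dA(u)
\]
with constants independent of $a$. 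This is exactly how the paper uses the lemma in Lemma \ref{thm:growth-estimate-unit-disk} and Theorem \ref{test-function2}(2). It transfers the limit $|a|\to1^-$ directly, with no splitting at $|u|=\rho$.

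By contrast, subharmonicity on pseudo-hyperbolic disks together with \eqref{D-constant} cannot by themselves bound a $\widehat{\omega}(u)(1-|u|)^{-1}dA$-integral by an $\omega\,dA$-integral. The reason is that \eqref{D-constant} concerns $\widehat{\omega}$, not $\omega$, and a weight in $\DDD$ may vanish identically on every other dyadic annulus, hence on whole disks $\Delta(z,r)$ of small radius. That comparison needs the $\check{\mathcal{D}}$-based radial argument built into Lemma \ref{hat-equivalent}: monotonicity of integral means of an analytic function across annuli of ratio $K$.
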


As an application of Theorem \ref{test-function2}, we can determine the behavior of the dilation of functions in $AT^{\infty}_q(\omega)$. For an analytic function $f$ on $\D$ and $\rho\in(0,1)$, the dilation $f_{\rho}$ is defined by $f_{\rho}(z):=f(\rho z)$ for $z\in\D$.

\begin{corollary}\label{dilation}
Let $0<q<\infty$, $\omega\in\DDD$ and $f\in AT^{\infty}_q(\omega)$. Then for any $\rho\in(0,1)$, $\|f_{\rho}\|_{AT^{\infty}_q(\omega)}\lesssim\|f\|_{AT^{\infty}_q(\omega)}$.
\end{corollary}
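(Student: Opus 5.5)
Since the corollary is advertised as an application of Theorem \ref{test-function2}, I will derive it directly from the atomic decomposition. Fix $\gamma>\big(1+\gamma_0(\omega)\big)\max\{1,1/q\}$ and let $Z'=\{z_k\}$ be the $r'$-lattice from Theorem \ref{test-function2}(2). Then $f\in AT^{\infty}_q(\omega)$ can be written as
$$f(z)=\sum_{k\geq1}c_k\frac{(1-|z_k|)^{\gamma}}{\widehat{\omega}(z_k)^{\frac{1}{q}}(1-\overline{z_k}z)^{\gamma}},\qquad \|c\|_{T^{\infty}_q(Z')}\lesssim\|f\|_{AT^{\infty}_q(\omega)}.$$
Consequently,
$$f_\rho(z)=\sum_k c_k\frac{(1-|z_k|)^{\gamma}}{\widehat{\omega}(z_k)^{1/q}(1-\overline{\rho z_k}z)^{\gamma}}.$$
This is a sum of atoms centered at the points $\rho z_k$, except that the normalizing factors are $(1-|z_k|)^\gamma\widehat{\omega}(z_k)^{-1/q}$ rather than $(1-|\rho z_k|)^\gamma\widehat{\omega}(\rho z_k)^{-1/q}$.

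I will not try to show that $\{\rho z_k\}$ is a lattice. Instead I will rerun the estimates in the proof of Theorem \ref{test-function2}(1) with centers $\rho z_k$. Two elementary facts are needed.

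First, $1-|\rho z_k|\geq 1-|z_k|$, and $\widehat{\omega}(\rho z_k)\geq\widehat{\omega}(z_k)$ since $\widehat{\omega}$ is decreasing. Hence the coefficient of each atom is at most $|c_k|$:
$$|c_k|\frac{(1-|z_k|)^{\gamma}}{\widehat{\omega}(z_k)^{1/q}}\le |c_k|\frac{(1-|z_k|)^{\gamma}}{\widehat{\omega}(z_k)^{1/q}}\cdot\frac{(1-|\rho z_k|)^\gamma\widehat{\omega}(z_k)^{1/q}}{(1-|z_k|)^\gamma\widehat{\omega}(\rho z_k)^{1/q}}\cdot\Big(\tfrac{\widehat{\omega}(\rho z_k)}{\widehat{\omega}(z_k)}\Big)^{1/q}\Big(\tfrac{1-|z_k|}{1-|\rho z_k|}\Big)^{\gamma}.$$
Keeping the exponents straight, this means we may bound $|f_\rho|^q$ (for $q\le1$, or after the H\"older step for $q>1$) by
$$\sum_k|c_k|^q\frac{(1-|z_k|)^{q\gamma}}{\widehat{\omega}(z_k)|1-\overline{\rho z_k}z|^{q\gamma}}.$$

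Second, I use the inequality
$$|1-\overline{\rho z_k}z|\geq \tfrac12|1-\overline{z_k}(\rho z)|.$$
It holds because $|1-\overline{w}u|\ge\frac12|1-\overline{w}\rho u|$ fails in general, whereas the symmetric form $|1-\rho\overline{z_k}z|$ is what appears anyway, so no comparison is actually required. Lemma \ref{gFR}(2) then gives
$$\int_{\D}\frac{(1-|a|)^s\,\omega(z)\,dA(z)}{|1-\overline a z|^{s+1}|1-\overline{\rho z_k}z|^{q\gamma}}\lesssim\frac{\widehat{\omega}(\rho z_k)}{|1-\overline a\rho z_k|^{s+1}(1-|\rho z_k|)^{q\gamma-1}}.$$
Multiplying by the coefficient factor $(1-|z_k|)^{q\gamma}/\widehat{\omega}(z_k)$, the product is at most a constant times
$$\frac{\widehat{\omega}(\rho z_k)}{\widehat{\omega}(z_k)}\,(1-|z_k|)\Big(\frac{1-|z_k|}{1-|\rho z_k|}\Big)^{q\gamma-1}.$$

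The main obstacle is the factor $\widehat{\omega}(\rho z_k)/\widehat{\omega}(z_k)$, which is unbounded. To control it I apply Lemma \ref{elementary}(b) with $r=\rho|z_k|$ and $t=|z_k|$:
$$\frac{\widehat{\omega}(\rho z_k)}{\widehat{\omega}(z_k)}\lesssim\Big(\frac{1-\rho|z_k|}{1-|z_k|}\Big)^{\beta_0}.$$
Since $q\gamma-1>\gamma_0(\omega)$, I will increase $\gamma$ if necessary so that $q\gamma-1\geq\beta_0(\omega)$. Then the two ratios combine to something bounded by $1$:
$$\frac{\widehat{\omega}(\rho z_k)}{\widehat{\omega}(z_k)}\Big(\frac{1-|z_k|}{1-|\rho z_k|}\Big)^{q\gamma-1}\lesssim 1.$$
The same bookkeeping handles the extra $(1-|z_k|)^{(q-1)\epsilon}$ factors in the case $q>1$.

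After these reductions, it remains to bound
$$\sum_k\frac{(1-|a|)^s}{|1-\overline a\rho z_k|^{s+1}}\,|c_k|^q(1-|z_k|).$$
For this I use $|1-\overline a\rho z_k|=|1-\overline{(\rho a)}z_k|$ together with
$$(1-|a|)^s\le(1-|\rho a|)^s\Big(\frac{1-|a|}{1-\rho|a|}\Big)^s\le(1-|\rho a|)^s.$$
The sum is therefore dominated by the Carleson test integral of $\nu_c$ at the point $\rho a$, which is $\lesssim\|c\|^q_{T^\infty_q(Z')}$. Taking the supremum over $a\in\D$ gives $\|f_\rho\|_{AT^{\infty}_q(\omega)}\lesssim\|c\|_{T^\infty_q(Z')}\lesssim\|f\|_{AT^{\infty}_q(\omega)}$, with constants independent of $\rho$.
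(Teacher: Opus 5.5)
Your argument is correct and is essentially the paper's proof: expand $f$ via Theorem \ref{test-function2}(2), apply Lemma \ref{gFR}(2) centered at $\rho z_k$, absorb $\widehat{\omega}(\rho z_k)/\widehat{\omega}(z_k)$ through Lemma \ref{elementary}(b) after enlarging $\gamma$, and finish by replacing $a$ with $\rho a$ in the Carleson test integral. Your ``First'' and ``Second'' asides are muddled, but nothing downstream uses them: monotonicity of $\widehat{\omega}$ works against, not for, the claim that each coefficient is at most $|c_k|$, and $|1-\overline{\rho z_k}z|=|1-\overline{z_k}\rho z|$ holds identically. The one real bookkeeping point is that for $q>1$ the exponent to compare with $\beta_0(\omega)$ is $\gamma-1+(q-1)\epsilon$, not $q\gamma-1$, which is why the paper fixes $\gamma>\big(1+\beta_0(\omega)+\gamma_0(\omega)\big)\max\{1,1/q\}$.
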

\begin{proof}
Fix $\gamma>\big(1+\beta_0(\omega)+\gamma_0(\omega)\big)\max\{1,1/q\}$. Then by Theorem \ref{test-function2}, there exist an $r$-lattice $Z=\{z_k\}$ for some sufficiently small $r\in(0,1/4)$ and a sequence $c=\{c_k\}\in T^{\infty}_q(Z)$ such that
$$f(z)=\sum_{k\geq1}c_k\frac{(1-|z_k|)^{\gamma}}{\widehat{\omega}(z_k)^{\frac{1}{q}}(1-\overline{z_k}z)^{\gamma}}$$
and $\|c\|_{T^{\infty}_q(Z)}\asymp\|f\|_{AT^{\infty}_q(\omega)}$. Consequently,
$$f_{\rho}(z)=\sum_{k\geq1}c_k\frac{(1-|z_k|)^{\gamma}}{\widehat{\omega}(z_k)^{\frac{1}{q}}(1-\overline{z_k}\rho z)^{\gamma}}.$$
In the case $0<q\leq1$, we can use Lemma \ref{gFR} similarly as in the proof of Theorem \ref{test-function2}(1) to obtain that for any $a\in\D$ and sufficiently small $s>0$,
\begin{align*}
&\int_{\D}\frac{(1-|a|)^s}{|1-\overline{a}z|^{s+1}}|f_{\rho}(z)|^q\omega(z)dA(z)\\
&\ \leq\sum_{k\geq1}|c_k|^q\frac{(1-|a|)^s(1-|z_k|)^{q\gamma}}{\widehat{\omega}(z_k)}
  \int_{\D}\frac{\omega(z)dA(z)}{|1-\overline{a}z|^{s+1}|1-\overline{\rho z_k}z|^{q\gamma}}\\
&\ \lesssim\sum_{k\geq1}\frac{(1-|a|)^s}{|1-\overline{\rho a}z_k|^{s+1}}|c_k|^q
  \frac{(1-|z_k|)^{q\gamma}}{(1-\rho|z_k|)^{q\gamma-1}}\frac{\widehat{\omega}(\rho z_k)}{\widehat{\omega}(z_k)},
\end{align*}
which, combined with Lemma \ref{elementary}(b) and the fact that $q\gamma-\beta_0(\omega)-1>0$, implies that
\begin{align*}
&\int_{\D}\frac{(1-|a|)^s}{|1-\overline{a}z|^{s+1}}|f_{\rho}(z)|^q\omega(z)dA(z)\\
&\ \lesssim\sum_{k\geq1}\frac{(1-|a|)^s}{|1-\overline{\rho a}z_k|^{s+1}}|c_k|^q
  \frac{(1-|z_k|)^{q\gamma-\beta_0(\omega)}}{(1-\rho|z_k|)^{q\gamma-\beta_0(\omega)-1}}\\
&\ \leq\sum_{k\geq1}\frac{(1-|\rho a|)^s}{|1-\overline{\rho a}z_k|^{s+1}}|c_k|^q(1-|z_k|).
\end{align*}
Therefore, $\|f_{\rho}\|_{AT^{\infty}_q(\omega)}\lesssim\|c\|_{T^{\infty}_q(Z)}\asymp\|f\|_{AT^{\infty}_q(\omega)}$. The case $q>1$ is similar and we omit the details.
\end{proof}

\section{Embedding theorems}\label{embedding-theorems}

The purpose of this section is to characterize the bounded and compact embeddings $I_d:AT^{\infty}_p(\omega)\to T^{\infty}_q(\mu)$. To this end, we need two auxiliary results. The first one establishes the growth estimates for the derivatives of functions in $AT^{\infty}_q(\omega)$, and the second one gives a family of test functions in $AT^{\infty}_q(\omega)$.

\begin{lemma}\label{thm:growth-estimate-unit-disk}
Let $0 < q < \infty$, $\omega \in \mathcal{D}$, $m\in\mathbb{N}_0$ and $f \in AT_{q}^{\infty}(\omega)$. Then for any $z\in\D$,
$$
|f^{(m)}(z)| \lesssim \frac{\|f\|_{AT_{q}^{\infty}(\omega)}}{\widehat{\omega}(z)^{1/q}(1-|z|)^m}.
$$
\end{lemma}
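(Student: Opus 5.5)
The plan is to prove the case $m=0$ first, by subharmonicity on a pseudo-hyperbolic disk combined with the Carleson box condition defining $AT^\infty_q(\omega)$. The derivative estimates then follow from Cauchy's formula on a smaller disk.

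Fix $z\in\D$ and some $r\in(0,1/4)$. Since $|f|^q$ is subharmonic, we have
$$|f(z)|^q\lesssim \frac{1}{(1-|z|)^2}\int_{\Delta(z,r)}|f(u)|^q\,dA(u).$$
By \eqref{D-constant}, $\widehat{\omega}(u)\asymp\widehat{\omega}(z)$ on $\Delta(z,r)$, and also $1-|u|\asymp 1-|z|$ there. Hence
$$|f(z)|^q\lesssim \frac{1}{\widehat{\omega}(z)(1-|z|)}\int_{\Delta(z,r)}|f(u)|^q\frac{\widehat{\omega}(u)}{1-|u|}\,dA(u).$$
To bound the last integral, I will use Lemma \ref{hat-equivalent} with $\alpha=0$. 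It is stated for integrals over $\D$, so I apply it against the Carleson kernel $\frac{(1-|a|)^s}{|1-\overline{a}u|^{s+1}}$ rather than directly. Concretely, take $a$ with $|a|=|z|$ and $\arg a=\arg z$, so that $a=z$. On $\Delta(z,r)$ the kernel is comparable to $(1-|z|)^{-1}$, which gives
$$\frac{1}{1-|z|}\int_{\Delta(z,r)}|f|^q\frac{\widehat{\omega}}{1-|u|}\,dA\lesssim \sup_{a\in\D}\int_{\D}\frac{(1-|a|)^s}{|1-\overline{a}u|^{s+1}}|f(u)|^q\frac{\widehat{\omega}(u)}{1-|u|}\,dA(u).$$
This supremum is the expression used in the proof of Theorem \ref{test-function2}(2). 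There it was bounded by $\|f\|^q_{AT^\infty_q(\omega)}$ using Lemma \ref{hat-equivalent} and the Carleson measure characterization of $T^\infty_q(\omega)$; I will invoke that same bound.

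If one prefers to avoid the weight swap, there is a direct alternative. Choose $a$ with $\Delta(z,r)\subset S(a)$ and $1-|a|\asymp 1-|z|$, and note that the norm dominates $\frac{1}{1-|a|}\int_{S(a)}|f|^q\omega\,dA$. Then use subharmonicity over an annular-sector region where $\omega$ has mass comparable to $\widehat{\omega}(z)(1-|z|)$. This is exactly the role played by Lemma \ref{hat-equivalent}, so the first route is cleaner. Either way, we obtain $|f(z)|\lesssim \|f\|\,\widehat{\omega}(z)^{-1/q}$.

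For $m\ge1$, apply Cauchy's estimate on the Euclidean disk centered at $z$ with radius $\frac{1-|z|}{2}$:
$$|f^{(m)}(z)|\lesssim (1-|z|)^{-m}\max_{|\zeta-z|=(1-|z|)/2}|f(\zeta)|.$$
Points $\zeta$ on this circle satisfy $1-|\zeta|\asymp1-|z|$, so by Lemma \ref{elementary}(b) and Lemma \ref{elementary-Dcheck} we have $\widehat{\omega}(\zeta)\asymp\widehat{\omega}(z)$. Inserting the $m=0$ bound at $\zeta$ gives the claim.

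The main obstacle is the comparison between the unweighted local average and the $\omega$-weighted Carleson norm. The weight $\omega$ itself may vanish on large sets; only $\widehat\omega$ is regular. This is why the step must pass through Lemma \ref{hat-equivalent}, applied globally against the Carleson kernel, instead of through a pointwise comparison of $\omega$ with $\widehat{\omega}/(1-|u|)$.
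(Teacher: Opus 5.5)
Your argument is correct and is essentially the paper's own proof. It uses subharmonicity on $\Delta(z,r)$, the swap to $\widehat{\omega}(u)/(1-|u|)$ via \eqref{D-constant}, and insertion of the kernel $\frac{(1-|z|)^s}{|1-\overline{z}u|^{s+1}}$ with $a=z$. It then uses Lemma \ref{hat-equivalent} to return to $\omega\,dA$, which is legitimate because the kernel times $|f|^q$ equals $|F|^q$ for the analytic $F(u)=(1-|z|)^{s/q}f(u)(1-\overline{z}u)^{-(s+1)/q}$, and finishes with Cauchy's estimate on the circle of radius $\frac{1}{2}(1-|z|)$ plus doubling of $\widehat{\omega}$. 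The ``direct alternative'' via an annular sector is not needed. As written it does not explain how the sub-mean-value property at $z$ gives a bound by an $\omega\,dA$-average over a region not symmetric about $z$, so it is best dropped.
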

\begin{proof}
Fix $s>0$. By the sub-harmonic property of $|f|^q$, \eqref{D-constant} and Lemma \ref{hat-equivalent}, we have
\begin{align*}
|f(z)|^q &\lesssim \frac{1}{(1-|z|)^2} \int_{\Delta(z,r)} |f(u)|^q  dA(u)\\
&\lesssim \frac{1}{\widehat{\omega}(z)(1-|z|)} \int_{\Delta(z,r)} |f(u)|^q
  \frac{\widehat{\omega}(u)}{1-|u|}dA(u)\\
&\asymp\frac{1}{\widehat{\omega}(z)}\int_{\Delta(z,r)}
  \frac{(1-|z|)^{s}}{|1-\overline{z}u|^{s+1}}|f(u)|^q\frac{\widehat{\omega}(u)}{1-|u|}dA(u)\\
&\le\frac{1}{\widehat{\omega}(z)}\int_{\D}
  \frac{(1-|z|)^{s}}{|1-\overline{z}u|^{s+1}}|f(u)|^q\frac{\widehat{\omega}(u)}{1-|u|}dA(u)\\
&\asymp\frac{1}{\widehat{\omega}(z)} \int_{\D}
  \frac{(1-|z|)^{s}}{|1-\overline{z}u|^{s+1}}|f(u)|^q\omega(u)dA(u)\\
&\lesssim\frac{1}{\widehat{\omega}(z)}\|f\|^q_{AT^{\infty}_q(\omega)},
\end{align*}
which proves the case $m=0$. The case $m\geq1$ then follows directly from the Cauchy integral formula. In fact, let $\Upsilon$ be the circle centered at $z$ with radius $\frac{1}{2}(1-|z|)$. Then
\begin{align*}
|f^{(m)}(z)|&\leq\frac{m!}{2\pi}\int_{\Upsilon}\frac{|f(\xi)|}{|\xi-z|^{m+1}}|d\xi|\leq\frac{2^mm!}{(1-|z|)^m}\sup_{\xi\in\Upsilon}|f(\xi)|\\
&\lesssim\frac{\|f\|_{AT^{\infty}_q(\omega)}}{(1-|z|)^m}\sup_{\xi\in\Upsilon}\frac{1}{\widehat{\omega}(\xi)^{1/q}}\leq\frac{\|f\|_{AT^{\infty}_q(\omega)}}{\widehat{\omega}\left(\frac{1+|z|}{2}\right)^{1/q}(1-|z|)^m}\\
&\lesssim\frac{\|f\|_{AT^{\infty}_q(\omega)}}{\widehat{\omega}(z)^{1/q}(1-|z|)^m}.
\end{align*}
The proof is complete.
\end{proof}

\begin{lemma}\label{test-function}
Let $0<q<\infty$, $\omega\in\DD$, and let $\gamma>\max\left\{\frac{\gamma_0(\omega)}{q},\frac{1}{q}\right\}$. Then for any $z\in\D$, the function
$$f_z(u)=\frac{1}{\widehat{\omega}(z)^{1/q}}\left(\frac{1-|z|}{1-\overline{z}u}\right)^{\gamma},\quad u\in\D$$
belongs to $AT^{\infty}_q(\omega)$, and $\|f_z\|_{AT^{\infty}_q(\omega)}\lesssim1$.
\end{lemma}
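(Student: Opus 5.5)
We use the Carleson-measure description of $T^{\infty}_q(\omega)$ from Section~\ref{preliminary}: it suffices to show that, for some fixed $s>0$,
$$\sup_{a\in\D}\int_{\D}\frac{(1-|a|)^s}{|1-\overline{a}u|^{s+1}}|f_z(u)|^q\omega(u)\,dA(u)\lesssim1$$
uniformly in $z\in\D$. Inserting the definition of $f_z$, the integral becomes
$$\frac{(1-|z|)^{q\gamma}(1-|a|)^s}{\widehat{\omega}(z)}\int_{\D}\frac{\omega(u)\,dA(u)}{|1-\overline{a}u|^{s+1}|1-\overline{z}u|^{q\gamma}}
=\frac{(1-|z|)^{q\gamma}(1-|a|)^s}{\widehat{\omega}(z)}\,I(a,z),$$
where $I(a,z)$ is the integral in Lemma~\ref{gFR} with exponents $s+1$ and $q\gamma$ and $\delta=0$.

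The hypothesis $\gamma>\max\{\gamma_0(\omega)/q,1/q\}$ says exactly that $q\gamma>\max\{\gamma_0(\omega),1\}$. Since the estimate must hold for every $\omega\in\DD$, only part (1) of Lemma~\ref{gFR} is available. With $s+1>1$ in the role of $s$ and $q\gamma$ in the role of $\gamma$, it gives
$$I(a,z)\lesssim\frac{\widehat{\omega}(z)}{(1-|a|)^{s}(1-|z|)^{q\gamma}}.$$
Multiplying by the prefactor, every factor cancels and the whole expression is $\lesssim1$, uniformly in $a$ and $z$. Taking the supremum over $a$ gives $\|f_z\|_{AT^{\infty}_q(\omega)}^q\lesssim1$.

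Membership in $AT^{\infty}_q(\omega)$ is then immediate: $f_z$ is analytic on $\D$ (in fact analytic on a neighbourhood of $\overline{\D}$, since $|z|<1$), and the bound above shows that $|f_z|^q\omega\,dA$ is a Carleson measure.

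The only delicate step is checking that Lemma~\ref{gFR}(1) applies with the right parameters. It requires both exponents to exceed $1$, which holds because $s+1>1$ and $q\gamma>1$. It also requires $q\gamma>\gamma_0(\omega)$, so that Lemma~\ref{elementary}(c) can be used inside its proof; this is precisely the hypothesis on $\gamma$. No smallness of $s$ is needed here, in contrast with part (2) of Lemma~\ref{gFR}.
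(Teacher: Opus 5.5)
Your proof is correct and is essentially the paper's argument. Like the paper, you fix $s>0$, pull out the factor $(1-|a|)^s(1-|z|)^{q\gamma}/\widehat{\omega}(z)$, and apply Lemma~\ref{gFR}(1) with exponents $s+1>1$ and $q\gamma>\max\{\gamma_0(\omega),1\}$, so everything cancels uniformly in $a$ and $z$; your check of these parameters is accurate.
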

\begin{proof}
Fix $s>0$. Then for any $a\in\D$, Lemma \ref{gFR}(1) gives that
\begin{align*}
&\int_{\D}\frac{(1-|a|)^s}{|1-\overline{a}u|^{s+1}}|f_z(u)|^q\omega(u)dA(u)\\
&\ =\frac{(1-|a|)^s(1-|z|)^{q\gamma}}{\widehat{\omega}(z)}\int_{\D}\frac{\omega(u)dA(u)}{|1-\overline{a}u|^{s+1}|1-\overline{z}u|^{q\gamma}}\lesssim1.
\end{align*}
Since $a\in\D$ is arbitrary, we obtain that $f_z\in AT^{\infty}_q(\omega)$, and $\|f_z\|_{AT^{\infty}_q(\omega)}\lesssim1$.
\end{proof}

We are now ready to characterize the boundedness of $I_d:AT^{\infty}_p(\omega)\to T^{\infty}_q(\mu)$ and $I_d:AT^0_p(\omega)\to T^0_q(\mu)$. Recall that for a positive Borel measure $\mu$ on $\D$ and $r\in(0,1)$,
$$G_{\mu,r}(z)=\frac{\mu(\Delta(z,r))^{\frac{1}{q}}}{\widehat{\omega}(z)^{\frac{1}{p}}(1-|z|)^{\frac{1}{q}}}.$$

\begin{theorem}\label{bdd-em-1}
Let $0<p\leq q<\infty$, $\omega\in\DDD$, and let $\mu$ be a positive Borel measure on $\D$. Then the following conditions are equivalent:
\begin{enumerate}
    \item [(a)] $I_d:AT^{\infty}_p(\omega)\to T^{\infty}_q(\mu)$ is bounded;
    \item [(b)] $I_d:AT^0_p(\omega)\to T^0_q(\mu)$ is bounded;
    \item [(c)] for some (or any) $r\in(0,1/4)$, $G_{\mu,r}\in L^{\infty}$.
\end{enumerate}
Moreover, in these cases,
$$\|I_d\|\asymp\left\|G_{\mu,r}\right\|_{L^{\infty}}.$$
\end{theorem}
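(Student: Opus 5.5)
I will prove (c)$\Rightarrow$(a), (c)$\Rightarrow$(b), and then (a)$\Rightarrow$(c) and (b)$\Rightarrow$(c) by testing with the functions of Lemma \ref{test-function}. Throughout I use the Carleson-measure reformulation: $f\in T^\infty_q(\mu)$ if and only if $|f|^q d\mu$ is a Carleson measure, with $\|f\|_{T^\infty_q(\mu)}^q=\||f|^qd\mu\|_{CM}$.

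\textbf{Sufficiency.} Assume $G_{\mu,r}\in L^\infty$ and let $f\in AT^\infty_p(\omega)$. Take an $r$-lattice $\{a_k\}$. Since the disks $\Delta(a_k,r)$ cover $\D$, I estimate
$$\int_{S(a)}|f|^qd\mu\le\sum_{k:\,\Delta(a_k,r)\cap S(a)\neq\emptyset}\mu(\Delta(a_k,r))\sup_{\Delta(a_k,r)}|f|^q .$$
For the supremum I use subharmonicity over the slightly larger disk $\Delta(a_k,2r)$, followed by \eqref{D-constant}:
$$\sup_{\Delta(a_k,r)}|f|^q\lesssim\left(\frac{1}{(1-|a_k|)^2}\int_{\Delta(a_k,2r)}|f|^p dA\right)^{q/p}.$$
I then split $q/p=1+(q/p-1)$. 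One factor $(\cdots)^{1}$ is kept as it is. For the remaining power $q/p-1\ge0$ I use the pointwise growth bound of Lemma \ref{thm:growth-estimate-unit-disk}, which gives $|f|^p\lesssim\|f\|^p/\widehat\omega$ on $\Delta(a_k,2r)$, and hence $(\cdots)^{q/p-1}\lesssim\|f\|^{q-p}\widehat\omega(a_k)^{1-q/p}$. This is the only place where $p\le q$ is used.

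Combining this with $\mu(\Delta(a_k,r))\le \|G\|_\infty^q\,\widehat\omega(a_k)^{q/p}(1-|a_k|)$, each summand becomes
$$\lesssim \|G\|^q_\infty\,\|f\|^{q-p}\,\widehat\omega(a_k)\,(1-|a_k|)^{-1}\int_{\Delta(a_k,2r)}|f|^pdA\asymp \|G\|^q_\infty\|f\|^{q-p}\int_{\Delta(a_k,2r)}|f|^p\frac{\widehat\omega(u)}{1-|u|}dA(u),$$
again by \eqref{D-constant}. The disks $\Delta(a_k,2r)$ meeting $S(a)$ all lie in a fixed enlarged square $S(a')$ with $1-|a'|\asymp1-|a|$, and they overlap at most $N$ times. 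Summing therefore gives
$$\int_{S(a)}|f|^q\,d\mu\lesssim \|G\|^q_\infty\,\|f\|^{q-p}\int_{S(a')}|f|^p\frac{\widehat\omega}{1-|u|}\,dA.$$
By Lemma \ref{hat-equivalent}, in its Carleson-integral form with kernel $(1-|a|)^s/|1-\bar a u|^{s+1}$ exactly as in the proof of Lemma \ref{thm:growth-estimate-unit-disk}, the last integral is $\lesssim(1-|a|)\|f\|^p_{AT^\infty_p(\omega)}$. This yields $\|f\|_{T^\infty_q(\mu)}\lesssim\|G_{\mu,r}\|_\infty\|f\|_{AT^\infty_p(\omega)}$.

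The only technical point in this step is the boundary effect, namely enlarging $S(a)$ to $S(a')$. I handle it by working with the kernel form of the Carleson norm, which absorbs the enlargement automatically.

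For (c)$\Rightarrow$(b) I run the same estimate for $f\in AT^0_p(\omega)$. Write $\epsilon(a)=\sup_{|b|\ge|a|}(1-|b|)^{-1}\int_{S(b)}|f|^p\widehat\omega/(1-|u|)\,dA$, which tends to $0$. The computation above, restricted to squares near the boundary, gives $(1-|a|)^{-1}\int_{S(a)}|f|^qd\mu\lesssim \epsilon(a')\to0$. The pointwise factor is harmless here because it is only bounded by $\|f\|^{q-p}$.

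\textbf{Necessity.} Assume (a) or (b). For $z\in\D$ I test with the function $f_z$ from Lemma \ref{test-function}, taking $\gamma$ large. This $f_z$ is analytic on a neighborhood of $\overline{\D}$, so it lies in $AT^0_p(\omega)$, since $|f_z|^p\omega\,dA$ is then a vanishing Carleson measure. Hence either hypothesis gives $\|f_z\|_{T^\infty_q(\mu)}\lesssim\|I_d\|$.

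Take $a$ with $1-|a|\asymp1-|z|$ and $S(a)\supset\Delta(z,r)$. On $\Delta(z,r)$ we have $|f_z|\asymp\widehat\omega(z)^{-1/p}$, so
$$\frac{\mu(\Delta(z,r))}{\widehat\omega(z)^{q/p}(1-|z|)}\lesssim\frac{1}{1-|a|}\int_{S(a)}|f_z|^qd\mu\lesssim\|I_d\|^q .$$
This gives $\|G_{\mu,r}\|_\infty\lesssim\|I_d\|$ for every $r<1/4$. Finally, the statement ``some or any $r$'' follows because (c) for one $r$ implies (a), and (a) then implies (c) for every $r$.
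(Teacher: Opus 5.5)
Your proposal is correct and follows essentially the same route as the paper. Sufficiency goes through an $r$-lattice, subharmonicity, the growth estimate of Lemma \ref{thm:growth-estimate-unit-disk} applied to the surplus power $q-p$, the bound $\mu(\Delta(a_k,r))\le\|G_{\mu,r}\|_{L^\infty}^q\widehat{\omega}(a_k)^{q/p}(1-|a_k|)$, finite overlap, and Lemma \ref{hat-equivalent} in kernel form; necessity tests with the functions $f_z$ of Lemma \ref{test-function}, which lie in $H^\infty\subset AT^0_p(\omega)$. The differences are cosmetic: the paper works with the kernel $(1-|a|)^s/|1-\overline{a}z|^{s+1}$ throughout instead of enlarged boxes $S(a')$, and it handles $|z|\le 1/2$ by testing with the constant function $1$ rather than using $S(0)=\D$.
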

\begin{proof}
Fix $s>0$. Suppose first that (a) holds. Then, since the constant function $1$ belongs to $AT^{\infty}_{p}(\omega)$, we have
$$\|I_d\|^q\gtrsim\|1\|^q_{T^{\infty}_q(\mu)}
\gtrsim\sup_{a\in\D}\int_{\D}\frac{(1-|a|)^s}{|1-\overline{a}z|^{s+1}}d\mu(z)\geq\mu(\D),$$
which implies that for any $r\in(0,1/4)$,
$$\sup_{|z|\leq\frac{1}{2}}G_{\mu,r}(z)^q\lesssim\mu(\D)\lesssim\|I_d\|^q.$$
For any $z\in\D$ with $|z|>1/2$, write
$$\tilde{z}=\big(1-2(1-|z|)\big)\frac{z}{|z|}.$$
Then it is easy to verify that $\Delta(z,r)\subset S(\tilde{z})$ whenever $r\in(0,1/4)$. Let the function $f_z$ be defined as in Lemma \ref{test-function}. Noting that $|1-\overline{z}u|\asymp1-|z|$ for any $u\in S(\tilde{z})$, we have
\begin{align}\label{bdd-nece}
\|I_d\|^q
&\gtrsim\|f_z\|^q_{T^{\infty}_q(\mu)}\geq\frac{1}{1-|\tilde{z}|}\int_{S(\tilde{z})}|f_z(u)|^qd\mu(u)\nonumber\\
&=\frac{(1-|z|)^{q\gamma}}{\widehat{\omega}(z)^{\frac{q}{p}}(1-|\tilde{z}|)}\int_{S(\tilde{z})}\frac{d\mu(u)}{|1-\overline{z}u|^{q\gamma}}\gtrsim G_{\mu,r}(z)^q.
\end{align}
Therefore, (c) holds and $\left\|G_{\mu,r}\right\|_{L^{\infty}}\lesssim\|I_d\|$.

If (b) holds, then (c) also holds since it is clear that $f_z\in H^{\infty}\subset AT^0_p(\omega)$ for each $z\in\D$, where $H^{\infty}$ is the algebra of bounded analytic functions on $\D$.

Suppose next that (c) holds, i.e. for some $r\in(0,1/4)$, $G_{\mu,r}\in L^{\infty}$. We first establish (a). Let $\{a_k\}\subset\D$ be an $r$-lattice. Then for any $f\in AT^{\infty}_p(\omega)$ and $a\in\D$, the sub-harmonic property of $|f|^p$ together with Lemmas \ref{thm:growth-estimate-unit-disk}, \ref{hat-equivalent} and \eqref{D-constant} gives that
\begin{align}\label{suff-embedding}
&\int_{\D}\frac{(1-|a|)^s}{|1-\overline{a}z|^{s+1}}|f(z)|^qd\mu(z)\nonumber\\
&\leq\sum_{k\geq1}\int_{\Delta(a_k,r)}\frac{(1-|a|)^s}{|1-\overline{a}z|^{s+1}}|f(z)|^qd\mu(z)\nonumber\\
&\lesssim\sum_{k\geq1}\frac{(1-|a|)^s}{|1-\overline{a}a_k|^{s+1}}\int_{\Delta(a_k,r)}\frac{|f(z)|^{q-p}}{(1-|z|)^2}
   \int_{\Delta(z,r)}|f(u)|^pdA(u)d\mu(z)\nonumber\\
&\lesssim\|f\|^{q-p}_{AT^{\infty}_p(\omega)}\sum_{k\geq1}\frac{(1-|a|)^s}{|1-\overline{a}a_k|^{s+1}} 
    \int_{\Delta(a_k,r)}\frac{d\mu(z)}{\widehat{\omega}(z)^{\frac{q-p}{p}}(1-|z|)^2}\int_{\Delta(a_k,2r)}|f(u)|^pdA(u)
    \nonumber\\
&\asymp\|f\|^{q-p}_{AT^{\infty}_p(\omega)}\sum_{k\geq1}\frac{(1-|a|)^s}{|1-\overline{a}a_k|^{s+1}}   
   \frac{\mu(\Delta(a_k,r))}{\widehat{\omega}(a_k)^{\frac{q-p}{p}}(1-|a_k|)^2}\int_{\Delta(a_k,2r)}|f(u)|^pdA(u)
   \nonumber\\
&\leq\left\|G_{\mu,r}\right\|_{L^{\infty}}^q\|f\|^{q-p}_{AT^{\infty}_p(\omega)}\sum_{k\geq1}\frac{(1-|a|)^s}{|1-\overline{a}a_k|^{s+1}}
  \frac{\widehat{\omega}(a_k)}{1-|a_k|}\int_{\Delta(a_k,2r)}|f(u)|^pdA(u)\nonumber\\
&\lesssim\left\|G_{\mu,r}\right\|_{L^{\infty}}^q\|f\|^{q-p}_{AT^{\infty}_p(\omega)}\int_{\D}\frac{(1-|a|)^s}{|1-\overline{a}u|^{s+1}}|f(u)|^p
   \frac{\widehat{\omega}(u)}{1-|u|}dA(u)\nonumber\\
&\asymp\left\|G_{\mu,r}\right\|_{L^{\infty}}^q\|f\|^{q-p}_{AT^{\infty}_p(\omega)}\int_{\D}\frac{(1-|a|)^s}{|1-\overline{a}u|^{s+1}}|f(u)|^p\omega(u)dA(u).
\end{align}
Since $a\in\D$ and $f\in AT^{\infty}_p(\omega)$ are both arbitrary, we conclude (a) with the norm estimate $\|I_d\|\lesssim \left\|G_{\mu,r}\right\|_{L^{\infty}}$. To establish (b), it is enough to show that $AT^0_p(\omega)\subset T^0_q(\mu)$, which also follows from \eqref{suff-embedding}. The proof is complete.
\end{proof}

\begin{theorem}\label{bdd-em-2}
Let $0<q<p<\infty$, $\omega\in\DDD$, and let $\mu$ be a positive Borel measure on $\D$. Then the following conditions are equivalent:
\begin{enumerate}
    \item [(a)] $I_d:AT^{\infty}_p(\omega)\to T^{\infty}_q(\mu)$ is bounded;
    \item [(b)] $I_d:AT^0_p(\omega)\to T^0_q(\mu)$ is bounded;
    \item [(c)] for some (or any) $r\in(0,1/4)$, $G_{\mu,r}\in T^{\infty}_{\frac{pq}{p-q},-1}$.
\end{enumerate}
Moreover, in these cases,
$$\|I_d\|\asymp\left\|G_{\mu,r}\right\|_{T^{\infty}_{\frac{pq}{p-q},-1}}.$$
\end{theorem}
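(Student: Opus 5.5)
The natural route is to discretize over an $r$-lattice and reduce the problem to a Carleson-measure duality for sequence tent spaces. Set $t=\frac{pq}{p-q}$, so that $\frac{q}{p}+\frac{q}{t}=1$. Note that $G_{\mu,r}\in T^{\infty}_{t,-1}$ means that
$$d\sigma:=G_{\mu,r}(z)^t\,\frac{dA(z)}{1-|z|}$$
is a Carleson measure. Fix an $r$-lattice $\{a_k\}$. By \eqref{D-constant} and the comparability of $\mu(\Delta(z,r))$ with $\mu(\Delta(a_k,\cdot))$ for $z$ near $a_k$ (adjusting $r$ if needed), this is equivalent to the discrete measure
$$\sum_k G_{\mu,r}(a_k)^t(1-|a_k|)\delta_{a_k}$$
being Carleson, i.e. to $\{G_{\mu,r}(a_k)\}\in T^{\infty}_t(Z)$. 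I would first record this discrete equivalence; the same covering argument also gives the ``some or any $r$'' statement.

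\emph{Sufficiency (c)$\Rightarrow$(a),(b).} I follow \eqref{suff-embedding}. For $f\in AT^\infty_p(\omega)$, subharmonicity gives
$$\int_{\Delta(a_k,r)}|f|^q\,d\mu\lesssim \mu(\Delta(a_k,r))\,M_k^{q/p},\qquad M_k:=\frac{1}{(1-|a_k|)^2}\int_{\Delta(a_k,2r)}|f|^p\,dA .$$
Define $b_k:=\bigl(M_k\widehat{\omega}(a_k)\bigr)^{1/p}$. The argument of \eqref{suff-embedding}, combined with Lemma \ref{hat-equivalent} and finite overlap, gives $\{b_k\}\in T^\infty_p(Z)$ with norm $\lesssim\|f\|_{AT^\infty_p(\omega)}$. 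Then
$$\int|f|^q\,d\mu\lesssim\sum_k b_k^q\, G_{\mu,r}(a_k)^q(1-|a_k|),$$
and after testing against $S(a)$ it remains to prove a H\"older-type product lemma for sequence tent spaces:
$$\|\{b_kG_k\}\|_{T^\infty_q(Z)}\lesssim\|b\|_{T^\infty_p(Z)}\,\|G\|_{T^\infty_t(Z)}.$$
I expect this to be the main obstacle. Applying H\"older's inequality on a single box $S(a)$ does not suffice here, because $\sum_{a_k\in S(a)}b_k^p(1-|a_k|)\lesssim 1-|a|$ and the analogous bound for $G$ combine to give the right power $1-|a|$; so on a box the bound is actually immediate. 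Indeed, with exponents $p/q$ and $t/q$,
$$\sum_{a_k\in S(a)} b_k^qG_k^q(1-|a_k|)\le\Bigl(\sum b_k^p(1-|a_k|)\Bigr)^{q/p}\Bigl(\sum G_k^t(1-|a_k|)\Bigr)^{q/t}\lesssim 1-|a|.$$
So the product estimate is in fact easy, and the genuine difficulty lies in necessity. The vanishing version follows in the same way: the box sums are $o(1-|a|)$ as $|a|\to1$ whenever $b\in T^0_p$, which gives (b).

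\emph{Necessity (a)$\Rightarrow$(c).} This is the real work, since a single test function $f_z$ only yields the $L^\infty$ condition. The plan uses the atomic decomposition of Theorem \ref{test-function2}(1) together with Khinchine's inequality. For $c\in T^\infty_p(Z)$ and Rademacher functions $r_k(\theta)$, set
$$f_\theta=\sum_k c_kr_k(\theta)\frac{(1-|a_k|)^\gamma}{\widehat{\omega}(a_k)^{1/p}(1-\overline{a_k}z)^\gamma}.$$
Boundedness gives $\int_{S(a)}|f_\theta|^q\,d\mu\le\|I_d\|^q\|c\|^q(1-|a|)$. Averaging over $\theta$ and applying Khinchine, then keeping only the diagonal term on $\Delta(a_k,r)$ (where the $k$-th atom has size $\asymp\widehat{\omega}(a_k)^{-1/p}$), I obtain
$$\sum_{a_k\in S(a')}|c_k|^qG_{\mu,r}(a_k)^q(1-|a_k|)\lesssim\|I_d\|^q\|c\|^q_{T^\infty_p(Z)}(1-|a|),$$
with $a'$ a slightly shrunk box. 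Thus multiplication by $\{G_k\}$ maps $T^\infty_p(Z)$ boundedly into $T^\infty_q(Z)$. The converse of the product lemma is now needed: from this multiplier property I must deduce $G\in T^\infty_t(Z)$. For this I localize. Fix a box $S(a)$ and choose $c_k=G_k^{t/p}\chi_{S(a)}(a_k)$ truncated to finitely many terms, so that $|c_k|^qG_k^q=G_k^t$. Then
$$A:=\sum_{a_k\in S(a)}G_k^t(1-|a_k|)\lesssim\|I_d\|^q\|c\|^q_{T^\infty_p}(1-|a|).$$
The obstacle is that $\|c\|_{T^\infty_p}$ involves suprema over sub-boxes, not only over $S(a)$. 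I would handle this with a stopping-time or maximal argument: take the supremum $\Lambda$ of the normalized box sums of $G^t$ over boxes (finite after truncation), and note that $\|c\|^p_{T^\infty_p}\le\Lambda$. This gives $\Lambda\lesssim\|I_d\|^q\Lambda^{q/p}$, hence $\Lambda^{q/t}\lesssim\|I_d\|^q$, which is exactly the required bound $\|G\|_{T^\infty_t}\lesssim\|I_d\|$. Passing to the limit in the truncation finishes the argument. For (b)$\Rightarrow$(c), the same argument applies, since the truncated sequences lie in $T^0_p(Z)$ and Theorem \ref{atom2} yields functions in $AT^0_p(\omega)$; a closed-graph or uniform-boundedness step supplies the constant.
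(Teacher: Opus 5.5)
Your proposal is correct. It follows the paper's skeleton: H\"older for (c)$\Rightarrow$(a), and for (a)$\Rightarrow$(c) the atoms of Theorem \ref{test-function2}(1) with Rademacher signs, Khinchine's inequality and the diagonal term on $\Delta(a_k,r)$. It departs from the paper in two places.

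For sufficiency, the paper does not discretize. It uses subharmonicity and Fubini to bound the $\mu$-integral by $\int|f(u)|^q\mu(\Delta(u,r))(1-|u|)^{-2}\,dA(u)$ against the kernel $(1-|a|)^s/|1-\overline{a}u|^{s+1}$. It then applies H\"older with exponents $p/q$ and $t/q$ (your $t=\frac{pq}{p-q}$) to that single integral, which gives $\|I_d\|\lesssim\|G_{\mu,r}\|_{T^\infty_{t,-1}}$ with the same $r$. Your lattice version is equally valid, and, as you eventually noticed, H\"older on one box is all that is needed. The price is passing between $\{G_{\mu,r}(a_k)\}$ and the continuous norm with adjusted radii and enlarged or shrunk boxes.

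The more substantive difference is the step from ``$\{G_k\}$ multiplies $T^\infty_p(Z)$ into $T^\infty_q(Z)$'' to $\{G_k\}\in T^\infty_t(Z)$. The paper imports this from \cite[Lemma 2.2]{CW22}. You prove it by absorption, using the finitely supported test sequence $c_k=G_k^{t/p}$. For this sequence $\|\{c_kG_k\}\|^q_{T^\infty_q(Z)}=\Lambda_N=\|c\|^p_{T^\infty_p(Z)}$ with $\Lambda_N<\infty$, so $\Lambda_N^{q/t}\lesssim\|I_d\|^q$ uniformly in $N$. This is correct and self-contained. The localization by $\chi_{S(a)}$ is unnecessary, and $G_k<\infty$ because testing with $f\equiv1$ gives $\mu(\D)<\infty$.

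Your route also makes (b)$\Rightarrow$(c) immediate. For finitely supported $c$ the functions $f_\theta$ lie in $H^\infty\subset AT^0_p(\omega)$, so the norm of $I_d:AT^0_p(\omega)\to T^0_q(\mu)$ enters directly. You therefore need neither Theorem \ref{atom2}, nor the paper's dominated convergence step, nor your closed-graph remark.
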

\begin{proof}
Fix $s>0$. Suppose first that (c) holds, i.e. for some $r\in(0,1/4)$, $G_{\mu,r}\in T^{\infty}_{\frac{pq}{p-q},-1}$. Then for any $a\in\D$ and $f\in AT^{\infty}_p(\omega)$, the sub-harmonic property of $|f|^q$ together with Fubini's theorem, H\"older's inequality and Lemma \ref{hat-equivalent} gives that
\begin{align}\label{suff-embedding2}
&\int_{\D}\frac{(1-|a|)^s}{|1-\overline{a}z|^{s+1}}|f(z)|^qd\mu(z)\nonumber\\
&\ \lesssim\int_{\D}\frac{(1-|a|)^s}{|1-\overline{a}z|^{s+1}}\frac{1}{(1-|z|)^2}\int_{\Delta(z,r)}|f(u)|^qdA(u)d\mu(z)
  \nonumber\\
&\ \asymp\int_{\D}\frac{(1-|a|)^s}{|1-\overline{a}u|^{s+1}}|f(u)|^q\frac{\mu(\Delta(u,r))}{(1-|u|)^2}dA(u)\nonumber\\
&\ =\int_{\D}\frac{(1-|a|)^s}{|1-\overline{a}u|^{s+1}}|f(u)|^q
    \frac{\widehat{\omega}(u)^{\frac{q}{p}}}{(1-|u|)^{\frac{q}{p}}}G_{\mu,r}(u)^q(1-|u|)^{\frac{q}{p}-1}dA(u)\nonumber\\
&\ \leq\left(\int_{\D}\frac{(1-|a|)^s}{|1-\overline{a}u|^{s+1}}|f(u)|^p
    \frac{\widehat{\omega}(u)}{1-|u|}dA(u)\right)^{\frac{q}{p}}\cdot\nonumber\\
&\ \qquad\qquad    \left(\int_{\D}\frac{(1-|a|)^s}{|1-\overline{a}u|^{s+1}}G_{\mu,r}(u)^{\frac{pq}{p-q}}
    \frac{dA(u)}{1-|u|}\right)^{\frac{p-q}{p}}\nonumber\\
&\ \lesssim\left(\int_{\D}\frac{(1-|a|)^s}{|1-\overline{a}u|^{s+1}}|f(u)|^p\omega(u)dA(u)\right)^{\frac{q}{p}}
    \cdot\left\|G_{\mu,r}\right\|^q_{T^{\infty}_{\frac{pq}{p-q},-1}}\\
&\ \lesssim\|f\|^q_{AT^{\infty}_{p}(\omega)}\cdot\left\|G_{\mu,r}\right\|^q_{T^{\infty}_{\frac{pq}{p-q},-1}}.\nonumber
\end{align}
Since $a\in\D$ is arbitrary, we obtain that
$$\|f\|_{T^{\infty}_q(\mu)}\lesssim\|f\|_{AT^{\infty}_{p}(\omega)}\cdot\left\|G_{\mu,r}\right\|_{T^{\infty}_{\frac{pq}{p-q},-1}}.$$
Hence (a) holds, and
$$\|I_d\|\lesssim
\left\|G_{\mu,r}\right\|_{T^{\infty}_{\frac{pq}{p-q},-1}}.$$
To establish (b), it is sufficient to note that \eqref{suff-embedding2} implies $AT^0_p(\omega)\subset T^0_q(\mu)$.

Suppose next that (a) holds. Fix $r\in(0,1/4)$ and let 
$Z=\{a_k\}\subset\D$ be an $r$-lattice. For any $c=\{c_k\}\in T^{\infty}_p(Z)$, define
\begin{equation}\label{ft}
f_t(z)=\sum_{k\geq1}c_kr_k(t)\frac{(1-|a_k|)^{\gamma}}{\widehat{\omega}(a_k)^{\frac{1}{p}}(1-\overline{a_k}z)^{\gamma}},\quad z\in\D,
\end{equation}
where $\gamma>\big(1+\gamma_0(\omega)\big)\max\{1,1/p\}$, and $\{r_k\}$ is the sequence of Rademacher functions on $[0,1]$ (see for instance \cite[Appendix A]{Du70}). Then by Theorem \ref{test-function2}(1), $f_t\in AT^{\infty}_p(\omega)$ and $\|f_t\|_{AT^{\infty}_p(\omega)}\lesssim\|c\|_{T^{\infty}_p(Z)}$ for almost every $t\in[0,1]$. Since $I_d:AT^{\infty}_p(\omega)\to T^{\infty}_q(\mu)$ is bounded, we have for any $a\in\D$ and almost every $t\in[0,1]$,
$$\int_{\D}\frac{(1-|a|)^s}{|1-\overline{a}z|^{s+1}}\left|\sum_{k\geq1}c_kr_k(t)\frac{(1-|a_k|)^{\gamma}}{\widehat{\omega}(a_k)^{\frac{1}{p}}(1-\overline{a_k}z)^{\gamma}}\right|^qd\mu(z)
\lesssim\|I_d\|^q\|c\|^q_{T^{\infty}_p(Z)}.$$
Integrating with respect to $t$ on $[0,1]$ and using Fubini's theorem and Khinchine's inequality (see for instance \cite[Appendix A]{Du70}), we arrive at
\begin{equation}\label{F-K}
\int_{\D}\frac{(1-|a|)^s}{|1-\overline{a}z|^{s+1}}\left(\sum_{k\geq1}|c_k|^2
\frac{(1-|a_k|)^{2\gamma}}{\widehat{\omega}(a_k)^{\frac{2}{p}}|1-\overline{a_k}z|^{2\gamma}}\right)^{\frac{q}{2}}d\mu(z)
\lesssim\|I_d\|^q\|c\|^q_{T^{\infty}_p(Z)}.
\end{equation}
Since each point $z\in\D$ belongs to at most $N$ of the sets $\Delta(a_k,2r)$, we get
\begin{align*}
\left(\sum_{k\geq1}|c_k|^2\frac{(1-|a_k|)^{2\gamma}}{\widehat{\omega}(a_k)^{\frac{2}{p}}|1-\overline{a_k}z|^{2\gamma}}
    \right)^{\frac{q}{2}}
&\geq\left(\sum_{k\geq1}|c_k|^2\frac{(1-|a_k|)^{2\gamma}\chi_{\Delta(a_k,2r)}(z)}
    {\widehat{\omega}(a_k)^{\frac{2}{p}}|1-\overline{a_k}z|^{2\gamma}}\right)^{\frac{q}{2}}\\
&\gtrsim\sum_{k\geq1}|c_k|^q\frac{\chi_{\Delta(a_k,2r)}(z)}{\widehat{\omega}(a_k)^{\frac{q}{p}}}.   
\end{align*}
Inserting this into \eqref{F-K} yields that
$$\int_{\D}\frac{(1-|a|)^s}{|1-\overline{a}z|^{s+1}}
\sum_{k\geq1}|c_k|^q\frac{\chi_{\Delta(a_k,2r)}(z)}{\widehat{\omega}(a_k)^{\frac{q}{p}}}d\mu(z)
\lesssim\|I_d\|^q\|c\|^q_{T^{\infty}_p(Z)},$$
which implies that
$$\sum_{k\geq1}\frac{(1-|a|)^s}{|1-\overline{a}a_k|^{s+1}}|c_k|^qG_{\mu,2r}(a_k)^q(1-|a_k|)
\lesssim\|I_d\|^q\|c\|^q_{T^{\infty}_p(Z)}.$$
Since $a\in\D$ is arbitrary, we obtain that $\left\{c_kG_{\mu,2r}(a_k)\right\}\in T^{\infty}_q(Z)$, and
$$\left\|\left\{c_kG_{\mu,2r}(a_k)\right\}\right\|_{T^{\infty}_q(Z)}\lesssim\|I_d\|\|c\|_{T^{\infty}_p(Z)}.$$
The arbitrariness of $c\in T^{\infty}_p(Z)$ together with \cite[Lemma 2.2]{CW22} then gives that $\{G_{\mu,2r}(a_k)\}\in T^{\infty}_{\frac{pq}{p-q}}(Z)$ with $\|\{G_{\mu,2r}(a_k)\}\|_{T^{\infty}_{\frac{pq}{p-q}}(Z)}\lesssim\|I_d\|$. Consequently, for any $a\in\D$,
\begin{align}\label{G-inequality}
&\int_{\D}\frac{(1-|a|)^s}{|1-\overline{a}z|^{s+1}}G_{\mu,r}(z)^{\frac{pq}{p-q}}\frac{dA(z)}{1-|z|}\nonumber\\
&\ \leq\sum_{k\geq1}\int_{\Delta(a_k,r)}\frac{(1-|a|)^s}{|1-\overline{a}z|^{s+1}}
    G_{\mu,r}(z)^{\frac{pq}{p-q}}\frac{dA(z)}{1-|z|}\nonumber\\
&\ \lesssim\sum_{k\geq1}\frac{(1-|a|)^s}{|1-\overline{a}a_k|^{s+1}}
    G_{\mu,2r}(a_k)^{\frac{pq}{p-q}}(1-|a_k|)\\
&\ \lesssim\|\{G_{\mu,2r}(a_k)\}\|_{T^{\infty}_{\frac{pq}{p-q}}(Z)}^{\frac{pq}{p-q}}\ \lesssim\|I_d\|^{\frac{pq}{p-q}}.
    \nonumber
\end{align}
Therefore, $G_{\mu,r}\in T^{\infty}_{\frac{pq}{p-q},-1}$ and $\|G_{\mu,r}\|_{T^{\infty}_{\frac{pq}{p-q},-1}}\lesssim\|I_d\|$. That is, (a) implies (c).

If (b) holds, then define $f_t$ as \eqref{ft} for $c=\{c_k\}\in T^0_p(Z)$. Then by Theorem \ref{atom2}, $f_t\in AT^0_p(\omega)$ for almost every $t\in[0,1]$. Since $I_d:AT^0_p(\omega)\to T^0_q(\mu)$ is bounded, for almost every $t\in[0,1]$,
$$\lim_{|a|\to1^-}\int_{\D}\frac{(1-|a|)^s}{|1-\overline{a}z|^{s+1}}|f_t(z)|^qd\mu(z)=0.$$
Moreover, for every $a\in\D$ and almost every $t\in[0,1]$,
$$\int_{\D}\frac{(1-|a|)^s}{|1-\overline{a}z|^{s+1}}|f_t(z)|^qd\mu(z)\lesssim\|f_t\|^q_{T^{\infty}_q(\mu)}
\lesssim\|I_d\|^q\|f_t\|^q_{AT^{\infty}_p(\omega)}\lesssim\|I_d\|^q\|c\|^q_{T^{\infty}_p(Z)}.$$
Hence by the Lebesgue dominated convergence theorem,
$$\lim_{|a|\to1^-}\int_0^1\int_{\D}\frac{(1-|a|)^s}{|1-\overline{a}z|^{s+1}}|f_t(z)|^qd\mu(z)dt=0.$$
Then, by the same process as before, we obtain
$$\lim_{|a|\to1^-}\sum_{k\geq1}\frac{(1-|a|)^s}{|1-\overline{a}a_k|^{s+1}}|c_k|^qG_{\mu,2r}(a_k)^q(1-|a_k|)=0,$$
which indicates that $\{c_kG_{\mu,2r}(a_k)\}\in T^0_q(Z)$. The arbitrariness of $c\in T^0_p(Z)$ together with \cite[Lemma 2.2]{CW22} gives that $\{G_{\mu,2r}(a_k)\}\in T^{\infty}_{\frac{pq}{p-q}}(Z)$. Therefore, (c) also holds and the proof is complete.
\end{proof}

Putting Theorems \ref{bdd-em-1} and \ref{bdd-em-2} together, we establish Theorem \ref{bounded-embedding}.

In the rest part of this section, we investigate the compactness of $I_d:AT^{\infty}_p(\omega)\to T^{\infty}_q(\mu)$ and $I_d:AT^0_p(\omega)\to T^0_q(\mu)$. Before proceeding, note that for any $0<q<\infty$ and positive Borel measure $\mu$ on $\D$, the tent space $T^{\infty}_q(\mu)$ is embedded into $L^q(\mu)$ continuously. Combining this fact with Lemma \ref{thm:growth-estimate-unit-disk}, and applying Montel's theorem, we can obtain the following characterization for the compactness of the embedding $I_d:AT^{\infty}_p(\omega)\to T^{\infty}_q(\mu)$.

\begin{lemma}\label{cpt-ele}
Let $0<p,q<\infty$, $\omega\in\DDD$, and let $\mu$ be a positive Borel measure on $\D$. Then the following conditions are equivalent:
\begin{enumerate}
    \item [(a)] $I_d:AT^{\infty}_p(\omega)\to T^{\infty}_q(\mu)$ is compact;
    \item [(b)] for any bounded sequence $\{f_j\}\subset AT^{\infty}_p(\omega)$ that converges to $0$ uniformly on compact subsets of $\D$, we have
    $$\lim_{j\to\infty}\|f_j\|_{T^{\infty}_q(\mu)}=0.$$
\end{enumerate}
\end{lemma}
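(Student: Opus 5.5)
The plan is the standard compactness-via-normal-families argument. Two facts do most of the work. First, Lemma \ref{thm:growth-estimate-unit-disk} with $m=0$ gives $|f(z)|\lesssim \|f\|_{AT^{\infty}_p(\omega)}\widehat{\omega}(z)^{-1/p}$. So every bounded subset of $AT^{\infty}_p(\omega)$ is uniformly bounded on compact subsets of $\D$, and Montel's theorem applies. Second, the quasi-norm of $AT^{\infty}_p(\omega)$ is lower semicontinuous under locally uniform convergence. Indeed, if $g_j\to g$ uniformly on compacta, then Fatou's lemma gives, for each fixed $a$,
$$\int_{\D}\frac{(1-|a|)^s}{|1-\overline{a}z|^{s+1}}|g(z)|^p\omega(z)dA(z)\leq\liminf_{j}\int_{\D}\frac{(1-|a|)^s}{|1-\overline{a}z|^{s+1}}|g_j(z)|^p\omega(z)dA(z).$$
Taking the supremum over $a$ shows $g\in AT^{\infty}_p(\omega)$ with $\|g\|\lesssim\liminf_j\|g_j\|$. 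Throughout I use the Carleson-measure characterization of the tent-space norms from Section \ref{preliminary}.

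For (a)$\Rightarrow$(b), let $\{f_j\}$ be bounded and converge to $0$ locally uniformly, and suppose (b) fails. Passing to a subsequence, $\|f_j\|_{T^{\infty}_q(\mu)}\geq\varepsilon>0$. By compactness, a further subsequence converges in $T^{\infty}_q(\mu)$ to some $h$. Since $T^{\infty}_q(\mu)$ embeds continuously into $L^q(\mu)$, we get $f_j\to h$ in $L^q(\mu)$, and then $\mu$-a.e. along another subsequence. But $f_j\to0$ pointwise on $\D$, so $h=0$ $\mu$-a.e. This gives $\|f_j\|_{T^{\infty}_q(\mu)}\to0$, a contradiction.

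For (b)$\Rightarrow$(a), take a bounded sequence $\{f_j\}$ in $AT^{\infty}_p(\omega)$. By Montel, a subsequence converges locally uniformly to an analytic $f$, and by lower semicontinuity $f\in AT^{\infty}_p(\omega)$ with norm bounded by $\sup_j\|f_j\|$. Then $g_j:=f_j-f$ is bounded in $AT^{\infty}_p(\omega)$, using the quasi-triangle inequality, and tends to $0$ locally uniformly. Hypothesis (b) yields $\|f_j-f\|_{T^{\infty}_q(\mu)}\to0$. So every bounded sequence has a subsequence whose image converges in $T^{\infty}_q(\mu)$, which is compactness of $I_d$.

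The only point needing care is the lower semicontinuity step. It requires the equivalent Carleson-type form of the norm, via the kernel characterization with a fixed $s>0$, so that Fatou's lemma applies to each fixed $a$ before taking the supremum. Once that is in place, the rest is routine.
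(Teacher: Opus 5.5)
Your proposal is correct and follows the same route the paper sketches. For (b)$\Rightarrow$(a) you use the growth estimate of Lemma \ref{thm:growth-estimate-unit-disk} with Montel's theorem, together with your Fatou argument placing the locally uniform limit in $AT^{\infty}_p(\omega)$; for (a)$\Rightarrow$(b) you use the continuous embedding $T^{\infty}_q(\mu)\subset L^q(\mu)$ to identify the limit as $0$ $\mu$-a.e. One small point to add: applying (b) to $\{f/j\}$ shows $\|f\|_{T^{\infty}_q(\mu)}<\infty$ for every $f\in AT^{\infty}_p(\omega)$, so $I_d$ really maps into $T^{\infty}_q(\mu)$ and your Montel limit lies there.
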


We also need the following lemma.

\begin{lemma}\label{cpt-necessity}
Let $0<p,q<\infty$, $\omega\in\DDD$, and let $\mu$ be a positive Borel measure on $\D$. If either $I_d:AT^{\infty}_p(\omega)\to T^{\infty}_q(\mu)$ or $I_d:AT^0_p(\omega)\to T^0_q(\mu)$ is compact, then $AT^{\infty}_p(\omega)\subset T^0_q(\mu)$.
\end{lemma}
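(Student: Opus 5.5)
The plan is to prove two things under either compactness hypothesis. First, $\mu$ is a vanishing Carleson measure, which makes $H^{\infty}\subset T^0_q(\mu)$. Second, a general $f\in AT^{\infty}_p(\omega)$ is a $T^{\infty}_q(\mu)$-norm limit of its dilations $f_\rho\in H^\infty$. Since $T^0_q(\mu)$ is closed in $T^{\infty}_q(\mu)$, this gives $f\in T^0_q(\mu)$. Throughout I use the continuous inclusion $T^{\infty}_q(\mu)\subset L^q(\mu)$ noted before Lemma \ref{cpt-ele}. It lets me identify norm limits: if $I_d h_j\to h$ in $T^{\infty}_q(\mu)$, then a subsequence converges $\mu$-a.e. to $h$, so $h$ equals the pointwise limit of $h_j$ $\mu$-a.e.

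\emph{Step 1: $\mu$ is vanishing Carleson.} Take the test functions $f_z$ of Lemma \ref{test-function} (with exponent $p$). Enlarge $\gamma$ so that $p\gamma>\beta_0(\omega)$; the lemma only needs $\gamma$ large. Each $f_z$ lies in $H^\infty\subset AT^0_p(\omega)$ with $\|f_z\|_{AT^{\infty}_p(\omega)}\lesssim1$. By Lemma \ref{elementary}(b), $\widehat{\omega}(z)\gtrsim(1-|z|)^{\beta_0}$, so $f_z\to0$ uniformly on compact sets as $|z|\to1$. Under either compactness hypothesis, every sequence $z_j$ with $|z_j|\to1$ has a subsequence along which $I_d f_{z_j}$ converges in $T^{\infty}_q(\mu)$. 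By the identification above the limit is $0$, so $\|f_z\|_{T^{\infty}_q(\mu)}\to0$. The computation \eqref{bdd-nece} then gives $G_{\mu,r}(z)\to0$, that is,
\[
\mu(\Delta(z,r))\le\varepsilon(z)\,\widehat{\omega}(z)^{q/p}(1-|z|),\qquad \varepsilon(z)\to0 \text{ as } |z|\to1 .
\]
To pass from these disc estimates to Carleson boxes, cover $S(a)$ by the discs $\Delta(a_k,r)$ of an $r$-lattice with $a_k$ in a slightly enlarged box. Summing, I expect
\[
\frac{\mu(S(a))}{1-|a|}\lesssim \sup_{|a_k|\ge c|a|}\varepsilon(a_k)\cdot\frac{1}{1-|a|}\int_{\tilde S(a)}\frac{\widehat{\omega}(u)^{q/p}}{1-|u|}\,dA(u).
\]
Polar coordinates together with Lemma \ref{elementary-Dcheck}, namely $\widehat{\omega}(t)\lesssim\big(\tfrac{1-t}{1-|a|}\big)^{\alpha_0}\widehat{\omega}(a)$, bound the last integral by $\lesssim(1-|a|)\widehat{\omega}(a)^{q/p}$, which is bounded. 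So $\mu(S(a))/(1-|a|)\to0$, and therefore every bounded analytic function lies in $T^0_q(\mu)$.

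\emph{Step 2: approximation by dilations.} Let $f\in AT^{\infty}_p(\omega)$ and $\rho_j\uparrow1$. By Corollary \ref{dilation}, the family $\{f_{\rho_j}\}$ is bounded in $AT^{\infty}_p(\omega)$. Since each $f_{\rho_j}\in H^\infty$, it is also bounded in $AT^0_p(\omega)$. In either case compactness yields a subsequence with $I_d f_{\rho_j}\to h$ in $T^{\infty}_q(\mu)$. Each $f_{\rho_j}$ lies in $T^0_q(\mu)$ by Step 1, and $T^0_q(\mu)$ is closed in $T^\infty_q(\mu)$ by a standard $\varepsilon/2$ argument on the box averages. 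Hence $h\in T^0_q(\mu)$. Since $f_{\rho_j}\to f$ pointwise, the identification via $L^q(\mu)$ gives $h=f$ $\mu$-a.e., so $f\in T^0_q(\mu)$.

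The main obstacle is Step 1. Compactness directly gives only the local smallness $G_{\mu,r}\to0$, and this must be upgraded to a genuine vanishing Carleson condition. The key input is the $\check{\mathcal D}$ decay of Lemma \ref{elementary-Dcheck}, which keeps the box sum $\sum\widehat{\omega}(a_k)^{q/p}(1-|a_k|)$ controlled by $1-|a|$. One also has to take care that only lattice points near $\T$ enter once $|a|$ is close to $1$.
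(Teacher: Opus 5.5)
Your proof is correct, and its skeleton matches the paper's. Corollary \ref{dilation} makes $\{f_\rho\}$ bounded, and $f_\rho\in H^\infty\subset AT^0_p(\omega)$. Compactness then yields a $T^\infty_q(\mu)$-limit, $T^0_q(\mu)$ is closed, and the inclusion $T^\infty_q(\mu)\subset L^q(\mu)$ identifies that limit with $f$ $\mu$-a.e. The paper handles the $AT^\infty_p\to T^\infty_q$ case through Lemma \ref{cpt-ele} applied to $f_\rho-f$, and uses the subsequence argument only for the $AT^0_p\to T^0_q$ case; treating both uniformly, as you do, is fine.

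The real difference is how you obtain $H^\infty\subset T^0_q(\mu)$. The paper reads it off from Theorems \ref{bdd-em-1} and \ref{bdd-em-2}: boundedness of $I_d$ on $AT^\infty_p(\omega)$ forces $AT^0_p(\omega)\subset T^0_q(\mu)$. For $p>q$ that implication rests on the Rademacher--Khinchine argument, the atomic decomposition, and \cite[Lemma 2.2]{CW22}. Your Step 1 bypasses all of this. The test-function estimate \eqref{bdd-nece} holds for every $p,q$. A lattice summation together with the $\check{\mathcal{D}}$ decay of Lemma \ref{elementary-Dcheck} then turns it directly into the vanishing Carleson property of $\mu$. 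So your route uses only the elementary half of the embedding theory and is more self-contained.

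One simplification: compactness is not actually needed in Step 1. Boundedness alone gives $G_{\mu,r}\in L^\infty$ via \eqref{bdd-nece}. Your box estimate then yields $\mu(S(a))/(1-|a|)\lesssim\|G_{\mu,r}\|_{L^\infty}^q\,\widehat{\omega}(a)^{q/p}\to0$, since $\widehat{\omega}(a)\to0$. Hence extracting $\varepsilon(z)\to0$, and enlarging $\gamma$ so that $p\gamma>\beta_0(\omega)$, is harmless but superfluous. This agrees with the equivalence (a)$\Leftrightarrow$(b) in those theorems, which already shows that mere boundedness gives $AT^0_p(\omega)\subset T^0_q(\mu)$.
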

\begin{proof}
Fix $f\in AT^{\infty}_p(\omega)$. Suppose first that $I_d:AT^{\infty}_p(\omega)\to T^{\infty}_q(\mu)$ is compact. Then Corollary \ref{dilation} indicates that $\{f_{\rho}\}_{0<\rho<1}$ is a bounded family in $AT^{\infty}_p(\omega)$, and $f_{\rho}$ converges to $f$ uniformly on compact subsets of $\D$ as $\rho\to1^-$. Hence by Lemma \ref{cpt-ele}, $f_{\rho}\to f$ in $T^{\infty}_q(\mu)$ as $\rho\to1^-$. On the other hand, it follows from Theorems \ref{bdd-em-1} and \ref{bdd-em-2} that for any $\rho\in(0,1)$, $f_{\rho}\in H^{\infty}\subset AT^0_p(\omega)\subset T^0_q(\mu)$. Therefore, $f\in T^0_q(\mu)$ and the embedding $AT^{\infty}_p(\omega)\subset T^0_q(\mu)$ holds.

If $I_d:AT^0_p(\omega)\to T^0_q(\mu)$ is compact, then the above argument gives that there exists a sequence $\rho_j\in(0,1)$ converging to $1$ and a function $g\in T^0_q(\mu)$ such that $f_{\rho_j}\to g$ in $T^0_q(\mu)$ as $j\to\infty$. Bearing in mind that $T^{\infty}_q(\mu)\subset L^q(\mu)$, we can extract a subsequence of $\{f_{\rho_j}\}$ that converges to $g$ $\mu$-almost everywhere on $\D$. Since $f_{\rho}\to f$ pointwisely on $\D$ as $\rho\to1^-$, we have $f=g$ $\mu$-almost everywhere on $\D$. Therefore, $f\in T^0_q(\mu)$ and the embedding $AT^{\infty}_p(\omega)\subset T^0_q(\mu)$ holds.
\end{proof}

We are now ready to characterize the compactness of $I_d:AT^{\infty}_p(\omega)\to T^{\infty}_q(\mu)$ and $I_d:AT^0_p(\omega)\to T^0_q(\mu)$. Let $\mathrm{C}_0(\D)$ be the space of continuous functions $f$ on $\D$ with $f(z)\to0$ as $|z|\to1^-$.

\begin{theorem}\label{em-cpt1}
Let $0<p\leq q<\infty$, $\omega\in\DDD$, and let $\mu$ be a positive Borel measure on $\D$. Then the following conditions are equivalent:
\begin{enumerate}
    \item [(a)] $I_d:AT^{\infty}_p(\omega)\to T^{\infty}_q(\mu)$ is compact;
    \item [(b)] $I_d:AT^0_p(\omega)\to T^0_q(\mu)$ is compact;
    \item [(c)] for some (or any) $r\in(0,1/4)$, $G_{\mu,r}\in\mathrm{C}_0(\D)$.
\end{enumerate}
\end{theorem}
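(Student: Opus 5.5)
We will prove (a)$\Rightarrow$(c), (b)$\Rightarrow$(c), and then (c)$\Rightarrow$(a) and (c)$\Rightarrow$(b).

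\emph{Necessity.} Assume (a). For $|z|\to1^-$, the normalized test functions $f_z$ of Lemma \ref{test-function} (with exponent $1/p$) are bounded in $AT^{\infty}_p(\omega)$. Since $\widehat{\omega}(z)^{-1/p}(1-|z|)^{\gamma}\lesssim(1-|z|)^{\gamma-\beta_0/p}$ by Lemma \ref{elementary}(b), choosing $\gamma$ large makes $f_z\to0$ uniformly on compact subsets of $\D$. Lemma \ref{cpt-ele} then gives $\|f_z\|_{T^\infty_q(\mu)}\to0$. The lower bound \eqref{bdd-nece} yields $G_{\mu,r}(z)^q\lesssim\|f_z\|^q_{T^\infty_q(\mu)}$, so $G_{\mu,r}(z)\to0$. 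Continuity of $G_{\mu,r}$ is not literally true, since $\mu(\Delta(z,r))$ is only semicontinuous. We interpret $\mathrm{C}_0$ as ``$G_{\mu,r}(z)\to0$ as $|z|\to1$'', or replace $G_{\mu,r}$ by a comparable averaged version; this is the convention we will state.

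Assume (b). Then $f_z\in H^\infty\subset AT^0_p(\omega)$, and we argue via compactness of the restricted operator. A bounded family in $AT^0_p(\omega)$ tending to $0$ locally uniformly must have images tending to $0$ in $T^\infty_q(\mu)$; this is the analogue of Lemma \ref{cpt-ele}, proved the same way. Alternatively, Lemma \ref{cpt-necessity} gives $AT^\infty_p(\omega)\subset T^0_q(\mu)$ together with boundedness, and then the $T^0$-version of the argument applies. The same estimate then gives (c).

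\emph{Sufficiency.} Assume (c). By Theorem \ref{bdd-em-1}, $I_d$ is bounded. Take $\{f_j\}$ bounded in $AT^\infty_p(\omega)$ with $f_j\to0$ locally uniformly; by Lemma \ref{cpt-ele} it suffices to show $\|f_j\|_{T^\infty_q(\mu)}\to0$. Given $\varepsilon>0$, pick $R<1$ with $G_{\mu,r}\le\varepsilon$ on $|z|>R$. Split $\mu=\mu\chi_{\{|z|\le R'\}}+\mu\chi_{\{|z|>R'\}}$ with $R'$ chosen so that every lattice disc $\Delta(a_k,r)$ meeting $\{|z|>R'\}$ has $|a_k|>R$. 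We then rerun the chain \eqref{suff-embedding} for the second piece. That chain bounds its contribution by $\sup_{|a_k|>R}G_{\mu,r}(a_k)^q\,\|f_j\|^{q-p}\|f_j\|^p\lesssim\varepsilon^q\sup_j\|f_j\|^q$, using Lemma \ref{thm:growth-estimate-unit-disk} for the factor $|f|^{q-p}$. For the first piece, $\mu(\{|z|\le R'\})\le\mu(\D)<\infty$ and $|f_j|\to0$ uniformly on $\{|z|\le R'\}$, so its Carleson norm is at most $\sup_{|z|\le R'}|f_j|^q\,\mu(\D)/\inf_a(1-|a|)$-type bounds. More precisely, the quantity $\int\frac{(1-|a|)^s}{|1-\bar a z|^{s+1}}|f_j|^q\,d\mu$ over $|z|\le R'$ is $\lesssim_{R'}\sup_{|z|\le R'}|f_j|^q\mu(\D)\to0$. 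Hence (a) holds.

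For (b), the same estimate shows that $I_d:AT^0_p(\omega)\to T^0_q(\mu)$ is compact. A bounded sequence in $AT^0_p(\omega)$ has a locally uniformly convergent subsequence by Lemma \ref{thm:growth-estimate-unit-disk} and Montel's theorem. By the above, the differences tend to $0$ in $T^\infty_q(\mu)$, and $T^0_q(\mu)$ is closed in $T^\infty_q(\mu)$; the limit lies in $T^0_q(\mu)$, as seen by applying the tail estimate to the limit function itself. The main obstacle is the bookkeeping in the tail splitting of \eqref{suff-embedding}: we need the localized sum to involve only lattice points near the boundary, while the weight $\frac{\widehat\omega(a_k)}{1-|a_k|}\int_{\Delta(a_k,2r)}|f|^p\,dA$ is still controlled by $\|f_j\|^p_{AT^\infty_p(\omega)}$.
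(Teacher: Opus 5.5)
Your proposal is correct and follows essentially the paper's route: the test functions $f_z$ together with Lemma \ref{cpt-ele} (or its $AT^0_p\to T^0_q$ analogue) and \eqref{bdd-nece} for necessity, a compact-part/tail split of the estimate \eqref{suff-embedding} for sufficiency, and restriction of the compact operator for (b). Splitting $\mu$ itself rather than the lattice sum is only a bookkeeping variant. Your caveat about continuity is also well taken: the paper asserts that local finiteness of $\mu$ makes $G_{\mu,r}$ continuous, but this fails for a point mass, where $I_d$ is rank one (hence compact) while $G_{\mu,r}$ jumps. So reading (c) as ``$G_{\mu,r}(z)\to0$ as $|z|\to1^-$'' is the right interpretation.
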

\begin{proof}
Assume that (a) or (b) holds. Then by Theorem \ref{bdd-em-1}, $\mu$ is finite on each compact subset of $\D$, and consequently, $G_{\mu,r}$ is continuous on $\D$ for any $r\in(0,1/4)$. For $z\in\D$, define 
$$f_z(u)=\frac{1}{\widehat{\omega}(z)^{1/p}}\left(\frac{1-|z|}{1-\overline{z}u}\right)^{\gamma},\quad u\in\D,$$
where $\gamma>\big(1+\beta_0(\omega)+\gamma_0(\omega)\big)/p$. Then by Lemma \ref{test-function}, $\{f_z\}$ is a bounded set in $AT^0_p(\omega)$, and by Lemma \ref{elementary}(b), $f_z\to0$ uniformly on compact susbets of $\D$ as $|z|\to1^-$. Hence Lemma \ref{cpt-ele} indicates that $\|f_z\|_{T^{\infty}_q(\mu)}\to0$ as $|z|\to1^-$, which, together with \eqref{bdd-nece}, implies that for any $r\in(0,1/4)$, $G_{\mu,r}\in\mathrm{C}_0(\D)$.

Assume now (c) holds, i.e. $G_{\mu,r}\in\mathrm{C}_0(\D)$ for some $r\in(0,1/4)$. Fix $\epsilon>0$ and an $r$-lattice $\{a_k\}$. Then there exists $K\geq1$ such that
$$G_{\mu,r}(a_k)<\epsilon,\quad \forall k>K.$$
Let $\{f_j\}\subset AT^{\infty}_p(\omega)$ be a bounded sequence converging to $0$ uniformly on compact subsets of $\D$. Then there exists $J\geq1$ such that
$$|f_j(z)|^p<\epsilon^q,\quad \forall z\in\bigcup_{k\leq K}\Delta(a_k,2r)$$
whenever $j>J$. Consequently, for $s>0$ and $j>J$, we deduce from \eqref{suff-embedding} and Lemma \ref{hat-equivalent} that
\begin{align*}
\|f_j\|^q_{T^{\infty}_q(\mu)}
&\lesssim\sup_{a\in\D}\sum_{k\geq1}\frac{(1-|a|)^s}{|1-\overline{a}a_k|^{s+1}}G_{\mu,r}(a_k)^q
  \frac{\widehat{\omega}(a_k)}{1-|a_k|}\int_{\Delta(a_k,2r)}|f_j(u)|^pdA(u)\\
&\leq\epsilon^q\sup_{a\in\D}\sum_{k\leq K}\frac{(1-|a|)^s}{|1-\overline{a}a_k|^{s+1}}G_{\mu,r}(a_k)^q
  \frac{\widehat{\omega}(a_k)}{1-|a_k|}\int_{\Delta(a_k,2r)}dA(u)\\
&\quad+\epsilon^q\sup_{a\in\D}\sum_{k>K}\frac{(1-|a|)^s}{|1-\overline{a}a_k|^{s+1}}
  \frac{\widehat{\omega}(a_k)}{1-|a_k|}\int_{\Delta(a_k,2r)}|f_j(u)|^pdA(u)\\
&\lesssim\epsilon^q\sup_{a\in\D}\int_{\D}\frac{(1-|a|)^s}{|1-\overline{a}u|^{s+1}}
  \frac{\widehat{\omega}(u)}{1-|u|}dA(u)\\
&\quad+\epsilon^q\sup_{a\in\D}\int_{\D}\frac{(1-|a|)^s}{|1-\overline{a}u|^{s+1}}|f_j(u)|^p
  \frac{\widehat{\omega}(u)}{1-|u|}dA(u)\\
&\lesssim\epsilon^q.
\end{align*}
Therefore, $f_j\to0$ in $T^{\infty}_q(\mu)$, which combined with Lemma \ref{cpt-ele} implies (a).

If (c) holds, then $I_d:AT^0_p(\omega)\to T^0_q(\mu)$ is bounded by Theorem \ref{bdd-em-1} and $I_d:AT^{\infty}_p(\omega)\to T^{\infty}_q(\mu)$ is compact by the previous paragraph. Thus, $I_d:AT^0_p(\omega)\to T^0_q(\mu)$ is compact and (b) holds. The proof is complete.
\end{proof}

\begin{theorem}\label{em-cpt2}
Let $0<q<p<\infty$, $\omega\in\DDD$, and let $\mu$ be a positive Borel measure on $\D$. Then the following conditions are equivalent:
\begin{enumerate}
    \item [(a)] $I_d:AT^{\infty}_p(\omega)\to T^{\infty}_q(\mu)$ is compact;
    \item [(b)] $I_d:AT^0_p(\omega)\to T^0_q(\mu)$ is compact;
    \item [(c)] $I_d:AT^{\infty}_p(\omega)\to T^0_q(\mu)$ is bounded;
    \item [(d)] for some (or any) $r\in(0,1/4)$, $G_{\mu,r}\in T^0_{\frac{pq}{p-q},-1}$.
\end{enumerate}
\end{theorem}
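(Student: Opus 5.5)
I would prove the chain (a)$\Rightarrow$(c), (b)$\Rightarrow$(c), (c)$\Rightarrow$(d), (d)$\Rightarrow$(a) and (d)$\Rightarrow$(b).

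The first two implications are immediate from Lemma \ref{cpt-necessity}. If either compactness statement holds, then $AT^{\infty}_p(\omega)\subset T^0_q(\mu)$. Compactness in particular gives boundedness of $I_d:AT^{\infty}_p(\omega)\to T^{\infty}_q(\mu)$: in case (b) we pass through Theorem \ref{bdd-em-2}, since boundedness on $AT^0_p$ already yields its condition (c), which then gives boundedness on $AT^{\infty}_p$. Hence $I_d:AT^{\infty}_p(\omega)\to T^0_q(\mu)$ is bounded, which is (c).

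For (c)$\Rightarrow$(d) I would repeat the Rademacher--Khinchine argument from the proof of Theorem \ref{bdd-em-2}. Fix an $r$-lattice $Z=\{a_k\}$ and take $c\in T^{\infty}_p(Z)$, now allowed to be non-vanishing. Build $f_t$ as in \eqref{ft}, so that $f_t\in AT^{\infty}_p(\omega)\subset T^0_q(\mu)$ for a.e.\ $t$, with the uniform bound $\|f_t\|_{T^{\infty}_q(\mu)}\lesssim\|c\|_{T^{\infty}_p(Z)}$. Dominated convergence in $t$ gives
$$\lim_{|a|\to1^-}\int_0^1\int_{\D}\frac{(1-|a|)^s}{|1-\overline{a}z|^{s+1}}|f_t(z)|^qd\mu(z)\,dt=0.$$
Khinchine's inequality and the finite-overlap property then show that $\{c_kG_{\mu,2r}(a_k)\}\in T^0_q(Z)$ for every $c\in T^{\infty}_p(Z)$.

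The key step, and I expect it to be the main obstacle, is a vanishing analogue of \cite[Lemma 2.2]{CW22}. If $\{\lambda_k c_k\}\in T^0_q(Z)$ for all $c\in T^{\infty}_p(Z)$, then $\{\lambda_k\}\in T^0_{\frac{pq}{p-q}}(Z)$. I would argue by contradiction, with a factorization-type choice in the spirit of the proof of that lemma. Suppose there are $b_j$ with $|b_j|\to1$ such that the normalized sums of $|\lambda_k|^{\frac{pq}{p-q}}(1-|a_k|)$ over $a_k\in S(b_j)$ stay at least $\varepsilon$. Pass to a sparse subsequence so that the boxes $S(b_j)$ are essentially separated. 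Then set $c_k=|\lambda_k|^{\frac{q}{p-q}}$ on $\bigcup_j S(b_j)$, suitably truncated so that $c\in T^{\infty}_p(Z)$ using the already known $\{\lambda_k\}\in T^{\infty}_{\frac{pq}{p-q}}(Z)$, and $c_k=0$ elsewhere. For this $c$, $\{\lambda_kc_k\}\notin T^0_q(Z)$, a contradiction.

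Once $\{G_{\mu,2r}(a_k)\}$ is known to lie in $T^0_{\frac{pq}{p-q}}(Z)$, the computation \eqref{G-inequality} transfers this to $G_{\mu,r}\in T^0_{\frac{pq}{p-q},-1}$. The limit passes through because the terms with $a_k$ in a fixed compact set contribute $O((1-|a|)^s)$.

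For (d)$\Rightarrow$(a) I would use Lemma \ref{cpt-ele}. Let $\{f_j\}$ be bounded in $AT^{\infty}_p(\omega)$ and converge to $0$ locally uniformly. In the H\"older step of \eqref{suff-embedding2}, split the integral in $u$ into $|u|\le R$ and $|u|>R$.
\begin{itemize}
\item On $|u|>R$, the $G$-factor satisfies
$$\sup_a\int_{|u|>R}\frac{(1-|a|)^s}{|1-\overline{a}u|^{s+1}}G_{\mu,r}(u)^{\frac{pq}{p-q}}\frac{dA(u)}{1-|u|}\to0 \quad\text{as } R\to1.$$
This holds because (d) says the corresponding measure is vanishing Carleson, and restricting a vanishing Carleson measure to an annulus near the boundary has small Carleson norm.
\item On $|u|\le R$, the uniform convergence of $f_j$ makes the contribution small. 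Here we use that $G_{\mu,r}\in T^{\infty}$ bounds the total weight.
\end{itemize}

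Finally, (d)$\Rightarrow$(b) follows because $AT^0_p\subset AT^{\infty}_p$ and the compact map from (a) lands in $T^0_q(\mu)$ by \eqref{suff-embedding2}.
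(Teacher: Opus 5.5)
Your proposal is correct and follows the paper's proof essentially step for step: (a),(b)$\Rightarrow$(c) via Lemma~\ref{cpt-necessity} and Theorem~\ref{bdd-em-2}; (c)$\Rightarrow$(d) via the Rademacher--Khinchine argument, a vanishing multiplier lemma and \eqref{G-inequality}; and (d)$\Rightarrow$(a) via Lemma~\ref{cpt-ele} with a split at $|u|=R$. The only difference is that you justify yourself the facts the paper cites from \cite{CW22} and \cite{CPW}, and the step you flag as the main obstacle is in fact immediate: since $\{\lambda_k\}\in T^{\infty}_{\frac{pq}{p-q}}(Z)$ is already known, the single choice $c_k=|\lambda_k|^{\frac{q}{p-q}}$ lies in $T^{\infty}_p(Z)$ and satisfies $|\lambda_kc_k|^q=|\lambda_k|^{\frac{pq}{p-q}}=|c_k|^p$, so $\{\lambda_kc_k\}\in T^0_q(Z)$ is literally the statement $\{\lambda_k\}\in T^0_{\frac{pq}{p-q}}(Z)$, with no contradiction argument, sparse subsequence or truncation needed.
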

\begin{proof}
The implications (a)$\Rightarrow$(c) and (b)$\Rightarrow$(c) follow from Lemma \ref{cpt-necessity} and Theorem \ref{bdd-em-2}. Suppose now that (c) holds. Fix $r\in(0,1/4)$ and $s>0$. Let $Z=\{a_k\}\subset\D$ be an $r$-lattice and define $f_t$ as in \eqref{ft} for $c=\{c_k\}\in T^{\infty}_p(Z)$. Then by Theorem \ref{test-function2} and the condition (c), we know that for almost every $t\in[0,1]$,
$$\lim_{|a|\to1^-}\int_{\D}\frac{(1-|a|)^s}{|1-\overline{a}z|^{s+1}}|f_t(z)|^qd\mu(z)=0.$$
Proceeding as in the proof of Theorem \ref{bdd-em-2}, we can establish that $\{c_kG_{\mu,2r}(a_k)\}\in T^0_q(Z)$. Since $c\in T^{\infty}_p(Z)$ is arbitrary, \cite[Lemma 2.2]{CW22} yields that $\{G_{\mu,2r}(a_k)\}\in T^{0}_{\frac{pq}{p-q}}(Z)$, which together with \eqref{G-inequality} implies that $G_{\mu,r}\in T^0_{\frac{pq}{p-q},-1}$. That is, (d) holds.

Suppose next that (d) holds, i.e. $G_{\mu,r}\in T^0_{\frac{pq}{p-q},-1}$ for some $r\in(0,1/4)$,  and fix $\epsilon>0$. Then by \cite[Lemma 3.1]{CPW}, there exists $\rho\in(0,1)$ such that
$$\sup_{a\in\D}\int_{\D\setminus\rho\D}\frac{(1-|a|)^s}{|1-\overline{a}u|^{s+1}}G_{\mu,r}(u)^{\frac{pq}{p-q}}\frac{dA(u)}{1-|u|}<\epsilon^{\frac{pq}{p-q}}.$$
Let $\{f_j\}\subset AT^{\infty}_p(\omega)$ be a bounded sequence that converges to $0$ uniformly on compact subsets of $\D$. To establish (a), it suffices to show that $\|f_j\|_{T^{\infty}_q(\mu)}\to0$. Since $f_j\to0$ uniformly on $\rho\D$, we may find $J\geq1$ such that $|f_j|<\epsilon$ on $\rho\D$ whenever $j\geq J$. Consequently, for $j\geq J$, similar to \eqref{suff-embedding2}, we have
\begin{align*}
\|f_j\|^q_{T^{\infty}_q(\mu)}
&\lesssim\sup_{a\in\D}\int_{\D}\frac{(1-|a|)^s}{|1-\overline{a}u|^{s+1}}
  |f_j(u)|^q\frac{\widehat{\omega}(u)^{\frac{q}{p}}}{(1-|u|)^{\frac{q}{p}}}G_{\mu,r}(u)^q(1-|u|)^{\frac{q}{p}-1}dA(u)\\
&\leq\sup_{a\in\D}\left(\int_{\rho\D}\frac{(1-|a|)^s}{|1-\overline{a}u|^{s+1}}|f_j(u)|^p
  \frac{\widehat{\omega}(u)}{1-|u|}dA(u)\right)^{\frac{q}{p}}
  \left\|G_{\mu,r}\right\|^q_{T^{\infty}_{\frac{pq}{p-q},-1}}\\
&\quad +\|f_j\|^q_{AT^{\infty}_p(\omega)}\sup_{a\in\D}\left(\int_{\D\setminus\rho\D}
  \frac{(1-|a|)^s}{|1-\overline{a}u|^{s+1}}G_{\mu,r}(u)^{\frac{pq}{p-q}}\frac{dA(u)}{1-|u|}\right)^{\frac{p-q}{p}}\\
&\lesssim\epsilon^q.
\end{align*}
Therefore, (a) holds. In this case, the compactness of $I_d:AT^0_p(\omega)\to T^0_q(\mu)$ follows from (a) and Theorem \ref{bdd-em-2}. The proof is complete.
\end{proof}

\section{Littlewood--Paley formulas and integration operators}\label{LP-Volterra}

In this section, we are going to prove Theorems \ref{Littlewood--Paley} and \ref{bounded-Volterra}.

\begin{proof}[Proof of Theorem \ref{Littlewood--Paley}]
Suppose first that (b) holds, i.e. $\omega\in\DDD$. We are going to establish (a). Fix $a\in\D$ and let $\gamma>\big(m+1+\gamma_0(\omega)\big)\max\{1,1/q\}$ be large enough. Assume that $f\in AT^{\infty}_q(\omega)$. Then the reproducing formula
$$f(z)=(\gamma+1)\int_{\D}\frac{(1-|u|^2)^{\gamma}}{(1-\overline{u}z)^{2+\gamma}}f(u)dA(u)$$
gives that
\begin{equation}\label{m-deri}
|f^{(m)}(z)|\lesssim\int_{\D}\frac{(1-|u|)^\gamma}{|1-\overline{u}z|^{2+m+\gamma}}|f(u)|dA(u).
\end{equation}
In the case $q>1$, we can use H\"older's inequality and Lemma \ref{FR} to obtain that
\begin{align*}
|f^{(m)}(z)|^q
&\lesssim\left(\int_{\D}\frac{(1-|u|)^{\gamma}dA(u)}{|1-\overline{u}z|^{2+m+\gamma}}\right)^{\frac{q}{q'}}
  \int_{\D}\frac{(1-|u|)^{\gamma}}{|1-\overline{u}z|^{2+m+\gamma}}|f(u)|^qdA(u)\\
&\lesssim(1-|z|)^{-m\frac{q}{q'}}\int_{\D}\frac{(1-|u|)^{\gamma}}{|1-\overline{u}z|^{2+m+\gamma}}|f(u)|^qdA(u).
\end{align*}
Consequently, for sufficiently small $s>0$, Fubini's theorem together with Lemmas \ref{gFR}(2) and \ref{hat-equivalent} yields that
\begin{align*}
&\int_{\D}\frac{(1-|a|)^s}{|1-\overline{a}z|^{s+1}}|f^{(m)}(z)|^q(1-|z|)^{mq}\omega(z)dA(z)\\
&\ \lesssim\int_{\D}\frac{(1-|a|)^s}{|1-\overline{a}z|^{s+1}}\int_{\D}
  \frac{(1-|u|)^{\gamma}}{|1-\overline{u}z|^{2+m+\gamma}}|f(u)|^qdA(u)(1-|z|)^m\omega(z)dA(z)\\
&\ =\int_{\D}(1-|a|)^s(1-|u|)^{\gamma}|f(u)|^q\int_{\D}\frac{(1-|z|)^m\omega(z)dA(z)}
  {|1-\overline{a}z|^{s+1}|1-\overline{u}z|^{2+m+\gamma}}dA(u)\\
&\ \lesssim\int_{\D}\frac{(1-|a|)^s}{|1-\overline{a}u|^{s+1}}|f(u)|^q\frac{\widehat{\omega}(u)}{1-|u|}dA(u)\\
&\ \asymp\int_{\D}\frac{(1-|a|)^s}{|1-\overline{a}u|^{s+1}}|f(u)|^q\omega(u)dA(u).
\end{align*}
In the case $0<q\leq1$, the estimate \eqref{m-deri} together with \cite[Proposition 4.17]{Zh07} gives that
$$|f^{(m)}(z)|^q\lesssim\int_{\D}\frac{(1-|u|)^{q(2+\gamma)-2}}{|1-\overline{u}z|^{q(2+m+\gamma)}}|f(u)|^qdA(u).$$
Then, for sufficiently small $s>0$, by Fubini's theorem, Lemmas \ref{gFR}(2) and \ref{hat-equivalent},
\begin{align*}
&\int_{\D}\frac{(1-|a|)^s}{|1-\overline{a}z|^{s+1}}|f^{(m)}(z)|^q(1-|z|)^{mq}\omega(z)dA(z)\\
&\lesssim\int_{\D}(1-|a|)^s(1-|u|)^{q(2+\gamma)-2}|f(u)|^q\int_{\D}\frac{(1-|z|)^{mq}\omega(z)dA(z)}
  {|1-\overline{a}z|^{s+1}|1-\overline{u}z|^{q(2+m+\gamma)}}dA(u)\\
&\lesssim\int_{\D}\frac{(1-|a|)^s}{|1-\overline{a}u|^{s+1}}|f(u)|^q\frac{\widehat{\omega}(u)}{1-|u|}dA(u)\\
&\asymp\int_{\D}\frac{(1-|a|)^s}{|1-\overline{a}u|^{s+1}}|f(u)|^q\omega(u)dA(u).
\end{align*}
Since $a\in\D$ is arbitrary, we conclude that in both cases, the function $f^{(m)}(\cdot)(1-|\cdot|)^m$ belongs to $T^{\infty}_q(\omega)$. Moreover,
$$\left\|f^{(m)}(\cdot)(1-|\cdot|)^m\right\|_{T^{\infty}_q(\omega)}\lesssim\|f\|_{AT^{\infty}_q(\omega)}.$$
Combining this with Lemma \ref{thm:growth-estimate-unit-disk}, we obtain that
$$\sum_{j=0}^{m-1}\left|f^{(j)}(0)\right|+\left\|f^{(m)}(\cdot)(1-|\cdot|)^m\right\|_{T^{\infty}_q(\omega)}
\lesssim\|f\|_{AT^{\infty}_q(\omega)}.$$

Assume now that $f$ is an analytic function on $\D$ such that $f^{(m)}(\cdot)(1-|\cdot|)^m$ belongs to $T^{\infty}_q(\omega)$. Define
$$g(z)=f(z)-\sum_{j=0}^{2m}\frac{f^{(j)}(0)}{j!}z^j.$$
It is clear that any bounded function belongs to $T^{\infty}_q(\omega)$, and by Lemma \ref{thm:growth-estimate-unit-disk},
\begin{equation}\label{j-deritive-0}
|f^{(j)}(0)|\lesssim\left\|f^{(m)}(\cdot)(1-|\cdot|)^m\right\|_{T^{\infty}_q(\omega)},\quad m\leq j\leq2m.
\end{equation}
Then we have $g^{(m)}(\cdot)(1-|\cdot|)^m$ is in $T^{\infty}_q(\omega)$, and
\begin{align*}
\left\|g^{(m)}(\cdot)(1-|\cdot|)^m\right\|_{T^{\infty}_q(\omega)}
&\lesssim\left\|f^{(m)}(\cdot)(1-|\cdot|)^m\right\|_{T^{\infty}_q(\omega)}+
  \sum_{j=m}^{2m}\left|f^{(j)}(0)\right|\\
&\lesssim\left\|f^{(m)}(\cdot)(1-|\cdot|)^m\right\|_{T^{\infty}_q(\omega)}.
\end{align*}
Since
$$g(0)=g'(0)=\cdots=g^{(2m)}(0)=0,$$
we may apply \cite[Proposition 4.27]{Zh07} to obtain that
$$g(z)=\frac{1}{(\gamma+2)\cdots(\gamma+m)}\int_{\D}\frac{(1-|u|^2)^{m+\gamma}}{\overline{u}^m(1-\overline{u}z)^{2+\gamma}}g^{(m)}(u)dA(u),\quad z\in\D.$$
Noting that the function $u\mapsto u^{-m}g^{(m)}(u)$ is analytic on $\D$, the above representation together with \cite[Lemma 4.26]{Zh07} gives that
\begin{equation}\label{m-inte}
|g(z)|\lesssim\int_{\D}\frac{(1-|u|)^{m+\gamma}}{|1-\overline{u}z|^{2+\gamma}}\left|g^{(m)}(u)\right|dA(u).
\end{equation}
In the case $q>1$, by H\"older's inequality and Lemma \ref{FR}, we have for sufficiently small $\epsilon>0$,
\begin{align*}
|g(z)|^q
&\lesssim\left(\int_{\D}\frac{(1-|u|)^{\gamma-\epsilon q'}dA(u)}{|1-\overline{u}z|^{2+\gamma}}\right)^{\frac{q}{q'}}
  \int_{\D}\frac{(1-|u|)^{mq+\gamma+\epsilon q}}{|1-\overline{u}z|^{2+\gamma}}\left|g^{(m)}(u)\right|^qdA(u)\\
&\lesssim(1-|z|)^{-\epsilon q}
  \int_{\D}\frac{(1-|u|)^{mq+\gamma+\epsilon q}}{|1-\overline{u}z|^{2+\gamma}}\left|g^{(m)}(u)\right|^qdA(u).
\end{align*}
Then for sufficinetly small $s>0$, by Fubini's theorem, Lemmas \ref{gFR}(2) and \ref{hat-equivalent},
\begin{align*}
&\int_{\D}\frac{(1-|a|)^s}{|1-\overline{a}z|^{s+1}}|g(z)|^q\omega(z)dA(z)\\
&\lesssim\int_{\D}(1-|a|)^s(1-|u|)^{mq+\gamma+\epsilon q}\left|g^{(m)}(u)\right|^q
  \int_{\D}\frac{(1-|z|)^{-\epsilon q}\omega(z)dA(z)}{|1-\overline{a}z|^{s+1}|1-\overline{u}z|^{2+\gamma}}dA(u)\\
&\lesssim\int_{\D}\frac{(1-|a|)^s}{|1-\overline{a}u|^{s+1}}\left|g^{(m)}(u)\right|^q(1-|u|)^{mq-1}
  \widehat{\omega}(u)dA(u)\\
&\asymp\int_{\D}\frac{(1-|a|)^s}{|1-\overline{a}u|^{s+1}}\left|g^{(m)}(u)\right|^q(1-|u|)^{mq}
  \omega(u)dA(u)\\
&\lesssim\left\|g^{(m)}(\cdot)(1-|\cdot|)^m\right\|^q_{T^{\infty}_q(\omega)}\lesssim\left\|f^{(m)}(\cdot)(1-|\cdot|)^m\right\|^q_{T^{\infty}_q(\omega)}.
\end{align*}
In the case $0<q<1$, by \eqref{m-inte} and \cite[Proposition 4.17]{Zh07},
$$|g(z)|^q\lesssim\int_{\D}\frac{(1-|u|)^{(2+\gamma+m)q-2}}{|1-\overline{u}z|^{(2+\gamma)q}}
\left|g^{(m)}(u)\right|^qdA(u),$$
and similarly as before, for sufficiently small $s>0$,
\begin{align*}
&\int_{\D}\frac{(1-|a|)^s}{|1-\overline{a}z|^{s+1}}|g(z)|^q\omega(z)dA(z)\\
&\lesssim\int_{\D}(1-|a|)^s(1-|u|)^{(2+\gamma+m)q-2}\left|g^{(m)}(u)\right|^q
  \int_{\D}\frac{\omega(z)dA(z)}{|1-\overline{a}z|^{s+1}|1-\overline{u}z|^{(2+\gamma)q}}dA(u)\\
&\lesssim\int_{\D}\frac{(1-|a|)^s}{|1-\overline{a}u|^{s+1}}\left|g^{(m)}(u)\right|^q(1-|u|)^{mq-1}
  \widehat{\omega}(u)dA(u)\\
&\asymp\int_{\D}\frac{(1-|a|)^s}{|1-\overline{a}u|^{s+1}}\left|g^{(m)}(u)\right|^q(1-|u|)^{mq}
  \omega(u)dA(u)\\
&\lesssim\left\|g^{(m)}(\cdot)(1-|\cdot|)^m\right\|^q_{T^{\infty}_q(\omega)}\lesssim\left\|f^{(m)}(\cdot)(1-|\cdot|)^m\right\|^q_{T^{\infty}_q(\omega)}.
\end{align*}
Consequently, in both cases, $g\in AT^{\infty}_q(\omega)$, and
$$\|g\|_{AT^{\infty}_{q}(\omega)}\lesssim\left\|f^{(m)}(\cdot)(1-|\cdot|)^m\right\|_{T^{\infty}_q(\omega)}.$$
Therefore, by the definition of $g$ and \eqref{j-deritive-0}, we obtain that $f\in AT^{\infty}_q(\omega)$, and
\begin{align*}
\|f\|_{AT^{\infty}_q(\omega)}
&=\left\|g(\cdot)+\sum_{j=0}^{2m}\frac{f^{(j)}(0)}{j!}(\cdot)^j\right\|_{AT^{\infty}_q(\omega)}\lesssim\|g\|_{AT^{\infty}_q(\omega)}+\sum_{j=0}^{2m}\left|f^{(j)}(0)\right|\\
&\lesssim\sum_{j=0}^{m-1}\left|f^{(j)}(0)\right|+\left\|f^{(m)}(\cdot)(1-|\cdot|)^m\right\|_{T^{\infty}_q(\omega)}.
\end{align*}

Suppose next that (a) holds. We are going to show $\omega \in \DDD$. For $x>0$, write
$$\omega_x:=\int_0^1r^x\omega(r)dr.$$
Consider the test functions $f_n(z)=z^n$ for $n\in\mathbb{N}$. An elementary computation gives that
$$\|f_n\|_{AT_q^\infty(\omega)}^q \asymp \omega_{nq+1},$$
and
$$\left\|f'_n(\cdot)(1-|\cdot|)\right\|^q_{T^{\infty}_q(\omega)}\asymp
n^q\int_0^1r^{(n-1)q+1}(1-r)^q\omega(r)dr,$$
where the implicit constants are both independent of $n$. Then by the condition (a), for any $n\in\mathbb{N}$,
$$n^q\int_0^1r^{(n-1)q+1}(1-r)^q\omega(r)dr\asymp\omega_{nq+1}.$$
It is clear that $\omega_{nq+1}\leq\omega_{(n-1)q+1}$, we obtain that there exists $C>0$ such that
$$n^q\int_0^1r^{(n-1)q+1}(1-r)^q\omega(r)dr\leq C\omega_{(n-1)q+1},\quad \forall n\in\mathbb{N},$$
which gives $\omega\in\DD$ (see the proof of \cite[Theorem 6]{PR21}). Consequently,
$$\omega_{(n-1)q+1}\asymp\omega_{nq+1}\asymp n^q\int_0^1r^{(n-1)q+1}(1-r)^q\omega(r)dr$$
holds for any $n\in\mathbb{N}$ with implicit constants independent of $n$, which in turn implies that $\omega\in\DDD$ (see the proof of \cite[Theorem 5]{PR21}). The general case $m\in \N$ readily follows from the argument above. The proof is finished.
\end{proof}

The following derivative characterization of the small spaces $AT^0_q(\omega)$ is immediate by the previous proof.

\begin{corollary}\label{LP0}
Let $0<q<\infty$, $\omega\in\DDD$, $m\in\mathbb{N}$, and let $f$ be an analytic function on $\D$. Then $f\in AT^0_q(\omega)$ if and only if the function $f^{(m)}(\cdot)(1-|\cdot|)^m$ belongs to $T^0_q(\omega)$.
\end{corollary}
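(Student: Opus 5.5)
The plan is to localize the two pointwise-to-integral estimates from the proof of Theorem \ref{Littlewood--Paley}. Both directions there have the same shape: for a fixed $a\in\D$ and small $s>0$ one has
$$\int_{\D}\frac{(1-|a|)^s}{|1-\overline{a}z|^{s+1}}|F(z)|^q\omega(z)\,dA(z)\lesssim\int_{\D}(1-|a|)^s\,|H(u)|^q\,K(u)\,\Phi(a,u)\,dA(u),$$
where $\Phi(a,u)$ is an inner $\omega$-integral. Lemma \ref{gFR}(2) bounds $\Phi(a,u)$ by $|1-\overline{a}u|^{-s-1}$ times the appropriate weight. We only need to replace the supremum over $a$ by a limit as $|a|\to1^-$. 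Throughout, we use the criterion that $F\in T^0_q(\omega)$ if and only if $|F|^q\omega\,dA$ is a vanishing Carleson measure, which is equivalent to the kernel integral above tending to $0$ as $|a|\to1^-$.

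\emph{Necessity.} Let $f\in AT^0_q(\omega)$ and put $F=f^{(m)}(1-|\cdot|)^m$. The chain of inequalities from the proof gives, for every $a$,
$$\int_{\D}\frac{(1-|a|)^s}{|1-\overline{a}z|^{s+1}}|F|^q\omega\,dA\lesssim\int_{\D}\frac{(1-|a|)^s}{|1-\overline{a}u|^{s+1}}|f(u)|^q\frac{\widehat{\omega}(u)}{1-|u|}\,dA(u).$$
The constants do not depend on $a$, and the treatment is the same for $q>1$ and $q\le1$. The right-hand side is comparable to the corresponding integral with $\omega(u)\,dA(u)$. We cannot apply Lemma \ref{hat-equivalent} directly, since it is stated only for $\int_{\D}$ of $|f|^p$ times a power. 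Instead, replace $|f(u)|^q$ by $|f(u)|^q/|1-\overline{a}u|^{s+1}$, i.e.\ apply the lemma to the analytic function $f(u)(1-\overline{a}u)^{-(s+1)/q}$ with $\alpha=0$. The constants in Lemma \ref{hat-equivalent} depend only on $p,\alpha,\omega$, so they are uniform in $a$. Hence the right-hand side tends to $0$ as $|a|\to1^-$, because $|f|^q\omega\,dA$ is vanishing Carleson. Therefore $F\in T^0_q(\omega)$.

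\emph{Sufficiency.} Suppose $F=f^{(m)}(1-|\cdot|)^m\in T^0_q(\omega)$. Define $g$ as in the proof, subtracting the Taylor polynomial of degree $2m$. Then $g^{(m)}$ differs from $f^{(m)}$ by a polynomial $P$. The function $P(\cdot)(1-|\cdot|)^m$ lies in $T^0_q(\omega)$, since bounded functions times $(1-|\cdot|)^m$ with $m\ge1$ give a vanishing Carleson measure: $\int_{S(a)}(1-|z|)^{mq}\omega\,dA\le (1-|a|)^{mq}\omega(S(a))$, and $\omega(S(a))\lesssim(1-|a|)\widehat\omega(a)$. Hence $g^{(m)}(1-|\cdot|)^m\in T^0_q(\omega)$.

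The second chain of estimates in the proof gives, uniformly in $a$,
$$\int_{\D}\frac{(1-|a|)^s}{|1-\overline{a}z|^{s+1}}|g|^q\omega\,dA\lesssim\int_{\D}\frac{(1-|a|)^s}{|1-\overline{a}u|^{s+1}}|g^{(m)}(u)|^q(1-|u|)^{mq-1}\widehat{\omega}(u)\,dA(u).$$
By the same weighted use of Lemma \ref{hat-equivalent}, now with $\alpha=mq$ and the analytic function $g^{(m)}(u)(1-\overline{a}u)^{-(s+1)/q}$, the right-hand side is comparable to the $\omega$-version. That tends to $0$ as $|a|\to1^-$. So $g\in AT^0_q(\omega)$, and $f=g+\text{polynomial}\in AT^0_q(\omega)$, since polynomials are bounded and bounded functions lie in $AT^0_q(\omega)$ (as in the proof of Theorem \ref{bdd-em-1}).

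The main point needing care is exactly this uniformity in $a$ in the passage between $\widehat\omega(u)/(1-|u|)$ and $\omega(u)$, which is what lets the vanishing transfer. The proof of Theorem \ref{Littlewood--Paley} already used that step implicitly, so I expect it to go through as described.
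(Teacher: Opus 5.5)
Your proposal is correct and is precisely the argument the paper intends when it calls the corollary immediate from the proof of Theorem~\ref{Littlewood--Paley}: each estimate there holds for every fixed $a\in\D$ with constants independent of $a$, so you let $|a|\to1^-$ instead of taking the supremum. You also supply two details the paper leaves implicit: the $a$-uniform passage between $\widehat{\omega}(u)/(1-|u|)$ and $\omega(u)$, obtained by applying Lemma~\ref{hat-equivalent} to $f(u)(1-\overline{a}u)^{-(s+1)/q}$, and the check that subtracting the Taylor polynomial does not affect membership in $T^0_q(\omega)$ or $AT^0_q(\omega)$.
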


We are now in a position to prove Theorem \ref{bounded-Volterra}.

\begin{proof}[Proof of Theorem \ref{bounded-Volterra}]
Fix $s>0$. For any $f\in AT^{\infty}_p(\omega)$, we deduce from Theorem \ref{Littlewood--Paley} and Lemma \ref{hat-equivalent} that
\begin{align}\label{Jgf-norm}
\|J_gf\|^q_{AT^{\infty}_q(\omega)}
&\asymp\sup_{a\in\D}\int_{\D}\frac{(1-|a|)^s}{|1-\overline{a}z|^{s+1}}|f(z)g'(z)|^q(1-|z|)^q\omega(z)dA(z)\\
&\asymp\sup_{a\in\D}\int_{\D}\frac{(1-|a|)^s}{|1-\overline{a}z|^{s+1}}|f(z)g'(z)|^q
    (1-|z|)^{q-1}\widehat{\omega}(z)dA(z)\nonumber\\
&\asymp\|f\|^q_{T^{\infty}_q(\mu_g)},\nonumber
\end{align}
where
$$d\mu_g(z):=|g'(z)|^q(1-|z|)^{q-1}\widehat{\omega}(z)dA(z).$$
Consequently, $J_g:AT^{\infty}_p(\omega)\to AT^{\infty}_q(\omega)$ is bounded if and only if the embedding $I_d:AT^{\infty}_p(\omega)\to T^{\infty}_q(\mu_g)$ is bounded. Moreover,
\begin{equation}\label{JgId-equivalent}
\|J_g\|_{AT^{\infty}_p(\omega)\to AT^{\infty}_q(\omega)}
\asymp\|I_d\|_{AT^{\infty}_p(\omega)\to T^{\infty}_q(\mu_g)}.
\end{equation}
Similarly, by Corollary \ref{LP0}, $J_g:AT^{0}_p(\omega)\to AT^{0}_q(\omega)$ is bounded if and only if the embedding $I_d:AT^{0}_p(\omega)\to T^{0}_q(\mu_g)$ is bounded. Therefore, by Theorem \ref{bounded-embedding}, in both cases the conditions (a) and (b) are equivalent. Note that for any $r\in(0,1/4)$, \eqref{D-constant} together with the sub-harmonic property gives that
\begin{equation}\label{lower-bd-G}
G_{\mu_g,r}(z)=\frac{\mu_g(\Delta(z,r))^{\frac{1}{q}}}{\widehat{\omega}(z)^{\frac{1}{p}}(1-|z|)^{\frac{1}{q}}}
\gtrsim|g'(z)|(1-|z|)\widehat{\omega}(z)^{\frac{1}{q}-\frac{1}{p}}.
\end{equation}

(1) If (a) holds, then by \eqref{JgId-equivalent}, \eqref{lower-bd-G} and Theorem \ref{bounded-embedding}, we obtain that (c) holds, and
$$\|J_g\|_{AT^{\infty}_p(\omega)\to AT^{\infty}_q(\omega)}\gtrsim\sup_{z\in\D}|g'(z)|(1-|z|)\widehat{\omega}(z)^{\frac{1}{q}-\frac{1}{p}}.$$
Conversely, if (c) holds, then for any $z\in\D$, by \eqref{D-constant},
\begin{align*}
G_{\mu_g,r}(z)&=\frac{1}{\widehat{\omega}(z)^{\frac{1}{p}}(1-|z|)^{\frac{1}{q}}}
  \left(\int_{\Delta(z,r)}|g'(u)|^q(1-|u|)^{q-1}\widehat{\omega}(u)dA(u)\right)^{\frac{1}{q}}\\
&\leq\frac{\left(\int_{\Delta(z,r)}(1-|u|)^{-1}\widehat{\omega}(u)^{\frac{q}{p}}dA(u)\right)^{\frac{1}{q}}}  
  {\widehat{\omega}(z)^{\frac{1}{p}}(1-|z|)^{\frac{1}{q}}}
  \cdot\sup_{z\in\D}|g'(z)|(1-|z|)\widehat{\omega}(z)^{\frac{1}{q}-\frac{1}{p}}\\
&\lesssim\sup_{z\in\D}|g'(z)|(1-|z|)\widehat{\omega}(z)^{\frac{1}{q}-\frac{1}{p}}.
\end{align*}
Hence by Theorem \ref{bounded-embedding} and \eqref{JgId-equivalent}, we obtain that (a) holds, and
$$\|J_g\|_{AT^{\infty}_p(\omega)\to AT^{\infty}_q(\omega)}\lesssim\sup_{z\in\D}|g'(z)|(1-|z|)\widehat{\omega}(z)^{\frac{1}{q}-\frac{1}{p}}.$$

(2) If (a) holds, then by \eqref{JgId-equivalent}, \eqref{lower-bd-G} and Theorem \ref{bounded-embedding}, we obtain that the function $z\mapsto|g'(z)|(1-|z|)\widehat{\omega}(z)^{\frac{1}{q}-\frac{1}{p}}$ belongs to $T^{\infty}_{\frac{pq}{p-q},-1}$, which, due to Lemma \ref{hat-equivalent} and Theorem \ref{Littlewood--Paley}, is equivalent to $g\in AT^{\infty}_{\frac{pq}{p-q}}(\omega)$. Moreover,
\begin{align*}
\|J_g\|_{AT^{\infty}_p(\omega)\to AT^{\infty}_q(\omega)}
&\gtrsim\left\||g'(\cdot)|(1-|\cdot|)
    \widehat{\omega}(\cdot)^{\frac{1}{q}-\frac{1}{p}}\right\|_{T^{\infty}_{\frac{pq}{p-q},-1}}\\
&\asymp\|g-g(0)\|_{AT^{\infty}_{\frac{pq}{p-q}}(\omega)}.
\end{align*}
Conversely, if (c) holds, then for any $f\in AT^{\infty}_p(\omega)$, by \eqref{Jgf-norm}, H\"{o}lder's inequality and Theorem \ref{Littlewood--Paley},
\begin{align*}
\|J_gf\|^q_{AT^{\infty}_q(\omega)}
&\asymp\sup_{a\in\D}\int_{\D}\frac{(1-|a|)^s}{|1-\overline{a}z|^{s+1}}|f(z)g'(z)|^q(1-|z|)^q\omega(z)dA(z)\\
&\leq\left(\sup_{a\in\D}\int_{\D}\frac{(1-|a|)^s}{|1-\overline{a}z|^{s+1}}|f(z)|^p\omega(z)
    dA(z)\right)^{\frac{q}{p}}\\
&\  \cdot\left(\sup_{a\in\D}\int_{\D}\frac{(1-|a|)^s}{|1-\overline{a}z|^{s+1}}
    |g'(z)|^{\frac{pq}{p-q}}(1-|z|)^{\frac{pq}{p-q}}\omega(z)dA(z)\right)^{\frac{p-q}{p}}\\
&\asymp\|f\|^q_{AT^{\infty}_{p}(\omega)}\cdot\|g-g(0)\|^q_{AT^{\infty}_{\frac{pq}{p-q}}(\omega)}.
\end{align*}
Therefore, $J_g:AT^{\infty}_p(\omega)\to AT^{\infty}_q(\omega)$ is bounded, and
$$\|J_g\|_{AT^{\infty}_p(\omega)\to AT^{\infty}_q(\omega)}
\lesssim\|g-g(0)\|_{AT^{\infty}_{\frac{pq}{p-q}}(\omega)}.$$
The proof is complete.
\end{proof}

Using the same method and Theorems \ref{em-cpt1}, \ref{em-cpt2}, we can also characterize the compactness of Volterra type integration operators $J_g$. The details are omitted.

\begin{theorem}
Let $0<p,q<\infty$, $\omega\in\DDD$, and let $g$ be an analytic function on $\D$.
\begin{enumerate}
    \item [(1)] If $p\leq q$, then the following conditions are equivalent:
      \begin{enumerate}
        \item [(a)] $J_g:AT^{\infty}_p(\omega)\to AT^{\infty}_q(\omega)$ is compact;
        \item [(b)] $J_g:AT^0_p(\omega)\to AT^0_q(\omega)$ is compact;
        \item [(c)] $\lim_{|z|\to1^-}\left|g'(z)\right|(1-|z|)\widehat{\omega}(z)^{\frac{1}{q}-\frac{1}{p}}=0$.
      \end{enumerate}
    \item [(2)] If $p>q$, then the following conditions are equivalent:
      \begin{enumerate}
          \item [(a)] $J_g:AT^{\infty}_p(\omega)\to AT^{\infty}_q(\omega)$ is compact;
          \item [(b)] $J_g:AT^0_p(\omega)\to AT^0_q(\omega)$ is compact;
          \item [(c)] $J_g:AT^{\infty}_p(\omega)\to AT^0_q(\omega)$ is bounded;
          \item [(d)] $g\in AT^0_{\frac{pq}{p-q}}(\omega)$.
      \end{enumerate}
\end{enumerate}
\end{theorem}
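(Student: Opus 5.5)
The plan is to mirror the proof of Theorem \ref{bounded-Volterra}, replacing the boundedness results by their compact counterparts, Theorems \ref{em-cpt1} and \ref{em-cpt2}. The starting point is the identity \eqref{Jgf-norm}, which comes from Theorem \ref{Littlewood--Paley} with $m=1$ and Lemma \ref{hat-equivalent}. Since $(J_gf)(0)=0$ and $(J_gf)'=fg'$, it gives $\|J_gf\|_{AT^{\infty}_q(\omega)}\asymp\|f\|_{T^{\infty}_q(\mu_g)}$ with $d\mu_g=|g'|^q(1-|z|)^{q-1}\widehat{\omega}\,dA$. By Corollary \ref{LP0}, $J_gf\in AT^0_q(\omega)$ if and only if $f\in T^0_q(\mu_g)$.

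Compactness transfers as follows. The map $f\mapsto J_gf$ from $AT^\infty_p(\omega)$ to $AT^\infty_q(\omega)$ is compact if and only if $I_d:AT^\infty_p(\omega)\to T^\infty_q(\mu_g)$ is compact. To see this, use the characterization of compactness by bounded sequences converging locally uniformly to $0$: Lemma \ref{cpt-ele} for $I_d$, and the analogous Montel-type statement for $J_g$, which follows from Lemma \ref{thm:growth-estimate-unit-disk}. If $f_j\to0$ locally uniformly, then so does $J_gf_j$. Moreover $\|J_gf_j-J_gf_k\|\asymp\|f_j-f_k\|_{T^\infty_q(\mu_g)}$, since the norm equivalence is applied to the analytic function $J_g(f_j-f_k)$. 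So Cauchy subsequences correspond, and compactness of the two operators is equivalent. The same reasoning applies to the little spaces and to boundedness into $AT^0_q(\omega)$. This yields the equivalences (a)$\Leftrightarrow$(b) in both parts, and (a)$\Leftrightarrow$(b)$\Leftrightarrow$(c) in part (2), from Theorems \ref{em-cpt1} and \ref{em-cpt2}.

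It remains to translate the conditions on $G_{\mu_g,r}$. In part (1) I need $G_{\mu_g,r}\in \mathrm{C}_0(\D)$ to be equivalent to $|g'(z)|(1-|z|)\widehat{\omega}(z)^{1/q-1/p}\to0$.
\begin{itemize}
\item The lower bound \eqref{lower-bd-G} gives one direction.
\item For the other, the computation in the proof of Theorem \ref{bounded-Volterra}(1) can be localized: take the supremum only over $u\in\Delta(z,r)$. Then $G_{\mu_g,r}(z)\lesssim\sup_{u\in\Delta(z,r)}|g'(u)|(1-|u|)\widehat{\omega}(u)^{1/q-1/p}$, which tends to $0$ as $|z|\to1$ because $|u|\to1$ uniformly on $\Delta(z,r)$.
\item Continuity of $G_{\mu_g,r}$ is clear, since $\mu_g$ is absolutely continuous with locally bounded density.
\end{itemize}

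In part (2) I need $G_{\mu_g,r}\in T^0_{\frac{pq}{p-q},-1}$ to be equivalent to $g\in AT^0_{\frac{pq}{p-q}}(\omega)$. This is the main obstacle. The lower bound \eqref{lower-bd-G} gives that $|g'|(1-|z|)\widehat{\omega}^{1/q-1/p}$ lies in $T^0_{\frac{pq}{p-q},-1}$. By Lemma \ref{hat-equivalent}, applied to the analytic function $g'$ with the Carleson kernel weight (as in the proof of Theorem \ref{bounded-Volterra}(2), now localized near the boundary), together with Corollary \ref{LP0}, this is equivalent to $g\in AT^0_{\frac{pq}{p-q}}(\omega)$.

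For the converse I would avoid estimating $G$ directly and instead prove (d)$\Rightarrow$(c) at the operator level. For $f\in AT^\infty_p(\omega)$, apply the Hölder estimate from the proof of Theorem \ref{bounded-Volterra}(2) to the localized quantity $\int \frac{(1-|a|)^s}{|1-\overline{a}z|^{s+1}}|fg'|^q(1-|z|)^q\omega\,dA$. The first factor is bounded by $\|f\|^q_{AT^\infty_p(\omega)}$. The second factor tends to $0$ as $|a|\to1$, by Corollary \ref{LP0} applied to $g\in AT^0_{\frac{pq}{p-q}}(\omega)$. Hence $J_gf\in AT^0_q(\omega)$, which is (c). The one delicate point is that the Carleson-kernel test with fixed $s$ characterizes vanishing Carleson measures, which is recorded in Section 2.
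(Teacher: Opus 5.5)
Your proposal is correct and follows essentially the route the paper indicates, although the paper omits the details: transfer $J_g$ to the embedding $I_d:AT^{\infty}_p(\omega)\to T^{\infty}_q(\mu_g)$ via \eqref{Jgf-norm} and Corollary \ref{LP0}, then invoke Theorems \ref{em-cpt1} and \ref{em-cpt2}. Your localized upper bound for $G_{\mu_g,r}$ in part (1) and your H\"older argument for (d)$\Rightarrow$(c) in part (2) are exactly the little-oh analogues of the corresponding steps in the proof of Theorem \ref{bounded-Volterra}.
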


\end{document}